\def\?[#1]{\textbf{[#1]}\marginpar{\Large{\textbf{??}}}}
\newtheorem{theo}{Theorem}
\newtheorem{prop}{Proposition}[section]
\newtheorem{lemm}[prop]{Lemma}
\newtheorem{corr}[prop]{Corollary}
\newtheorem{rem}{Remark}
\numberwithin{equation}{section}
\newcommand{\mc}{\mathcal}
\newcommand{\rr}{\mathbb{R}}
\newcommand{\nn}{\mathbb{N}}
\newcommand{\cc}{\mathbb{C}}
\newcommand{\hh}{\mathbb{H}}
\newcommand{\zz}{\mathbb{Z}}
\newcommand{\la}{\lambda}
\newcommand{\eps}{\epsilon}
\newcommand{\pl}{\partial}
\newcommand{\x}{\times}
\newcommand{\til}{\widetilde}
\newcommand{\bbar}{\overline}
\newcommand{\cjd}{\rangle}
\newcommand{\cjg}{\langle}
\newcommand{\ndemi}{\tfrac{n}{2}}
\DeclareMathOperator{\Op}{Op}
\DeclareMathOperator{\supp}{supp}
\DeclareMathOperator{\Tr}{Tr}
\newcommand{\nwc}{\newcommand}
\nwc{\ep}{\epsilon}
\nwc{\vareps}{\varepsilon}
\nwc{\Oph}{\operatorname{Op}_\hbar}
\nwc{\ra}{\rangle}
\nwc{\mf}{\mathbf} 
\nwc{\blds}{\boldsymbol} 
\nwc{\ml}{\mathcal} 
\nwc{\defeq}{\stackrel{\rm{def}}{=}}
\nwc{\cE}{\ml{E}}
\nwc{\cN}{\ml{N}}
\nwc{\cO}{\ml{O}}
\nwc{\cP}{\ml{P}}
\nwc{\cU}{\ml{U}}
\nwc{\cV}{\ml{V}}
\nwc{\cW}{\ml{W}}
\nwc{\tU}{\widetilde{U}}
\nwc{\IN}{\mathbb{N}}
\nwc{\IR}{\mathbb{R}}
\nwc{\IZ}{\mathbb{Z}}
\nwc{\IC}{\mathbb{C}}
\nwc{\IT}{\mathbb{T}}
\nwc{\IS}{\mathbb{S}}
\nwc{\tP}{\widetilde{P}}
\nwc{\tPi}{\widetilde{\Pi}}
\nwc{\tV}{\widetilde{V}}
\nwc{\rest}{\restriction}
\title[The Fried conjecture in small dimensions]{The Fried conjecture in small dimensions}
\author[N.V.~Dang]{Nguyen Viet Dang}
\address{Institut Camille Jordan (U.M.R. CNRS 5208), Universit\'e Claude Bernard Lyon 1, B\^atiment Braconnier, 43, boulevard du 11 novembre 1918, 
69622 Villeurbanne Cedex }
\email{dang@math.univ-lyon1.fr}
\author[C.~Guillarmou]{Colin Guillarmou}
\address{CNRS, Universit\'e Paris-Sud, D\'epartement de Math\'ematiques, 91400
Orsay, France}
\email{cguillar@math.cnrs.fr}
\author[G.~Rivi\`ere]{Gabriel Rivi\`ere}
\address{Laboratoire Paul Painlev\'e (U.M.R. CNRS 8524), D\'epartement de math\'ematiques, Facult\'e des sciences et technologies, Universit\'e de Lille , 59655 Villeneuve d'Ascq Cedex, France}
\email{gabriel.riviere@math.univ-lille1.fr}
\author[S.~Shen]{Shu Shen}
\address{Institut de math\'ematiques de Jussieu, Sorbonne Universit\'e, Case Courrier 247, 4 place Jussieu, 75252 Paris Cedex 05, France}
\email{shu.shen@imj-prg.fr}
\begin{document}
\begin{abstract}  
We study the twisted Ruelle zeta function $\zeta_X(s)$ for smooth Anosov vector fields $X$ acting on flat vector bundles over smooth compact manifolds. 
In dimension $3$, we prove the Fried conjecture, relating Reidemeister torsion and $\zeta_X(0)$. In higher dimensions, we show more generally that $\zeta_X(0)$  is locally constant with respect to the vector field $X$ under a spectral condition.
As a consequence, we also show the Fried conjecture for Anosov flows near the geodesic flow on the unit tangent bundle of hyperbolic $3$-manifolds. 
This gives the first examples of non-analytic Anosov flows and geodesic flows in 
variable negative curvature where the Fried conjecture holds true.
\end{abstract}

\maketitle

\section{Introduction}

Let $\mc{M}$ be a smooth ($\ml{C}^{\infty}$), compact, connected and 
oriented manifold of dimension $n$ and $E\rightarrow\ml{M}$ a smooth 
  Hermitian vector bundle with fibers $\cc^r$ equipped with a flat connection $\nabla$. 
Parallel transport via $\nabla$ induces a conjugacy class of representation $\rho: \pi_1(\mc{M})\to {\rm GL}(\cc^r)$, 
which is unitary as soon as $\nabla$ preserves $\cjg \cdot,\cdot\cjd_E$. One can then define a twisted de Rham complex on the space  $\Omega(\mc{M};E)$ of smooth twisted forms with twisted exterior derivative $d^\nabla$, 
and we denote by $H^k(\mc{M};\rho)$ its cohomology of degree $k$. We say that the complex (or $\rho$) is acyclic if $H^k(\mc{M};\rho)=0$ for each $k$. If 
$\rho$ is acyclic and unitary, Ray and Singer introduced a secondary invariant which is defined by the value at $0$ of the derivative of the spectral zeta function of the Laplacian~\cite{RaSi71}. 
They showed that this quantity $\tau_\rho(\mc{M})$ is in fact independent of the choice of the metric used to define the Laplacian, 
thus an invariant of the flat bundle. 
This is the so-called \emph{analytic torsion} and it was conjectured by Ray and Singer to be equal to the 
\emph{Reidemeister torsion}
~\cite{Re35, Fra, DR}. This conjecture was proved 
independently by Cheeger~\cite{Ch} and M\"uller~\cite{Mu} and it was 
extended to unimodular flat vector bundles by M\"uller~\cite{Muller2} 
and to arbitrary  flat vector bundles by Bismut and Zhang~\cite{BisZhang92}. For an introduction to the different notions of torsion, we refer the reader to~\cite{Mn}.


In the context of hyperbolic dynamical systems, Fried conjectured and proved in certain cases that 
the analytic torsion can in fact be related to the value at $0$ of a certain dynamical zeta function~\cite{F87} 
that we will now define. Given a (primitive) closed hyperbolic orbit $\gamma$ of a smooth vector field $X$, one can define its orientation index $\varepsilon_{\gamma}$ to be equal to $1$ when 
its unstable bundle $E_u(\gamma)$ is orientable and to $-1$ otherwise. If now $X$ is a smooth Anosov vector field on $\mc{M}$, we can define the \emph{Ruelle zeta function twisted} 
by the representation $\rho$ as~: 
\begin{equation}\label{Ruellezeta}
\zeta_{X,\rho}(\la):=\prod_{\gamma\in \mc{P}}\det(1-\varepsilon_{\gamma}\rho([\gamma])e^{-\la \ell(\gamma)}), \quad {\rm Re}(\la)>C
\end{equation}
where $\mc{P}$ denotes the set of primitive closed orbits of $X$ and $\ell(\gamma)$ the corresponding periods. 
Here $C>0$ is some large enough constant depending on $X$ and $\rho$. 
If $\rho$ is unitary and acyclic and if $X$ is the geodesic vector field on the unit tangent bundle $\ml{M}=SM$ of a hyperbolic manifold $M$, 
Fried showed that $\zeta_{X,\rho}(\la)$ extends meromorphically to $\la\in \cc$ using Selberg trace formula~\cite{FriedAnnENS} and the work or Ruelle~\cite{Rue}. 
Then he proved~\cite{FrInv} the remarkable formula (with $\dim(\ml{M})=2n_0+1$)~:
\begin{equation}\label{friedhyp}
|\zeta_{X,\rho}(0)^{(-1)^{n_0}}|=\tau_\rho(\ml{M}),
\end{equation}
where $\rho$ is the lift to $\pi_1(\ml{M})$ of an acyclic and unitary representation $\rho_0:\pi_1(M)\rightarrow U(\cc^r)$. Fried 
interpreted this formula as an analogue of the Lefschetz fixed point formula answering his own question in the case of geodesic flows~\cite[p.~441]{Fr0}~: 
\emph{is there a general connection between the analytic torsion of Ray and Singer and closed orbits of some flow (e.g. geodesic flow) ?}
 He then extended this formula~\cite{F87, Friedanalytic} 
 to various families of flows including Morse-Smale flows and conjectured in \cite[p. 66]{F87} that formula~\eqref{friedhyp} holds for compact locally homogeneous Riemannian spaces and acyclic bundles. This conjecture was proved for
non-positively curved locally symmetric spaces by Moscovici-Stanton \cite{MoSt} and Shen \cite{Sh}.
As for generalisations of the above results, Fried makes the following comment in~\cite[p.~66]{F87}: 
\emph{it is even conceivable that $(\varphi_t,E)$ is Lefschetz for any acyclic $E$ with a flat density and any $C^\omega$ contact flow $\varphi_t$.} 
In his 1995 article \cite[p.~181]{Friedanalytic}, Fried  conjectured that  the relation \eqref{friedhyp} holds for geodesic flows with variable negative curvature, an important case that can be seen as a part of a body of conjectures considered by Fried in special cases. 
As stated by Zworski~\cite[p.~5]{ZwBAMS} in the survey article: \emph{in the case of smooth manifolds of variable negative curvature, (\ref{friedhyp}) remains completely
open}.

 For analytic Anosov flows, generalizing earlier works of Ruelle~\cite{Rue},
Rugh showed in \cite{Rughanalytic} that $\zeta_{X,\rho}$ has meromorphic 
continuation to the whole complex plane when $\text{dim}(\ml{M})=3$. This was later extended to higher dimensions by Fried~\cite{Friedanalytic}. 
Then, Sanchez-Morgado \cite{Morg93,Morg96} 
proved that~\eqref{friedhyp} holds for
transitive analytic Anosov flows 
in dimension $3$ if there exists a closed orbit $\gamma$ such that, for each $j\in\{0,1\}$, $\ker( \rho([\gamma])-\varepsilon_{\gamma}^j{\rm Id})=0$ -- see also~\cite{F87} for related assumptions in 
the case of Morse-Smale flows. More recently, the meromorphic continuation of Ruelle zeta functions was proved in the case of hyperbolic dynamical systems 
with less regularity (say $\ml{C}^{\infty}$). The case of Anosov diffeomorphisms was handled by Liverani~\cite{Li05} while the case of Axiom A diffeomorphisms was treated by Kitaev~\cite{Ki} and Baladi-Tsujii~\cite{BaTs08}. Afterwards, Giulietti, Liverani and Pollicott proved that the meromorphic continuation of $\zeta_{X,\rho}$ holds for \emph{smooth} Anosov flows~\cite{GLP} --  see Remark~\ref{r:zeta-extension} below for the extension to general representations of the fundamental group. An alternative proof of this latter fact was given by Dyatlov-Zworski~\cite{DyZw13} via microlocal techniques, and extended by Dyatlov-Guillarmou \cite{DyGu1,DyGu2} to Axiom A cases. 
In the case of smooth \emph{contact} Anosov vector fields in dimension $3$ and of 
the trivial representation $1:[\gamma]\in\pi_1(\mc{M})\to 1\in\cc^*$, Dyatlov-Zworski \cite{DyZw} subsequently proved that the vanishing order of 
$\zeta_{X,1}(\lambda)$ at $0$ is 
$\lambda^{b_1(\ml{M})-2}$~\cite{DyZw} where $b_1(\ml{M})$ is the first Betti number of $\ml{M}$ -- see also~\cite{Ha18} in the case with boundary. 
Recent account about these advances can be found in~\cite{Go15, ZwBAMS}. We also refer to the 
book of Baladi~\cite{Ba} for a complete introduction to the spectral analysis of zeta functions in the case of diffeomorphisms. 
Building on these recent results in the smooth case, the purpose of this work is to bring new insights on Fried's questions regarding the links between Ruelle zeta functions and analytic torsion.

 
 \section{Statement of the main results}

Our first result answers Fried's question in dimension $3$ for smooth Anosov flows.

\begin{theo}\label{t:maintheo2} 
Suppose that $\operatorname{dim}(\ml{M})=3$ and let $E$ be a smooth Hermitian vector bundle with a flat connection $\nabla$ inducing a unitary and acyclic 
representation $\rho:\pi_1(\mc{M})\to U(\cc^r)$.
Let $X_0$ be a smooth Anosov vector field preserving a smooth volume form.
Then, there is a nonempty neighborhood $\mc{U}(X_0)\subset C^\infty(\mc{M};T\mc{M})$ of $X_0$ so that 
\[ \forall X\in \mc{U}(X_0), \quad \zeta_{X,\rho}(0)=\zeta_{X_0,\rho}(0)\neq 0.\]
In addition, if $b_1(\ml{M})\not=0$ or if there exists a closed orbit $\gamma$ of $X_0$ such that, for each 
$j\in\{0,1\}$, $\ker( \rho([\gamma])-\varepsilon_{\gamma}^j{\rm Id})=0$, then $\vert\zeta_{X,\rho}(0)\vert^{-1}=\tau_\rho(\mc{M})$ is the Reidemeister torsion for each $X\in \mc{U}(X_0)$.
\end{theo}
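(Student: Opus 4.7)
The plan is to combine three ingredients: a local constancy statement for the map $X\mapsto \zeta_{X,\rho}(0)$ on a $C^\infty$-neighborhood of $X_0$, a nonvanishing result at $X_0$ coming from acyclicity, and an identification of $|\zeta_{X,\rho}(0)|$ with the Reidemeister torsion transferred from nearby real-analytic flows via Sanchez-Morgado's theorem. Following Dyatlov--Zworski, the starting point is the meromorphic factorization
\[ \zeta_{X,\rho}(\la) \;=\; \prod_{k=0}^{n-1} \det\!\bigl(\la + \mc{L}_X \,\big|\, \ml{C}^{-\infty}_{E_u^*}(\mc{M};\Omega_k\otimes E)\bigr)^{(-1)^k}, \]
where $\Omega_k\subset \Lambda^k T^*\mc{M}$ is the subbundle of $k$-forms annihilated by $\iota_X$, with ranks $1,2,1$ for $k=0,1,2$ in dimension $3$. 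The order of $\zeta_{X,\rho}$ at $0$ then equals the alternating sum of the generalized-kernel dimensions $m_k(X)$ of $\mc{L}_X$ on each such space.

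First I would invoke the paper's general local-constancy result, valid in any dimension under a spectral condition on the $0$-resonance, and verify that this condition is automatically fulfilled in dimension $3$ when $X_0$ preserves a smooth volume form and $\rho$ is unitary. The one-dimensionality of $E_s$ and $E_u$, together with the volume form and the unitarity of $\rho$, provides a nondegenerate pairing between resonant states of $\mc{L}_X$ and those of $\mc{L}_{-X}$, strongly constraining the Jordan structure at $0$ and in particular yielding semisimplicity. Next, acyclicity of $\rho$ allows one to identify the generalized resonant states with twisted de Rham cohomology groups (modulo dynamical contributions that cancel in the alternating sum), so that $\sum_k (-1)^k m_k(X_0)=0$ and $\zeta_{X,\rho}(0)=\zeta_{X_0,\rho}(0)$ is finite and nonzero throughout $\mc{U}(X_0)$. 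Finally, to upgrade this to the Fried identity, I would approximate $X_0$ in $\mc{U}(X_0)$ by a real-analytic Anosov field $X_1$ satisfying Sanchez-Morgado's spectral hypothesis on some closed orbit: his spectral criterion and the Anosov property are $C^\infty$-open (using structural stability of closed hyperbolic orbits and continuity of $\rho([\gamma])$), while analytic approximation is provided by a Grauert-type theorem. Sanchez-Morgado's theorem \cite{Morg93,Morg96} yields $|\zeta_{X_1,\rho}(0)|^{-1}=\tau_\rho(\mc{M})$, and combining this with the local constancy just established and the topological invariance of $\tau_\rho(\mc{M})$ propagates the equality to every $X\in\mc{U}(X_0)$. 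Under the alternative hypothesis $b_1(\mc{M})\neq 0$, an Anosov closing-lemma argument should produce closed orbits realizing nontrivial homology classes, among which one may select a $\gamma$ verifying Sanchez-Morgado's two-fold spectral condition.

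The main obstacle will be the local constancy. Proving semisimplicity of the $0$-resonance and controlling the spectral flow of Pollicott--Ruelle resonances through $0$ under $C^\infty$ perturbations requires a delicate microlocal and functional-analytic argument on the anisotropic Sobolev spaces supporting the resonant states, together with the explicit construction of pairings exploiting the preserved volume form and the low dimension. A secondary subtlety is the $b_1(\mc{M})\neq 0$ case, where one must ensure the selected homology-nontrivial orbit actually satisfies the two-fold spectral condition on $\rho([\gamma])$; this likely requires a genericity-type statement on the images $\rho([\gamma])$ as $\gamma$ ranges over a fixed homology class.
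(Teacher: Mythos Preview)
Your overall architecture matches the paper's, but there is a real gap in the step you flag as ``the main obstacle''. To apply the local-constancy theorem you need that $0$ is \emph{not} a Pollicott--Ruelle resonance of ${\bf X}_0$ on any form degree, i.e.\ $m_k(X_0)=0$ for each $k$. Your argument, as written, only gives the alternating sum $\sum_k(-1)^k m_k(X_0)=0$: semisimplicity plus the quasi-isomorphism of the resonant complex $(C^\bullet,d^\nabla)$ with twisted de Rham cohomology yields exactness of $(C^\bullet,d^\nabla)$ in the acyclic case, but an exact complex can have nonzero terms. The paper closes this gap in two steps. First, it shows directly that $C^0=0$: since $X_0$ preserves a volume and $\rho$ is unitary one has ${\bf X}_0^*=-{\bf X}_0$ on $L^2$, so any $u\in C^0$ with ${\bf X}_0u=0$ is smooth (by the Dyatlov--Zworski argument), and then lifting to the universal cover and using the Anosov contraction/expansion forces $d^\nabla u=0$, hence $u=0$ by acyclicity. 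Second, it proves a purely algebraic statement (Proposition~\ref{p:vanishing-kernel}) specific to dimension $3$: if $X$ preserves a volume and the de Rham complex is acyclic with $C^0=0$, then $C^k=0$ for all $k$. The proof uses that the Koszul complex $(C^\bullet,\iota_X)$ is \emph{also} acyclic, so that once $C^0=C^3=0$ (the latter via multiplication by the invariant volume), both $d^\nabla:C^1\to C^2$ and $\iota_X:C^2\to C^1$ are isomorphisms, forcing ${\bf X}_0=\iota_Xd^\nabla$ on $C^1$ to be simultaneously an isomorphism and nilpotent. Your pairing/semisimplicity route does not supply this.

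Your handling of the $b_1(\mc{M})\neq 0$ case also diverges from the paper and is only conjectural as stated. You propose to find, among closed orbits in a nontrivial homology class, one $\gamma$ for which $\rho([\gamma])$ avoids the eigenvalues $\varepsilon_\gamma^j$; this would require a genericity statement you do not supply. The paper instead \emph{perturbs the representation}: it takes a closed $1$-form $\alpha_0$ with $\int_{\gamma_0}\alpha_0\neq 0$ for some homologically nontrivial orbit $\gamma_0$ (whose existence is Plante's theorem), sets $\nabla_s=\nabla+is\alpha_0\wedge$, observes that for small $s\neq 0$ the monodromy $\rho_s([\gamma_0])=e^{is\int_{\gamma_0}\alpha_0}\rho([\gamma_0])$ automatically satisfies the Sanchez--Morgado condition, and then uses continuity of both $\zeta_{X_0,\rho}(0)$ and $\tau_\rho(\mc{M})$ in $\rho$ to pass to the limit $s\to 0$. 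This sidesteps any genericity issue entirely. The analytic-approximation part and the use of Sanchez--Morgado under the explicit holonomy hypothesis are as in the paper.
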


The second part of the Theorem is based on the approximation of smooth volume preserving Anosov flows by analytic transitive Anosov flows and it crucially uses the result of Sanchez-Morgado~\cite{Morg96} who proved the conjecture for transitive and analytic Anosov flows in the 3-dimensional case. This is also the reason of the holonomy assumption $\ker( \rho([\gamma])-\varepsilon_{\gamma}^j{\rm Id})=0$ for some 
$\gamma$, which was necessary in the argument of \cite{Morg96} as well as in other cases already treated by Fried, see e.g.~\cite[Th.~3.1, Th.~6.1]{F87}. When $b_1(\mc{M})\not=0$, this holonomy assumption can be removed by some approximation argument -- see p.~28.
Our proof of the first part of this Theorem \ref{t:maintheo2} is independent of these earlier works and it follows from a variation formula for $\zeta_{X,\rho}(0)$ with respect to $X$ which shows that $X\mapsto \zeta_{X,\rho}(0)$ 
is locally constant for unitary and acyclic representations in dimension $3$. Observe that a vector field in $\ml{U}(X_0)$ may not preserve a smooth volume form even if $X_0$ 
does. This variation property of the Ruelle zeta function at $0$ is in fact our main result and it holds more generally for smooth Anosov vector fields in any dimension 
under a certain non-resonance at $\la=0$ assumption. In order to state it, we need to recall the notion of \emph{Pollicott-Ruelle resonances}.

Given a vector field $X_0$ and connection $\nabla$, one can define the Lie derivative $\mathbf{X}_0:=d^{\nabla}\iota_{X_0}+\iota_{X_0}d^{\nabla}$ acting on smooth differential forms 
$\Omega(\ml{M};E)$. Then, one can find some $C>0$ depending on $X_0$ and $\rho$ such that 
$$R_{\mathbf{X}_0}(\lambda):=\int_0^{+\infty}e^{-t\lambda}e^{-t\mathbf{X}_0}dt:\Omega(\ml{M};E)\rightarrow\Omega^{\prime}(\ml{M},E)$$
is holomorphic for $\text{Re}(\lambda)>C$ where $\Omega^{\prime}(\ml{M};E)$ is the space of currents with values in $E$. 
For smooth Anosov flows, 
it was first proved by Butterley and Liverani that $R_{\mathbf{X}_0}(\lambda)$ has a meromorphic extension to the whole complex 
plane~\cite{BL07}. The 
poles of this meromorphic extension are called Pollicott-Ruelle resonances and this result was based on the construction of 
appropriate 
functional spaces for the differential operator $\mathbf{X}_0$ -- see also~\cite{BlKeLi02, GoLi08} in the case of diffeomorphisms and~\cite{L, GLP} for flows. Building on earlier works for diffeomorphisms~\cite{BaTs07, FRS}, 
Faure and Sj\"ostrand introduced microlocal methods to analyse the spectrum of Anosov flows~\cite{FaSj} and, among other things, they gave another proof of 
this result -- see also~\cite{Ts, DyZw13, FaTs}.
Using this meromorphic extension, our main result reads as
\begin{theo}\label{t:maintheo} Let $E$ be a smooth vector bundle with a flat connection $\nabla$. 
Then the set of smooth Anosov vector fields $X$ 
such that 
$0$ is not 
a pole of the meromorphic extension of
$R_{\mathbf{X}}(\lambda):\Omega(\ml{M};E)\rightarrow\Omega'(\ml{M};E)$
forms an open subset $\mathcal{U}\subset C^\infty(\ml{M},T\mc{M})$, and 
the map $X\in \mathcal{U} \longmapsto \zeta_{X,\rho}(0)$ is locally constant and nonzero. 
\end{theo}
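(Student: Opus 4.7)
The plan is to combine the perturbation theory of Pollicott--Ruelle resonances on anisotropic Sobolev spaces with a variation formula for $\log\zeta_{X,\rho}(0)$ obtained by differentiating a factorization of the Ruelle zeta function by form degree.

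\emph{Openness of $\mathcal{U}$.} The Anosov property is $C^1$--open, and for any $X_0\in\mathcal{U}$ the microlocal construction of Faure--Sj\"ostrand~\cite{FaSj} produces an anisotropic Sobolev space $\mathcal{H}_r(\mathcal{M};E)$ on which $\mathbf{X}_0+\lambda$ is Fredholm of index $0$ for $\operatorname{Re}\lambda>-r/C_0$. The escape function used to build $\mathcal{H}_r$ only depends on small conic neighborhoods of the stable and unstable cones of $X_0$, so the very same space accommodates every $X$ in a $C^\infty$--neighborhood $\mathcal{V}(X_0)$ of $X_0$, and $X\mapsto(\mathbf{X}+\lambda)^{-1}$ yields a continuous family of Fredholm resolvents on $\mathcal{H}_r$. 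Gohberg--Sigal theory then implies that the algebraic multiplicity of $\lambda=0$ as a pole of $R_{\mathbf{X}}$ is upper semicontinuous in $X\in\mathcal{V}(X_0)$; being zero at $X_0$, it remains zero on a (possibly smaller) $C^\infty$--neighborhood of $X_0$, which establishes the openness of $\mathcal{U}$.

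\emph{Factorization and variation at $\lambda=0$.} Extending the Dyatlov--Zworski approach~\cite{DyZw} to arbitrary flat bundles, the twisted Ruelle zeta admits a factorization
\[
\zeta_{X,\rho}(\lambda)\;=\;\prod_{k=0}^{n}d_{k,X}(\lambda)^{(-1)^{k+q_u}},
\]
where $q_u=\dim E_u$ and each $d_{k,X}$ is an entire function whose zeros are precisely the Pollicott--Ruelle resonances of $\mathbf{X}$ acting on $k$--forms, counted with algebraic multiplicity. Under the standing hypothesis, every $d_{k,X}(0)$ is nonzero, which gives $\zeta_{X,\rho}(0)\neq 0$. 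For a smooth curve $t\mapsto X_t\in\mathcal{U}$ with $Y=\partial_tX_t|_{t=0}$, Cartan's identity reads $\partial_t\mathbf{X}_t=d^\nabla\iota_Y+\iota_Y d^\nabla$; differentiating the factorization at $\lambda=0$, where each $d_{k,X_t}$ is invertible, and using the flat-trace formula for the logarithmic derivative of a Fredholm determinant, one obtains
\[
\frac{d}{dt}\bigg|_{t=0}\!\log\zeta_{X_t,\rho}(0)
\;=\;\sum_{k=0}^{n}(-1)^{k+q_u}\operatorname{tr}^\flat\!\Bigl((d^\nabla\iota_Y+\iota_Y d^\nabla)\,R_{\mathbf{X}_0}(0)\big|_{\Omega^k(\mathcal{M};E)}\Bigr).
\]
Since $(d^\nabla)^2=0$ yields $[d^\nabla,\mathbf{X}_0]=0$, and hence $[d^\nabla,R_{\mathbf{X}_0}(0)]=0$, the right-hand side equals the flat supertrace of the graded anticommutator $\{d^\nabla,\iota_Y R_{\mathbf{X}_0}(0)\}$, which vanishes by graded cyclicity of the flat supertrace; no boundary term appears as $d^\nabla$ vanishes on top-degree forms. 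Consequently $\zeta_{X_t,\rho}(0)$ is constant in $t$, proving local constancy on $\mathcal{U}$.

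\emph{Main obstacle.} The delicate point is the rigorous justification of the flat-trace manipulations: one must verify that the Schwartz kernels of $d^\nabla\iota_Y R_{\mathbf{X}_0}(0)$ and $\iota_Y d^\nabla R_{\mathbf{X}_0}(0)$ have wave-front sets disjoint from the conormal of the diagonal so that the flat supertrace is well defined, and that graded cyclicity applies in this distributional setting. Both issues are to be controlled through the radial source/sink estimates behind the Faure--Sj\"ostrand construction, combined with the facts that $\iota_Y$ is a bundle morphism and $d^\nabla$ is a first-order differential operator. A related technical input is the construction of the Fredholm determinants $d_{k,X}$ in the flat-bundle setting, extending \cite{DyZw}, with the correct location of zeros and a derivative formula consistent with the flat-trace computation above.
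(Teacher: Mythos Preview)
Your openness argument is essentially correct and matches the paper's Proposition~\ref{p:contofresolvent}. The variation argument, however, has a genuine gap that you yourself flag but do not resolve: the Schwartz kernel of $R_{\mathbf{X}_0}(0)$ has wave-front set meeting $N^*\Delta$ (coming from the short-time behaviour of $e^{-t\mathbf{X}_0}$ near $t=0$), and composing on the left with the first-order operator $d^\nabla\iota_Y+\iota_Y d^\nabla$ does nothing to move that singularity off the diagonal. Hence none of the quantities $\Tr^\flat\!\bigl((d^\nabla\iota_Y+\iota_Y d^\nabla)\,R_{\mathbf{X}_0}(0)\bigr)$, $\Tr^\flat(d^\nabla\iota_Y R)$, $\Tr^\flat(\iota_Y R\, d^\nabla)$ is defined, and the graded-cyclicity cancellation you invoke has no regularised meaning without substantial further work (cyclicity of the flat trace with an unbounded operator such as $d^\nabla$ is exactly the kind of identity that can fail and produce anomalous terms). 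A related issue is that the ``logarithmic-derivative'' formula you write for $\partial_t\log d_{k,X_t}(\lambda)$ is not established anywhere; in~\cite{DyZw13,DyZw} only $\partial_\lambda\log Z_k(\lambda)$ is expressed as a (time-shifted) flat trace, and there is no off-the-shelf analogue for variations in $X$.

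The paper proceeds by a different mechanism that avoids both problems. Instead of differentiating a determinant factorisation, it differentiates the periodic-orbit product for $\log\zeta_{X_\tau,\rho}(\lambda)$ directly in the region of absolute convergence: only the periods $\ell(\gamma_\tau)$ depend on $\tau$, and the chain rule applied to $e^{-\lambda\ell(\gamma_\tau)}$ produces an explicit \emph{prefactor $\lambda$} (Lemma~\ref{l:regularity-physical-region}, Corollary~\ref{corrolairevar}). A trick of rewriting $\partial_\tau\ell(\gamma_\tau)$ through a \emph{zeroth-order} bundle map $A_\tau^{(k)}$ (Lemma~\ref{l:shu-trick}) then turns the orbit sum into $\Tr^\flat\!\bigl(A_\tau^{(k)}e^{-t_0\mathbf{X}_\tau}(-\mathbf{X}_\tau-\lambda)^{-1}\bigr)$, where the propagator $e^{-t_0\mathbf{X}_\tau}$ shifts the wave-front set off $N^*\Delta$. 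The technical core (Theorem~\ref{continofFk}, Proposition~\ref{p:boundonWF}) is a uniform-in-$\tau$ wave-front bound allowing these well-defined flat traces to be continued analytically to $\lambda=0$; there the prefactor $\lambda$ kills the variation. So the vanishing is analytic---a factor of $\lambda$ arising from differentiating periods---rather than the algebraic supercommutator you propose.
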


This result is valid in any dimension and without any assumption on the fact that $\rho$ is unitary or that $X$ preserves 
some smooth volume form. Note from~\cite[Th.~2.1]{DaRi17c} that our condition on the poles of $R_{\mathbf{X}}(\lambda)$ implies that $\rho$ is acyclic. Yet, it is not clear if the converse is true even for a unitary representation and for a generic choice of vector field among Anosov vector fields. If we suppose in addition that $\ml{M}$ is $3$-dimensional, that $\rho$ is unitary and that $X$ preserves a smooth volume form, then we will show that the converse is indeed true and thus deduce the first part of Theorem~\ref{t:maintheo2}. This spectral assumption also implies that $\zeta_{X,\rho}(0)\neq 0$ as a consequence of~\cite{GLP, DyZw13} -- see e.g.~\cite[\S~3.1]{DyZw}. In the case of nonsingular Morse-Smale flows~\cite[Th.~3.1]{F87}, Fried proved that $\zeta_{X,\rho}(0)$ is equal to the Reidemeister torsion under certain assumptions on the eigenvalues of $\rho([\gamma])$ for every closed orbit. This geometric condition was in fact shown to be equivalent to the spectral condition we have here~\cite[\S~2.6]{DaRi17c}.

Observe now that Theorem~\ref{t:maintheo} 
says that the Ruelle zeta function evaluated at $\lambda=0$ is locally constant under a certain spectral assumption. This result 
suggests that this value should be an invariant of the acyclic representation class $[\rho]$ but it does not say a priori that it should be 
equal to the Reidemeister torsion. In dimension~$3$, this is indeed the case under the extra 
assumptions that $X_0$ preserves a smooth volume form and that $\rho$ is unitary as shown by Theorem~\ref{t:maintheo2}. For contact Anosov flows and unitary representation $\rho$, 
we prove that it is enough (in order to apply Theorem~\ref{t:maintheo}) to verify that $0$ is not 
a pole of the meromorphic extension of $R_{\mathbf{X}_0}(\lambda)$ restricted to $\Omega^{n_0}(\ml{M},E)$ where $\text{dim}(\ml{M})=2n_0+1$. 
For hyperbolic manifolds, using a factorisation of dynamical zeta functions associated to ${\bf X}$ in terms of infinite products of Selberg zeta functions associated to certain irreducible representations of ${\rm SO}(n_0)$, we can show that ${\bf X}$ has no $0$ resonance in the acyclic case when 
$\dim\mc{M}=5$ (see Proposition \ref{dim3}) and we  deduce the following extension of Fried conjecture ~\eqref{friedhyp}:
\begin{theo}\label{t:maintheo3} Suppose that $M=\Gamma\backslash 
	\hh^{3}$ is a compact oriented hyperbolic manifold of dimension $3$ and denote by $X_0$ the geodesic vector field on $\ml{M}=SM$. 
Let $E$ be a smooth Hermitian vector bundle with a flat connection $\nabla$ on $M$ inducing an acyclic and unitary 
representation $\rho: \pi_1(M)\to U(\cc^r)$.
Then, ${\bf X}_0$ has no resonance at $0$ and there exists a nonempty neighborhood $\ml{U}(X_0)\subset C^\infty(\mc{M};T\mc{M})$ of $X_0$ so that\footnote{Recall from~\cite{F87} that $\tau_{\rho}(M)^2=\tau_{\tilde{\rho}}(\ml{M})$.}
$$\forall X\in\ml{U}(X_0),\quad \zeta_{X,\tilde{\rho}}(0)=\tau_{\rho}(M)^2,$$
where $\tilde{\rho}$ is the lift of $\rho$ to $\ml{M}$.
\end{theo}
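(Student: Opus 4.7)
The plan combines three ingredients: a no-resonance statement for $\mathbf{X}_0$ at $\la=0$, the local constancy statement of Theorem~\ref{t:maintheo}, and Fried's classical evaluation~\eqref{friedhyp} together with the identity $\tau_{\tilde{\rho}}(\ml{M})=\tau_\rho(M)^2$ recalled in the footnote.

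First I would verify that the Lie derivative $\mathbf{X}_0$ associated with the geodesic flow of $\Gamma\backslash\hh^3$ has no Pollicott--Ruelle resonance at $\la=0$ on $\Omega(\ml{M};E)$. Since $\dim\ml{M}=5$, this is precisely the content of Proposition~\ref{dim3} referred to in the introduction. The strategy there is to decompose $\Omega(\ml{M};E)$ under the action of the isotropy group $M_0\simeq \mathrm{SO}(2)$ of the Cartan subgroup in $\PSL_2(\cc)$, so that the twisted Ruelle zeta function factors as an alternating product of Selberg-type zeta functions indexed by the irreducible characters of $M_0$ and by the form degree. The orders of vanishing (or poles) of each factor at $\la=0$ are then computable via Selberg's trace formula in terms of twisted $M_0$-isotypic cohomology, and acyclicity of $\rho$ is exactly what rules out a resonance of $\mathbf{X}_0$ at the origin.

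Once the no-resonance assertion is in place, $X_0$ belongs to the open set $\mathcal{U}$ of Theorem~\ref{t:maintheo}, which provides a neighborhood $\ml{U}(X_0)\subset C^\infty(\ml{M};T\ml{M})$ on which $X\mapsto \zeta_{X,\tilde{\rho}}(0)$ is constant and non-zero. It then suffices to identify this constant at $X=X_0$. For the geodesic flow on the unit tangent bundle of a hyperbolic $3$-manifold one has $n_0=2$, hence $(-1)^{n_0}=1$ in~\eqref{friedhyp}, so Fried's identity yields $|\zeta_{X_0,\tilde{\rho}}(0)|=\tau_{\tilde{\rho}}(\ml{M})=\tau_\rho(M)^2$. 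To promote this absolute-value equality to the signed identity claimed in the theorem, I would return to the Selberg factorisation of the first step: each Selberg factor evaluated at $\la=0$ is a positive real number, and the alternating product of exponents over form degrees leaves the sign unchanged because the spectral cancellations forced by acyclicity eliminate the factors that could contribute a sign.

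The main obstacle is Proposition~\ref{dim3}: the careful bookkeeping of orders of vanishing at $\la=0$ of each Selberg factor via the trace formula, together with the verification that each $M_0$-isotypic twisted cohomology vanishes when $\rho$ is acyclic, is the representation-theoretic heart of the argument and is specific to the rank-one structure of $\PSL_2(\cc)$ in dimension $5$. Once that input and the attendant positivity statement are granted, the conclusion is an immediate consequence of the local constancy provided by Theorem~\ref{t:maintheo}.
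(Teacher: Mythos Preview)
Your proposal is correct and follows essentially the same route as the paper: Proposition~\ref{dim3} gives that ${\bf X}_0$ has no resonance at $0$, Theorem~\ref{t:maintheo} then yields local constancy of $\zeta_{X,\tilde\rho}(0)$, and Fried's hyperbolic result~\cite{FrInv} identifies the value at $X_0$. (The paper's one-line proof cites Theorem~\ref{t:maintheo2}, but since $\dim\mc{M}=5$ your invocation of Theorem~\ref{t:maintheo} is the correct one; the paper's reference is presumably shorthand for ``the same mechanism''.)

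Two remarks on details. First, your sketch of Proposition~\ref{dim3} is in the right spirit but a bit loose: the paper does not work directly with $M_0$-isotypic cohomology but rather factorises each $Z_{{\bf X}^{(k)}}(\la)$ as an infinite product of Selberg zeta functions $Z_{S,\mu}(\la+c)$ indexed by irreducible ${\rm SO}(2)$-representations (Proposition~\ref{decompZ_X}), and then reads off the orders at $\la=0$ of the finitely many relevant factors from the Selberg trace formula of Bunke--Olbrich~\cite{BuOl} together with a Bochner-type estimate from~\cite{DFG}; the outcome is that each $m_k(0)$ is a combination of $\dim H^j(M;\rho)$, which vanish by acyclicity. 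Second, your positivity argument for removing the absolute value is unnecessary: Fried's original theorem in~\cite{FrInv} already establishes $\zeta_{X_0,\tilde\rho}(0)=\tau_\rho(M)^2$ for compact hyperbolic manifolds without any modulus, so no further sign analysis is needed. The paper's display~\eqref{friedhyp} carries an absolute value only because it is stated in a generality (locally symmetric spaces) broader than what is used here.
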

We emphasize that these are the first examples of geodesic flows with variable negative curvature where the Fried conjecture holds in dimension $n=2n_0+1>3$, except for locally symmetric spaces. In dimension $n_0>2$, the computations for the order of $0$ as a resonance of ${\bf X}_0$ on $S(\Gamma\backslash\hh^{n_0+1})$ are involved and do not always seem to be topological (cf Remark \ref{remhyp}). 

\subsection*{Organisation of the article} In section~\ref{s:preliminaries}, we describe in detail the dynamical 
framework and construct the escape function needed to build appropriate functional spaces.
In sections~\ref{s:ruelle-torsion} and~\ref{s:traceflat}, we describe the variation of the Ruelle zeta function for $\text{Re}(z)$ large.
In section~\ref{s:microlocal}, we show the analytic continuation
of our variation formula up to $z=0$ relying on the microlocal methods
of~\cite{FaSj,DyZw13}. 
In section~\ref{s:example}, we use the variation formula and methods of~\cite{Morg96, DFG, DyZw, DaRi17c} to discuss the Fried conjecture.
Finally, appendix~\ref{a:escape} gives technical details on the escape function and appendix~\ref{a:selberg} discusses Selberg's trace on symmetric tensors.

\subsection*{Conventions} For a smooth compact manifold $\mc{M}$, we will always use the following terminology: $T^*_0\mc{M}:=\{(x,\xi)\in T^*\mc{M};\xi\not=0\}$, $\mc{D}'(\mc{M})$ is the space of distributions, (i.e. the dual to the space of smooth functions, once a fixed smooth density has been chosen), $H^s(\mc{M}):=
(1+\Delta)^{-s/2}L^2(\mc{M})$ if $\Delta$ is the Laplacian of some fixed 
Riemannian metric on $\mc{M}$. 
If $B$ is a regularity space (such as $C^k,H^s,C^\infty,\mc{D}'$) and $E$ a smooth 
vector bundle on $\mc{M}$, $B(\mc{M};E)$ 
denotes the space of sections with regularity $B$. 
A set $\Gamma\subset T^*\mc{M}$ (or $\subset T_0^*M$) is called conic if 
$(x,\xi)\in \Gamma$ implies $(x,t\xi)\in \Gamma$ for all $t>0$.

\subsection*{Acknowledgements} 
We would like to thank V. Baladi for pointing out to us the papers \cite{Morg93,Morg96} which are used in the last part of Theorem \ref{t:maintheo2}. 
We also would like to thank Y. Bonthonneau, N.T. Dang, P. Dehornoy, F. Faure, S. Gou\"ezel, B. Hasselblatt, B. Kuester, F. Naud, H. H. Rugh, H. Sanchez-Morgado, 
T. Weich for discussions, answers to our questions and crucial remarks on this project. We also thank the referees for their detailed and useful comments that helped us to improve the presentation of our proofs. This project has received funding from the European Research Council (ERC) under the European Union’s Horizon 2020 research and innovation programme (grant agreement No. 725967). 
CG and GR were  partially supported by the ANR project GERASIC (ANR-13-BS01-0007-01) and GR also acknowledges the support of the Labex CEMPI (ANR-11-LABX-0007-01).

\section{Dynamical and analytical preliminaries}\label{s:preliminaries}

Let $X$ be a smooth vector field on a $n$-dimensional compact manifold $\ml{M}$, and denote by $\varphi_t^X$ its flow on $\ml{M}$. Recall that a vector field is said to be \emph{Anosov} 
if there exist some constants $C,\lambda>0$ and a $d\varphi_t$-invariant continuous splitting
such that, for every $t\geq 0$,
\begin{equation}\label{e:splitting-Anosov}
\begin{gathered} 
T\ml{M}=\IR X\oplus E_u(X)\oplus E_s(X),\\
\forall v\in E_s(X,x),\ \|d\varphi_t^X(x) v\|\leq Ce^{-\lambda t}\|v\|,\quad\forall v\in E_u(X,x),\ \|d\varphi_{-t}^X(x) v\|\leq Ce^{-\lambda t}\|v\|.
\end{gathered}
 \end{equation}

Here we have equipped $\ml{M}$ with a smooth Riemannian metric $g$ that will be fixed all along the paper. The subset of Anosov vector fields
\[\mc{A}:=\{X\in C^\infty(\mc{M};T\mc{M}): X \textrm{ is Anosov}\}\]
forms an open subset of $C^\infty(\mc{M};T\mc{M})$ in the $\ml{C}^{\infty}$ topology. 
Next,  we introduce the dual decomposition to~\eqref{e:splitting-Anosov}:
\[T^*\ml{M}=E_0^*(X)\oplus E_u^*(X)\oplus E_s^*(X)\] 
where  $
E_0^*(X)\left(E_u(X)\oplus E_s(X)\right)=\{0\}$,  $E_{s/u}^*(X)\left(E_{s/u}(X)\oplus \rr X\right)=\{0\}$.
We have for every $t\geq 0$,
\begin{equation}\label{e:hyperbolicity-rate-stable}
\begin{gathered}
\forall v\in E_s^*(X,x),\ \|(d\varphi_{t}^X(x)^T)^{-1} v\|\leq Ce^{-\lambda t}\|v\|,\\
\forall v\in E_u^*(X,x),\ \|(d\varphi_{-t}^X(x)^T)^{-1} v\|\leq Ce^{-\lambda t}\|v\|,
\end{gathered}\end{equation}
where $C>0$ may be larger than in~\eqref{e:splitting-Anosov}.

We define the \emph{symplectic lift} of $\varphi_t^X$ as follows:
$$\forall(x,\xi)\in T^*\ml{M},\ \Phi^X_t(x,\xi):=\left(\varphi^X_t(x),(d\varphi^X_{t}(x)^T)^{-1}\xi\right),$$
and the induced flow on $S^*\ml{M}$:
$$\tilde{\Phi}^X_t(x,\xi):=\left(\varphi^X_t(x),\frac{(d\varphi^X_{t}(x)^T)^{-1}\xi}{\left\|(d\varphi^X_{t}(x)^T)^{-1}\xi\right\|_{\varphi^X_t(x)}}\right).$$
The flow $\Phi^X_t$ is 
the Hamiltonian flow 
corresponding to 
the Hamiltonian $H(x,\xi):=\xi(X(x))$. The vector fields corresponding 
to these lifted flows will be denoted by $X_H$ and $\tilde{X}_H$.

\subsection{Invariant neighborhoods}\label{Sec:invneigh}

Fix some $X_0\in \mc{A}$. We will now recall how to construct cones adapted to the Anosov structure. For that purpose, 
we decompose any given $\xi\in T_x^*\ml{M}$ as 
$$\xi=\xi_0+\xi_u+\xi_s\in E_0^*(X_0,x)\oplus E_u^*(X_0,x)\oplus E_s^*(X_0,x),$$
and we define a new norm on the fibers of $T^*\ml{M}$
$$\|\xi\|'_x:=\|\xi_0\|_x+\int_{-\infty}^0e^{-\lambda_0} t\|(d\varphi^{X_0}_{t}(x)^T)^{-1}\xi_u\|_{\varphi^{X_0}_{t}(x)}dt+\int_0^{+\infty}e^{\lambda_0 t}
\|(d\varphi^{X_0}_{t}(x)^T)^{-1}\xi_s\|_{\varphi^{X_0}_{t}(x)}dt,$$
with $\lambda_0>0$ small enough to ensure that the integrals converge. With these conventions, one has, for every $t_0\geq 0$,
\[\begin{gathered}
\forall\xi\in E_s^*(X_0,x),\quad\|(d\varphi^{X_0}_{t_0}(x)^T)^{-1}\xi\|'\leq e^{-\lambda_0 t_0}\|\xi\|',\\
\forall\xi\in E_u^*(X_0,x),\quad\|(d\varphi^{X_0}_{-t_0}(x)^T)^{-1}\xi\|'\leq e^{-\lambda_0 t_0}\|\xi\|'.
\end{gathered}\]
Note also that, provided the initial metric $g$ is chosen in such a way that $\|X_0(x)\|_x=1$ for every $x$ in $\ml{M}$, one has, for every $t_0\in \IR$, 
$$\forall\xi\in E_0^*(X_0,x),\quad\|(d\varphi^{X_0}_{t_0}(x)^T)^{-1}\xi\|'=\|\xi\|'.$$
In other words, we have constructed a norm adapted to the dynamics of $\varphi^{X_0}_t$. Recall that this new norm is a priori only continuous. 
Nevertheless, we may use it to define 
stable and unstable cones. We fix a small parameter $\alpha>0$ and we introduce a strongly stable cone and a weakly unstable one:
\[\begin{gathered}
C^{ss}(\alpha):=\left\{(x,\xi)\in T_0^*\ml{M}: \|\xi_u+\xi_0\|_x'\leq \alpha \|\xi_s\|_x'\right\},\\
C^{u}(\alpha):=\left\{(x,\xi)\in T_0^*\ml{M}: \alpha\|\xi_u+\xi_0\|_x'\geq  \|\xi_s\|_x'\right\}.
\end{gathered}\]
In the following, $\alpha$ is always chosen strictly less than $1$ to ensure that $C^{ss}(\alpha)\cap C^{u}(\alpha)=\emptyset$. 
We have the following properties, for every $t\geq 0$,
\[\begin{gathered}
\forall(x,\xi)\in C^{ss}(\alpha),\quad \|(d\varphi^{X_0}_{-t}(x)^T)^{-1}(\xi_u+\xi_0)\|_{\varphi_{-t}^{X_0}(x)}' \leq e^{-t\lambda_0}\alpha\|(d\varphi^{X_0}_{-t}(x)^T)^{-1}(\xi_s)\|_{\varphi_{-t}^{X_0}(x)}',\\
\forall(x,\xi)\in C^{u}(\alpha),\quad \alpha e^{-t\lambda_0}\|(d\varphi^{X_0}_{t}(x)^T)^{-1}(\xi_u+\xi_0)\|_{\varphi_{t}^{X_0}(x)}' \geq\|(d\varphi^{X_0}_{t}(x)^T)^{-1}(\xi_s)\|_{\varphi_{t}^{X_0}(x)}'.\end{gathered}\]
In particular, the cone $C^u(\alpha)$ (resp. $C^{ss}(\alpha)$) is stable under the forward (resp. backward) flow of $\varphi^{X_0}_t$. Similarly, we define two cones $C^{uu}(\alpha)$ and $C^s(\alpha)$ with $-X_0$ replacing $X_0$ in the definitions. 
The following result 
 
will be useful in our analysis:
\begin{lemm}\label{l:time-uniform} Let $X_0\in\ml{A}$ and let $0<\alpha<1$ so that $C^{ss}(\alpha)\cap C^{u}(\alpha)=\emptyset$. There exist a neighborhood $\ml{U}_{\alpha}(X_0)$ of $X_0$ in the $C^{\infty}$ topology such that
$$\forall X\in\ml{U}_{\alpha}(X_0),\ \forall t\geq 1,\ \Phi^X_{-t}(C^{ss}(\alpha))\subset C^{ss}(\alpha),\ \text{and}\ \Phi^X_{t}(C^{u}(\alpha))\subset C^{u}(\alpha).$$
\end{lemm}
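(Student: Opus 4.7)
The plan is to first establish a strict form of cone invariance for $X_0$, then propagate it to nearby vector fields by combining an iteration argument with the continuity of the symplectic flow on fixed finite time intervals. I focus on the unstable cone $C^u(\alpha)$; the statement for $C^{ss}(\alpha)$ follows by applying the same argument to the backward flow (equivalently, to $-X$).

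Step~1 (Strict invariance for $X_0$). From the cone estimates recalled just before the lemma I deduce that, for every $(x,\xi)\in C^u(\alpha)$ and every $t\geq 0$, $\Phi^{X_0}_t(x,\xi)\in C^u(\alpha e^{-\gamma t})$, so the aperture of $C^u(\alpha)$ contracts exponentially under the forward symplectic flow. I then pick $T_\alpha>0$ large enough that $\alpha e^{-\gamma T_\alpha}<\alpha/2$; this gives $\Phi^{X_0}_{T_\alpha}(C^u(\alpha))\subset C^u(\alpha/2)$, and forward invariance yields $\Phi^{X_0}_s(C^u(\alpha/2))\subset C^u(\alpha/2)$ for every $s\in[0,T_\alpha]$.

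Step~2 (Perturbation at time $T_\alpha$). Since the cones are conic in $\xi$, I can pass to the compact projectivisation $S^*\ml{M}$, where $C^u(\alpha)\cap S^*\ml{M}$ is compact and $C^u(\alpha/2)\cap S^*\ml{M}$ lies in the interior of $C^u(\alpha)\cap S^*\ml{M}$. The map $(X,z)\mapsto \tilde\Phi^X_{T_\alpha}(z)$ is jointly continuous on $C^\infty(\ml{M};T\ml{M})\times S^*\ml{M}$ (the time-$T_\alpha$ diffeomorphism together with its differential depends continuously on $X$), so a standard compactness argument produces a neighborhood $\mathcal{U}_1$ of $X_0$ with $\tilde\Phi^X_{T_\alpha}(C^u(\alpha)\cap S^*\ml{M})\subset C^u(\alpha/2)\cap S^*\ml{M}$ for every $X\in\mathcal{U}_1$. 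Iterating this single-step inclusion, together with $C^u(\alpha/2)\subset C^u(\alpha)$, then gives $\Phi^X_{nT_\alpha}(C^u(\alpha))\subset C^u(\alpha/2)$ for every $n\ge 1$ and $X\in\mathcal{U}_1$.

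Step~3 (Intermediate times and conclusion). For an arbitrary $t\geq T_\alpha$, write $t=nT_\alpha+s$ with $n\ge 1$ and $s\in[0,T_\alpha]$; then $\Phi^X_t(C^u(\alpha))\subset \Phi^X_s(C^u(\alpha/2))$, so what remains is a uniform-in-$s$ estimate $\Phi^X_s(C^u(\alpha/2))\subset C^u(\alpha)$ for $s\in[0,T_\alpha]$ and $X$ near $X_0$. For $X_0$ this inclusion holds with room to spare by Step~1, and the tube lemma applied to the jointly continuous map $(X,s,z)\mapsto \tilde\Phi^X_s(z)$ on the compact set $[0,T_\alpha]\times (C^u(\alpha/2)\cap S^*\ml{M})$ produces a neighborhood $\mathcal{U}_2$ of $X_0$ witnessing it. Setting $\ml{U}_\alpha(X_0):=\mathcal{U}_1\cap\mathcal{U}_2$ concludes the proof. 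I expect the main technical point to be precisely this uniform-in-$s$ control on intermediate times: Step~2 alone would only give cone invariance at discrete times $nT_\alpha$, and one genuinely needs to exploit that for $X_0$ the set $\Phi^{X_0}_s(C^u(\alpha/2))$ is compactly contained in the interior of $C^u(\alpha)$ in order to make the perturbation estimate uniform over $s\in[0,T_\alpha]$.
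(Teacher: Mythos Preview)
Your proof is correct, but it follows a different route from the paper's. The paper constructs, for each $X$ close to $X_0$, an adapted norm $\|\cdot\|'_X$ (uniformly equivalent to $\|\cdot\|$), and hence $X$-adapted cones $C^u_X(\alpha_1)$ which are forward-invariant under $\Phi^X_t$ for \emph{all} $t\geq 0$. It then proves a sandwich $C^u(\alpha_2)\subset C^u_X(\alpha_1)\subset C^u(\alpha)$ for suitable $\alpha_2<\alpha_1$, and perturbs the single inclusion $\Phi^{X_0}_{T_\alpha}(C^u(\alpha))\subset C^u(\alpha_2/2)$ to $\Phi^X_{T_\alpha}(C^u(\alpha))\subset C^u(\alpha_2)\subset C^u_X(\alpha_1)$; the exact forward-invariance of $C^u_X(\alpha_1)$ under $\Phi^X$ then immediately gives the conclusion for all $t\geq T_\alpha$, with no separate argument for intermediate times. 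Your approach avoids introducing the $X$-adapted cones entirely and instead iterates the single-step inclusion together with a tube-lemma control on $s\in[0,T_\alpha]$. This is more elementary and fully self-contained for the lemma at hand. The paper's approach has the side benefit of establishing the uniform equivalence of the adapted norms $\|\cdot\|'_X$, which it reuses elsewhere (notably in Appendix~A).
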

\begin{proof} We only discuss the case of $C^u(\alpha)$ as the other case is similar. From the above construction, one knows that 
$$\Phi_1^{X_0}\left(C^u(\alpha)\right)\subset C^u\left(\alpha e^{-\lambda_0}\right)\Subset C^u\left(\alpha e^{-\frac{\lambda_0}{2}}\right)\Subset C^u(\alpha).$$
Hence, for any $X$ in a small neighborhood of $X_0$ in the $\ml{C}^{\infty}$-topology, one has
$$\Phi_1^{X}\left(C^u(\alpha)\right)\subset  C^u\left(\alpha e^{-\frac{\lambda_0}{2}}\right),$$
from which one infers that, for every positive integer $j$,
$\Phi_j^{X}\left(C^u(\alpha)\right)\subset  C^u\left(\alpha e^{-\frac{\lambda_0}{2}}\right).$ Now, as for every $0\leq t\leq 1$,
$$\Phi_t^{X_0}\left(C^u\left(\alpha e^{-\frac{\lambda_0}{2}}\right)\right)\subset  C^u\left(\alpha e^{-\frac{\lambda_0}{2}}\right),$$
one can deduce that, for every $X$ close enough to $X_0$, one has
$$\forall t\geq 1,\quad\Phi_t^{X}\left(C^u(\alpha)\right)\subset  C^u(\alpha).$$
This concludes the proof of the Lemma.
\end{proof}

\subsection{Escape functions}

In order to study analytical properties of Anosov flows, we  make use of the microlocal tools developped by Faure-Sj\"ostrand~\cite{FaSj}, Dyatlov-Zworski~\cite{DyZw13}. One of the key ingredients of these spectral constructions is the existence of an escape function:

\begin{lemm}[Escape functions]\label{l:escape-function} There exist a function $f\in C^{\infty}(T^*\ml{M},\IR_+)$ which is $1$-homogeneous for 
$\|\xi\|_x\geq 1$, a constant $c_0>0$ and a constant $0<\tilde{\alpha}_0<1$ such that the following properties hold:
\begin{enumerate}
 \item $f(x,\xi)=\|\xi\|_x$ for $\|\xi\|_x\geq 1$ and $(x,\xi)\notin C^{uu}(\tilde{\alpha}_0)\cup C^{ss}(\tilde{\alpha}_0)$,
 \item for every $N_1>16N_0>0$ and $0<\alpha_0<\tilde{\alpha}_0$, there exist $0<\alpha_1<\alpha_0$ and  
 a neighborhood $\ml{U}(X_0)$ of $X_0$ in the $\ml{C}^{\infty}$-topology for which one can construct, 
 for any $X$ in $\ml{U}(X_0)$, a smooth function
 $$m_X^{N_0,N_1}:T^*\ml{M}\rightarrow[-2N_0,2N_1]$$
 with the following requirements
\begin{itemize}
  \item $m_X^{N_0,N_1}$ is $0$-homogeneous for $\|\xi\|_x\geq 1$,
  \item $m_X^{N_0,N_1}(x,\xi/\|\xi\|_x)\geq N_1$ on $C^{ss}(\alpha_1)$, $m_X^{N_0,N_1}(x,\xi/\|\xi\|_x)\leq -N_0$ on $C^{uu}(\alpha_1)$, 
   \item $m_X^{N_0,N_1}(x,\xi/\|\xi\|_x)\geq \frac{N_1}{8}$ outside $C^{uu}(\alpha_0)$
  \item there exist $R\geq 1$ such that, for every $X\in\ml{U}(X_0)$ and for every $(x,\xi)$ outside a small vicinity of $E_0^*(X_0)$ (independent of $X$), 
  one has
  \begin{equation}\label{e:decay-escape-function}\|\xi\|_x\geq R\quad \Longrightarrow \quad X_H(G_X^{N_0,N_1})(x,\xi)\leq -2c_0\min\{N_0,N_1\},\end{equation}
  where 
  \begin{equation}\label{GX}
  G_X^{N_0,N_1}(x,\xi):=m_{X}^{N_0,N_1}(x,\xi)\ln(1+f(x,\xi)),
  \end{equation}
  and where $R$ can be chosen equal to $1$ on $C^{uu}(\alpha_1)\cup C^{ss}(\alpha_1)$.
  \item there exists a constant $C_{N_0,N_1}>0$ such that, for every $X\in\ml{U}(X_0)$,
  \begin{equation}\label{e:bound-derivative-escape-function}
  \|\xi\|_x\geq R\quad \Longrightarrow \quad X_H(G_X^{N_0,N_1})(x,\xi)\leq C_{N_0,N_1},\end{equation}
\end{itemize}
\item Moreover,
$$X\in C^{\infty}(\ml{M};T^*\mc{M})\rightarrow m_X^{N_0,N_1}\in\ml{C}^{\infty}(T^*\ml{M},[-2N_0,2N_1])$$
is a smooth function.
\end{enumerate}
\end{lemm}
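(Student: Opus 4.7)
The plan is to factor $G_X^{N_0,N_1}(x,\xi) = m_X^{N_0,N_1}(x,\xi)\ln(1+f(x,\xi))$ into an $X$-independent radial weight $\ln(1+f)$ and a $0$-homogeneous angular weight $m_X^{N_0,N_1}$, and to extract the escape estimate~\eqref{e:decay-escape-function} from the Leibniz splitting
\[
 X_H(G_X^{N_0,N_1}) \;=\; X_H(m_X^{N_0,N_1})\,\ln(1+f) \;+\; m_X^{N_0,N_1}\,\frac{X_H(f)}{1+f}.
\]
The second term will supply the decay inside the hyperbolic cones via the radial contraction/expansion of $f$, and the first term will supply it in the transition region via a Lyapunov monotonicity of $m_X^{N_0,N_1}$ along the flow. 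A key observation is that both factors may be built from $X_0$-data only: it is enough to take $m_X^{N_0,N_1}$ \emph{independent} of $X$, so that condition~(3) on smooth dependence in $X$ is trivialized, and the required inequalities for $X\in\mc{U}(X_0)$ then follow by continuity, using Proposition~\ref{r:unstable-cone}, Lemma~\ref{l:time-uniform}, and the uniform equivalence of adapted norms~\eqref{e:equivalence-norm}.

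For $f$, I would glue the stable and unstable components of the $X_0$-adapted norm $\|\cdot\|'_{X_0}$ of Section~\ref{Sec:invneigh} inside $C^{ss}(\tilde\alpha_0)$ and $C^{uu}(\tilde\alpha_0)$ with $\|\xi\|_x$ on the complement, using a $0$-homogeneous partition of unity on $S^*\mc{M}$, and extend $1$-homogeneously to $\{\|\xi\|_x\geq 1\}$. The contraction estimates recalled in Section~\ref{Sec:invneigh} give $X_{0,H}(f)/f\leq -\gamma/2$ on $C^{ss}(\alpha_1)$ and $X_{0,H}(f)/f\geq \gamma/2$ on $C^{uu}(\alpha_1)$; by Lemma~\ref{l:time-uniform} and~\eqref{e:equivalence-norm} the same bounds with $\gamma/4$ persist uniformly for $X_H$, $X\in\mc{U}(X_0)$. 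For the angular weight, I would take $m_X^{N_0,N_1}\equiv\psi$ with $\psi\in C^\infty(S^*\mc{M};[-2N_0,2N_1])$ a $0$-homogeneous Lyapunov function for the induced flow $\tilde\Phi^{X_0}_t$ on $S^*\mc{M}$, equal to $2N_1$ on small conic neighborhoods of $E_s^*(X_0)$ and $E_0^*(X_0)$, equal to $-2N_0$ near $E_u^*(X_0)$, satisfying $\psi\geq N_1/4-2N_0$ off $C^{uu}(\alpha_0)$, and with $\tilde X_{0,H}(\psi)\leq -2c_0\min(N_0,N_1)$ in the transition region (outside a prescribed small vicinity of $E_0^*(X_0)$). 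Such $\psi$ is produced by a Conley--Lyapunov interpolation on $S^*\mc{M}$ for the attractor--repellor pair $(E_u^*(X_0),E_s^*(X_0))$ of $\tilde\Phi^{X_0}_t$, in the spirit of~\cite{FaSj, DyZw13}.

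The escape estimate then follows by case analysis. On $C^{ss}(\alpha_1)\cup C^{uu}(\alpha_1)$ with $\|\xi\|_x\geq R$, the sign of $\psi$ matches the sign of $-X_H(f)$ (positive on the stable cone, negative on the unstable cone), so the second term is bounded above by $-(\gamma/16)\min(N_0,N_1)$, which dominates the first term once the Lyapunov inequality for $X_H(\psi)$ is accounted for. In the transition region (outside a small conic neighborhood of $E_0^*(X_0)$), the first term writes as $[\tilde X_{0,H}(\psi)+(\tilde X_H-\tilde X_{0,H})\psi]\ln(1+f)$; the leading piece is $\leq -2c_0\min(N_0,N_1)\ln(1+f)$ by construction, while the perturbation is bounded by $C\|X-X_0\|_{C^1}(N_0+N_1)\ln(1+f)$ and is absorbed after shrinking $\mc{U}(X_0)$ in an $N_0,N_1$-dependent way. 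The second term remains uniformly bounded, so for $R$ chosen with $\ln(1+R)$ large enough the first term dominates, yielding~\eqref{e:decay-escape-function}. The upper bound~\eqref{e:bound-derivative-escape-function} is the pointwise-upper companion obtained from the same splitting, using that $|X_H(f)|/(1+f)$ and $|\tilde X_H(\psi)|$ are uniformly bounded on $\mc{U}(X_0)$.

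The main obstacle -- and the heart of Appendix~\ref{a:escape} -- is constructing the Lyapunov template $\psi$ itself: one must simultaneously arrange its pointwise values on the prescribed cones of $X_0$, its smoothness, and the quantitative monotonicity $\tilde X_{0,H}(\psi)\leq -2c_0\min(N_0,N_1)$ with $c_0$ independent of $N_0,N_1$. The uniformity across $\mc{U}(X_0)$ rests on the flow-time invariance time $T_{\alpha}$ of Lemma~\ref{l:time-uniform} being chosen once and for all, together with the inclusion of $E^*_{s/u}(X)$ into the cones of $X_0$ from Proposition~\ref{r:unstable-cone}; once the template $\psi$ is in hand, the passage from $X_0$ to any $X\in\mc{U}(X_0)$ is a direct perturbative argument.
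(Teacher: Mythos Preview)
Your overall strategy --- factoring $G_X$ into a radial weight $\ln(1+f)$ and an angular weight, then extracting the decay via the Leibniz splitting, with hyperbolicity of $f$ supplying the estimate on the cones and monotonicity of the order function supplying it in the transition zone --- is exactly the architecture of the paper's proof in Appendix~\ref{a:escape}. There is, however, one genuine gap and one substantive difference in approach worth noting.

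The gap is in your construction of $f$. You propose to glue pieces of the adapted norm $\|\cdot\|'_{X_0}$ of Section~\ref{Sec:invneigh} inside the stable and unstable cones. But as explicitly noted there, $\|\cdot\|'_{X_0}$ is only continuous: it is built from the Anosov decomposition $\xi=\xi_0+\xi_u+\xi_s$, and the bundles $E^*_{s/u}(X_0)$ are only H\"older in $x$. So $\|\xi_s\|'_{X_0}$ is not a smooth function of $(x,\xi)$, and neither is any gluing built from it; the resulting $f$ would not lie in $C^\infty(T^*\mc{M})$, and you could not quantize $G_X$. The paper's Appendix~\ref{a:escape} avoids this by defining $f$ near the cones via a \emph{finite-time} average of $\ln\|(d\varphi^{X_0}_t)^{-T}\xi\|$ along the flow of $X_0$, using only the smooth Riemannian norm $\|\cdot\|$; smoothness is then automatic, and the contraction/expansion bounds~\eqref{e:decay-stable-X}--\eqref{e:decay-unstable-X} follow by choosing the averaging time large relative to the hyperbolicity constants.

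The difference is in the order function. You take $m_X^{N_0,N_1}\equiv\psi$ independent of $X$ and recover monotonicity for $X\in\mc{U}(X_0)$ by a perturbation argument, shrinking $\mc{U}(X_0)$. The paper instead makes $m_X$ genuinely $X$-dependent, via the averaged function~\eqref{e:order-function-X} along the lifted flow $\tilde\Phi^X_t$ of $X$ itself (with a fixed, $X$-independent averaging time $T'_{\alpha_0}$). This yields the \emph{exact} inequality $X_H(m_X^{N_0,N_1})\leq 0$ for every $X\in\mc{U}(X_0)$ rather than an approximate one, and the quantitative lower bound in the transition zone is read off directly from~\eqref{e:derivative-order-function}--\eqref{e:decay-order-transit}. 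Smooth dependence~(3) then holds because the averaging time is $X$-independent while $\tilde\Phi^X_t$ depends smoothly on $X$. Your $X$-independent route is legitimate once $f$ is repaired, but it trades a clean exact monotonicity for a perturbative step; the paper's time-averaging construction also replaces your unspecified ``Conley--Lyapunov interpolation'' by an explicit formula with explicit constants.
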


Under this form, this Lemma was proved in~\cite[Lemma~1.2]{FaSj} (or Lemma~\cite[Lemma~C.1]{DyZw13}). For our purpose, 
the only inputs with the statements from these references is that we need the escape function to depend 
smoothly on the vector field $X$ and 
the conic neighborhoods must be chosen uniformly w.r.t. $X$. 
We postpone the proof of this Lemma to Appendix~\ref{a:escape}. 
Note that, compared with the construction of~\cite{FaSj}, we do not have decay of the escape function 
$G_X^{N_0,N_1}$ in a small vicinity of the flow direction but this will be compensated by the ellipticity of the principal 
symbols in these directions -- see e.g. the proof of Proposition~\ref{p:contofresolvent} below. 
We could have chosen $f(x,\xi)$ to depend on $X$ and in that manner, we would get 
$X_H(G_X^{N_0,N_1})\leq 0$ for every $\xi$ large enough even near the flow direction -- see~\cite{FaSj}. Despite the fact that 
$f(x,\xi)$ is not equal to $\|\xi\|_x$ in a vicinity of $E_u^*$ and of $E_s^*$, we emphasize that 
$C^{-1}\|\xi\|_x\leq f(x,\xi) \leq C\|\xi\|_x$ for $|\xi|\geq 1$ (for some uniform constant $C>0$). 

The different properties stated in this Proposition may look technical but, except for the third part of~(2), they all played a role in the microlocal proof given in~\cite{FaSj}. Even if it will not be used in our analysis, this extra point compared with~\cite{FaSj} can be used to describe the wavefront set of the resonant states uniformly for a family of Anosov vector fields near a given $X_0$. This Lemma is also used under that form in \cite{GKL}. This is the reason why we wrote the it in such a generality.

\subsection{Pollicott-Ruelle spectrum}

Consider a smooth complex vector bundle $E\to \mc{M}$ equipped with a flat connection $\nabla:
\Omega^0(\ml{M},E)\rightarrow\Omega^1(\ml{M},E)$, where we denote
$\Omega^k(\mc{M},E)=C^\infty(\ml{M}; \Lambda^k(T^*\mc{M}) \otimes E)$. This connection induces a representation 
\begin{equation}\label{represent}
\rho : \pi_1(\mc{M})\to {\rm GL}(\cc^r)
\end{equation}
by taking $\rho([\gamma])$ to be the parallel transport with respect to $\nabla$ along a representative $\gamma$ of $[\gamma]\in \pi_1(\mc{M})$. We also denote by $\ml{E}$ the graded vector 
bundle $$\ml{E}:=\bigoplus_{k=0}^n \mc{E}^k, \quad \mc{E}^k:=\wedge^k(T^*\mc{M})\otimes E.$$ Associated with this connection is a twisted exterior derivative $d^\nabla$ acting on 
the space $\Omega(\ml{M},E)=\oplus_{k=0}^n\Omega^k(\mc{M},E)$. 
Since $\nabla$ is flat, one has $d^\nabla\circ d^\nabla=0$. As before, we 
fix a smooth Riemannian metric $g$ on $\ml{M}$ and a smooth hermitian structure $\langle.,.\rangle_{E}$ on $E$. This induces a scalar product on $\Omega(\ml{M},E)$ by setting, for every 
$(\psi_1,\psi_2)\in \Omega^k(\ml{M},E)$,
\[\langle \psi_1,\psi_2\rangle_{L^2}:=\int_{\ml{M}}\langle \psi_1,\psi_2\rangle_{\mc{E}^k}{\rm dvol}_g.\]
We set $L^2(\ml{M},\ml{E})$ (or $L^2(\ml{M})$ if there is no ambiguity) to be the completion of $\Omega(\ml{M},E)$ for this scalar product. The set of
De Rham currents  
valued\footnote{Observe that $E'$ can be identified with $E$ via the Hermitian structure.} in $E$ is denoted by $\ml{D}^{\prime}(\ml{M},E)$.

Given $X\in \mc{A}$, we define the twisted Lie derivative  
\begin{equation}\label{defbfX}
\mathbf{X}:= i_{X}d^\nabla+d^\nabla i_{X}:\Omega(\ml{M},E)\rightarrow \Omega(\ml{M},E).\end{equation} 
In local coordinates $(x_1,\ldots,x_n)$, its action can be written as follows. Fix $(e_j)_{1\leq j\leq r}$ a local basis of the vector bundle $E$. For every $1\leq j\leq r$ and every $J\subset\{1,\ldots,n\}$, one has
$$\mathbf{X}\left(u_{j,J}dx_J\otimes e_j\right)=\mathcal{L}_X(u_{j,J})dx_J\otimes e_j+u_{j,J}\mathcal{L}_{X}(dx_J)\otimes e_{j}+u_{j,J}dx_J\otimes\left(\nabla_Xe_j\right)$$
where $\mathcal{L}_X$ is the standard Lie derivative acting on smooth forms and 
$dx_J=dx_{J_1}\wedge\dots,dx_{J_{k}}$ if $J=(J_1,\dots,J_k)$. Hence, as the last two terms in the above sum are of order $0$, the differential operator $-i{\bf X}$ has diagonal principal symbol given by 
\begin{equation}\label{princsymb}
\sigma(-i{\bf X})(x,\xi)=H(x,\xi){\rm Id}_{\ml{E}}
\end{equation}
(recall $H(x,\xi)=\xi(X(x))$). Note that ${\bf X}$ preserves $\Omega^k(\mc{M},E)$ for each $k$.   
Also, since $[{\bf X} ,i_{X}]=0$, it also preserves sections of the bundle (depending smoothly on $X$)
\begin{equation}\label{Omega_0}
\mc{E}_0:=\mc{E}\cap \ker i_{X}=\bigoplus_{k=0}^{n-1}\underset{:=\mc{E}_0^k}{\underbrace{\mc{E}^k\cap \ker i_{X}}}.
\end{equation}

It was shown in~\cite{BL07, FaSj, GLP, DyZw13} that this differential operator 
has a discrete spectrum when acting on convenient Banach spaces of currents. Let us recall this result using the microlocal 
framework from~\cite{FaSj, DyZw13}. Using~\cite[Th.~8.6]{Zw} and letting $N_0,N_1>0$ be two positive parameters, we set
\begin{equation}\label{e:sobolev-weight}
\mathbf{A}_h(N_0,N_1,X):=\Op_h\left(e^{G_X^{N_0,N_1}}\text{Id}_{\ml{E}}\right),
\end{equation}
where $\Op_h$ is a semiclassical quantization procedure on $\ml{M}$~\cite[Th.~14.1]{Zw}. 
As in \cite[Lemma 12]{FRS} (or the semiclassical version of it), if $h>0$ is small enough there is a semiclassical 
pseudo-differential operator $\til{\bf A}^{-1}_h(N_0,N_1,X)$ with semiclassical principal symbol $e^{-G_X^{N_0,N_1}}$ so that $\mathbf{A}_h(N_0,N_1,X)\til{\bf A}^{-1}_h(N_0,N_1,X)={\rm Id}+h^\infty\Psi^{-\infty}(\mc{M};\mc{E})$, which implies that $\mathbf{A}_h(N_0,N_1,X)^{-1}$ exists and is a pseudodifferential operator with principal symbol $e^{-G_X^{N_0,N_1}}$ and local full symbol (in charts) that only involves derivatives of $G$. 

We then define the (semiclassical) 
\emph{anisotropic Sobolev spaces}:
$$\forall 0<h\leq 1,\quad\ml{H}_h^{m^{N_0,N_1}_X}(\ml{M},\ml{E}):=\mathbf{A}_h(N_0,N_1,X)^{-1}L^2(\ml{M};\ml{E}),$$
where we used the subscript $X$ to remind the dependence of these spaces on the vector field $X$. These spaces are related to the usual semiclassical Sobolev spaces $H_h^{k}(\mc{M};\mc{E}):=(1+h^2\Delta_{\mc{E}})^{-k/2}L^2(\mc{M};\mc{E})$ as follows ($\Delta_{\mc{E}}$ is some positive Laplacian on $\mc{E}$)
\begin{equation}\label{e:sobolev-injection}
H_h^{2N_1}(\ml{M},\ml{E})\subset \ml{H}_h^{m^{N_0,N_1}_X}(\ml{M},\ml{E})\subset H_h^{-2N_0}(\ml{M},\ml{E}),
\end{equation}
with continuous injections. Stated in the case of a general smooth vector bundle $E$, the main results from~\cite[Th.~1.4-5, \S~5]{FaSj} and~\cite[Prop.~3.1-3]{DyZw13} read as follows:
\begin{prop}\label{p:ruelle-spectrum} 
Let $X$ be an element in $\ml{U}(X_0)$ where $\ml{U}(X_0)$ is the neighborhood of Lemma~\ref{l:escape-function}. Then, there is $h_0>0$ small and 
exists $C_{X}>0$ (depending continuously\footnote{Even 
if not explicitely written in~\cite{FaSj}, this observation can be deduced from paragraph~3.2 
of this reference and from Lemma~\ref{l:escape-function} above.} on $X\in\ml{A}$) such that, 
for any $N_1>16N_0>0$, the resolvent
\[(\mathbf{X}+\lambda)^{-1}=\int_0^{+\infty}e^{-t\mathbf{X}}e^{-t\lambda}dt:\ml{H}_{h_0}^{m_X^{N_0,N_1}}(\ml{M},\ml{E})\rightarrow \ml{H}_{h_0}^{m_X^{N_0,N_1}}(\ml{M},\ml{E})\]
is holomorphic in $\{\operatorname{Re}(\lambda)>C_{X}\}$ and has a meromorphic extension to
$$\left\{\operatorname{Re}(\lambda)>C_{X}-c_0N_0\right\},$$ where $c_0>0$ 
is the constant from Lemma~\ref{l:escape-function}. The poles of this meromorphic extension are called the Pollicott-Ruelle resonances and the range of the residues are the corresponding  generalised resonant states. Moreover, the poles and residues of the meromorphic extension are intrinsic and do not depend on the choice of escape function used to define the anisotropic Sobolev space.
\end{prop}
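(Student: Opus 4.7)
The plan is to adapt the Faure--Sj\"ostrand microlocal approach of~\cite{FaSj, DyZw13} to the present bundle-valued setting, the main novelty being to preserve uniform control of all constants for $X\in\ml{U}(X_0)$. First I would reduce to an $L^2$-problem by conjugation: since $\mathbf{A}_h(N_0,N_1,X)$ is a scalar elliptic pseudodifferential operator realising an isomorphism $\ml{H}_h^{m_X^{N_0,N_1}}\to L^2$, one studies
$$\mathbf{P}_X(\lambda):=\mathbf{A}_h\,(\mathbf{X}+\lambda)\,\mathbf{A}_h^{-1}$$
on $L^2(\ml{M};\ml{E})$. A standard semiclassical computation~\cite[Ch.~9,14]{Zw} together with~\eqref{princsymb} shows that $\mathbf{P}_X(\lambda)$ is a pseudodifferential operator whose principal symbol is, at leading order, $\bigl(iH(x,\xi)+X_H(G_X^{N_0,N_1})(x,\xi)+\lambda\bigr)\operatorname{Id}_{\ml{E}}$. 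All constants below depend continuously on $X$ thanks to the smooth dependence of $m_X^{N_0,N_1}$ stated in Lemma~\ref{l:escape-function}(3).

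The Fredholm estimate is then derived by combining two complementary mechanisms. Outside a small conic vicinity $\Gamma_0$ of $E_0^*(X_0)$ and for $\|\xi\|_x\geq R$, the escape bound~\eqref{e:decay-escape-function} yields $\operatorname{Re}\lambda+X_H(G_X^{N_0,N_1})(x,\xi)\leq \operatorname{Re}\lambda-2c_0\min\{N_0,N_1\}$, so that for $\operatorname{Re}\lambda>C_X-c_0\min\{N_0,N_1\}$ a sharp G\r{a}rding argument produces a coercive $L^2$-estimate on this region. Inside $\Gamma_0$, the imaginary part $H(x,\xi)=\xi(X(x))$ is elliptic of order $\|\xi\|_x$, so the usual elliptic parametrix handles this region (this is where we exploit the fact that the escape function is not required to decay near the flow direction). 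Patching these estimates and transporting back via $\mathbf{A}_h^{-1}$ yields
$$\|u\|_{\ml{H}_h^{m_X^{N_0,N_1}}}\leq C\|(\mathbf{X}+\lambda)u\|_{\ml{H}_h^{m_X^{N_0,N_1}}}+C\|u\|_{H^{-N}(\mc{M};\mc{E})},$$
uniformly in $X\in\ml{U}(X_0)$, together with a dual estimate for the formal adjoint. Since the inclusion $\ml{H}_h^{m_X^{N_0,N_1}}\hookrightarrow H^{-N}$ is compact by~\eqref{e:sobolev-injection}, standard functional analysis shows that $\mathbf{X}+\lambda$ is Fredholm of index zero on $\ml{H}_h^{m_X^{N_0,N_1}}$ on the open half-plane $\{\operatorname{Re}\lambda>C_X-c_0\min\{N_0,N_1\}\}$.

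For $\operatorname{Re}\lambda$ sufficiently large (exceeding the exponential growth rate of $t\mapsto e^{-t\mathbf{X}}$ on $L^2$, which depends continuously on the first jet of $X$ and on $\|\rho\|$), the defining Laplace transform converges absolutely as a bounded operator on $L^2$ and hence, via~\eqref{e:sobolev-injection}, on every anisotropic space; the threshold $C_X$ can thus be chosen continuously in $X$. Analytic Fredholm theory applied to the previous step extends $\lambda\mapsto (\mathbf{X}+\lambda)^{-1}$ meromorphically to the full half-plane $\{\operatorname{Re}\lambda>C_X-c_0\min\{N_0,N_1\}\}$, with finite-rank residues whose range coincides with $\ker(\mathbf{X}+\lambda_0)^k$ in $\ml{H}_h^{m_X^{N_0,N_1}}$ for some $k\geq 1$.

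Finally, for the intrinsic character of poles and residues, I would invoke the radial point estimates at the sources/sinks $E_{s/u}^*(X)$ established in~\cite{FaSj,DyZw13}: any generalised resonant state $u$ in $\ml{H}_h^{m_X^{N_0,N_1}}$ must satisfy $\WF(u)\subset E_u^*(X)$, and hence lies in every anisotropic space $\ml{H}_h^{m_X^{N_0',N_1'}}$; a dual argument applies to co-resonant states. Consequently, enlarging the escape parameters only enlarges the half-plane of meromorphy while leaving the resonance data unchanged, and the same comparison argument between two different choices of $G_X^{N_0,N_1}$ shows that the residues themselves are intrinsic. The main obstacle throughout is preserving uniform control in $X\in\ml{U}(X_0)$ of the semiclassical constants, the principal symbol computations, and the radial estimates; this is precisely what the smooth family of escape functions provided by Lemma~\ref{l:escape-function}(3) is designed to ensure.
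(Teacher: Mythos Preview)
Your sketch is correct and follows essentially the same microlocal strategy as the references~\cite{FaSj,DyZw13} that the paper invokes for this proposition (the paper does not give an independent proof here, only the two-sentence explanation following the statement). The one stylistic difference worth noting is that when the paper later revisits this argument in Proposition~\ref{p:contofresolvent}, it follows~\cite{FaSj} more literally by introducing a compactly microsupported absorbing potential $\hat{\chi}$ and showing that $(P_X(h,\lambda)-\hat{\chi})^{-1}$ is globally bounded on $L^2$, then treating $\hat{\chi}(P_X(h,\lambda)-\hat{\chi})^{-1}$ as a compact perturbation; you instead go directly to a Fredholm estimate via G\r{a}rding plus ellipticity near $E_0^*$ and compactness of the inclusion into $H^{-N}$. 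Both routes are standard and yield the same conclusion; the absorbing-potential version has the minor advantage of producing an explicit globally invertible reference operator, which the paper exploits in its continuity-in-$X$ argument.
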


This result should be understood as follows. In these references, $(\mathbf{X}+\lambda):\ml{D}(\mathbf{X})\rightarrow \ml{H}^m_h$ is shown to be 
a family of Fredholm operators of index $0$ depending analytically on $\lambda$ in the region $\{\text{Re}(\lambda)>C_{X}-c_0N_0\}$. Then, 
the poles of the meromorphic extension are the eigenvalues of $-\mathbf{X}$ on $\ml{H}_h^{m_X^{N_0,N_1}}(\ml{M},\ml{E})$. 
We shall briefly rediscuss 
the proofs of~\cite{FaSj, DyZw13} in Proposition~\ref{p:contofresolvent} below as 
we will need to control the continuity of $(\mathbf{X}+\lambda)^{-1}$ with respect to $X\in \mc{A}$. We also refer to the recent work of Guedes-Bonthonneau for related results~\cite{Bo18}.
 
\begin{rem}
 For technical reasons appearing later in the analysis of the wave-front set of the Schwartz kernel of $({\bf X}+\la)^{-1}$, we 
use a semiclassical parameter $h$ and a semiclassical quantization, even though the operator ${\bf X}+\la$ is not semiclassical. 
For this Proposition, one just fix $h=h_0$ but some statement for $h\to 0$ will be used later on in the proof of Proposition \ref{p:boundonWF}.
\end{rem}

\begin{rem}\label{r:N0N1} In the following, we will take $N_1=20N_0$ and thus we will omit the index $N_1$ in $G_X^{N_0,N_1}$, $m_X^{N_0,N_1}$ and $\mathbf{A}_h(N_0,N_1,X)$.
 
\end{rem}

\section{Twisted Ruelle zeta function and variation formula}\label{s:ruelle-torsion}

In this section, we shall introduce the Ruelle zeta function and 
derive a formula\footnote{Similar method is also used in \cite{FRZ} 
for Selberg zeta function on surfaces of constant curvature.} for its variation with respect to the vector field $X\in \mc{A}$.
More precisely,  we consider a smooth $1$-parameter family 
$\tau \in (-1,1) \mapsto X_\tau\in \mc{A}$ on $\mc{M}$ and we fix a representation $\rho:\pi_1(\mc{M})\to {\rm GL}(\cc^r)$. 
We define the \emph{Ruelle zeta function} of $(X_\tau,\rho)$ as in ~\cite{F87} by  the converging product\footnote{As we shall 
consider families $\tau\mapsto X_{\tau}$, if no confusion is possible we will use the index (or the exponent) 
$\tau$ instead of $X_{\tau}$ in the various quantities $\varphi_t^{X_{\tau}}$, $\zeta_{X_{\tau},\rho}$, etc.}
\begin{equation}\label{defRuelle}
\zeta_{\tau,\rho}(\la):=\prod_{\gamma_\tau\in \mc{P}_\tau}\det(1-\varepsilon_{\gamma_{\tau}}\rho([\gamma_\tau])e^{-\la \ell(\gamma_\tau)})
\end{equation}
for ${\rm Re}(\la)>\Lambda_\tau$ (for some $\Lambda_\tau>0$), where $\mc{P}_\tau$ is the set of primitive periodic orbits of $X_\tau$, $[\gamma_\tau]$ represents 
the class of $\gamma_\tau$ in $\pi_1(\mc{M})$, and $\ell(\gamma_\tau)$ denotes the period of the orbit $\gamma_\tau$. Recall also that $\varepsilon_{\gamma_{\tau}}$ is the orientation index of the closed orbit. 
To justify the convergence, it suffices to combine the fact that for a fixed 
Hermitian product $\cjg\cdot,\cdot\cjd_E$ on $E$, there is $C>0$ depending only on $(\nabla,E,\cjg\cdot,\cdot\cjd_E)$  such that $||\rho([\gamma_\tau])||_{E\to E}\leq e^{C\ell(\gamma_\tau)}$, together with  Margulis bound~\cite{Ma70} on the growth of periodic orbits
 \begin{equation}\label{e:margulis}
 \left|\left\{\gamma\in \mc{P}_\tau:\ \ell(\gamma_\tau)
 \leq T\right\}\right|=\mathcal{O}\left( \frac{e^{T h_{\rm top}^\tau}}{T}\right)\quad\text{as}\quad T\rightarrow+\infty
 \end{equation}
where $h_{\rm top}^\tau$ denotes the topological entropy of the flow $\varphi_t^\tau$ of $X_\tau$ at time $t=1$ -- see also~\cite[Lemma~2.2]{DyZw13}. Recall that, for weak-mixing Anosov flows, one has in fact an equivalent.

\begin{rem}\label{r:zeta-extension} In~\cite{GLP, DyZw13}, the proof of the meromorphic continuation of the Ruelle zeta function was given under the extra assumptions that $\varepsilon_\gamma=1$ for every $\gamma\in\mathcal{P}$, and that the representation $\rho$ is trivial. The analysis of the resolvent and of the dynamical zeta functions in the case of general bundle is done in \cite[\S 5, Th.~4]{DyGu1} in a setting containing the case of Anosov flows. We thus refer to that last article for the discussion about non-trivial representations and general bundles. 
The factor $\varepsilon_\gamma$ in \eqref{defRuelle} allows to compensate the assumption about orientability of the stable/unstable bundles.
\end{rem}

\subsection{Variation of lengths of periodic orbits}

The first ingredient is the following consequence of the structural stability of Anosov flows:
\begin{lemm}\label{l:variation-length} Assume that $X_0\in\mc{A}$. There exists a neighborhood $\ml{U}(X_0)$ of $X_0$ such that
$\tau\mapsto X_\tau\in \ml{U}(X_0)$ is a smooth family of Anosov vector fields on $\mc{M}$ topologically conjugated to $X_0$. Moreover, there 
is a smooth family $\tau\mapsto h_\tau\in C^0(\mc{M},\mc{M})$ of  conjugating homeomorphisms 
defined near $\tau=0$ such that $h_{\tau}(\gamma_{0})=\gamma_\tau$ for each $\gamma_{0}\in \mc{P}_{0}$, the map
$\tau\mapsto \ell(\gamma_\tau)=\ell(h(\gamma_{0}))$ is $C^1$ near $0$ for each $\gamma_{0}\in\mc{P}_{0}$, and 
\[ \pl_\tau \ell(\gamma_\tau)=-\int_{\gamma_\tau}q_\tau\]
if $\pl_\tau X_{\tau}=q_{\tau}X_{\tau}+X_{\tau}^{\perp},$
 with $X_{\tau}^{\perp}\in C^0(\mc{M};E_u(X_{\tau})\oplus E_s(X_{\tau})).$
\end{lemm}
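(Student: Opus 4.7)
The plan is to combine openness of $\mc{A}$, the classical structural stability of Anosov flows (which furnishes the orbit equivalence $h_\tau$), and a Poincaré-section plus implicit function theorem computation for the derivative of the period.

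The openness of $\mc{A}$ in $C^\infty(\mc{M};T\mc{M})$, already recorded at the beginning of Section~\ref{s:preliminaries}, yields the neighborhood $\ml{U}(X_0)$ on which each $X_\tau$ is Anosov. Structural stability of Anosov flows then produces homeomorphisms $h_\tau:\mc{M}\to\mc{M}$ close to the identity which send $X_0$-orbits to $X_\tau$-orbits, and therefore induce a bijection $\mc{P}_0\to\mc{P}_\tau$, $\gamma_0\mapsto\gamma_\tau$. The smoothness $\tau\mapsto h_\tau\in C^0(\mc{M},\mc{M})$ is obtained by recasting the orbit-equivalence condition as a fixed point equation in a Banach space of continuous sections of a suitable normal bundle and applying the implicit function theorem; this is the most delicate technical point of the lemma.

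For the derivative of the period, rather than relying on fine regularity of $h_\tau$, I would argue orbit by orbit. Fix $\gamma_0\in\mc{P}_0$, $p_0\in\gamma_0$, and a smooth hypersurface $\Sigma\ni p_0$ transverse to $X_0$. For $\tau$ small the first-return map $P_\tau:\Sigma\to\Sigma$ of $X_\tau$ is defined and smooth near $p_0$, and $p_0$ is a hyperbolic fixed point of $P_0$. The implicit function theorem produces smooth curves $\tau\mapsto p_\tau\in\Sigma$ and $\tau\mapsto\ell_\tau=\ell(\gamma_\tau)$ satisfying $\varphi_{\ell_\tau}^\tau(p_\tau)=p_\tau$. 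Differentiating this identity at $\tau=0$ yields
\[ V(\ell_0)+\dot\ell\,X_0(p_0)+D\varphi_{\ell_0}^0(p_0)\dot p=\dot p,\]
with $V(t):=\partial_\tau\varphi_t^\tau(p_0)|_{\tau=0}$ and $\dot p:=\partial_\tau p_\tau|_{\tau=0}\in T_{p_0}\Sigma$.

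The classical variation-of-parameters formula for the linearised flow gives
\[ V(t)=\int_0^t D\varphi_{t-s}^0(\varphi_s^0(p_0))\,\partial_\tau X_\tau|_{\tau=0}(\varphi_s^0(p_0))\,ds.\]
Plugging in $\partial_\tau X_\tau|_{\tau=0}=q_0 X_0+X_0^\perp$, and using the identities $D\varphi_r^0(y)X_0(y)=X_0(\varphi_r^0(y))$ together with $\varphi_{\ell_0}^0(p_0)=p_0$, one finds
\[ V(\ell_0)=\Big(\int_{\gamma_0}q_0\Big)X_0(p_0)+w^\perp,\qquad w^\perp\in E_u(X_0,p_0)\oplus E_s(X_0,p_0),\]
the last inclusion coming from flow-invariance of $E_u\oplus E_s$. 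Since $D\varphi_{\ell_0}^0(p_0)$ fixes $X_0(p_0)$ and preserves $E_u(X_0,p_0)\oplus E_s(X_0,p_0)$, projecting the main identity onto $\rr X_0(p_0)$ along the hyperbolic complement makes the $\dot p$ contributions cancel and yields $\dot\ell=-\int_{\gamma_0}q_0$. Applying the same argument at arbitrary $\tau\in(-1,1)$ gives the stated formula.
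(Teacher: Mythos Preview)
Your proof is correct. The approach differs from the paper's in the treatment of the period and its derivative, though the underlying ideas are close.

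The paper invokes the structural stability theorem of De la Llave--Marco--Moriy\'on, which produces not only the homeomorphism $h_\tau$ but also a time-change function $\theta_\tau\in C^0(\mc{M})$ depending \emph{smoothly} on $\tau$ (in the $C^0$-topology), with $\pl_t(h_\tau\circ\varphi^0_t(x))|_{t=0}=\theta_\tau(x)X_\tau(h_\tau(x))$. This gives the explicit formula $\ell(\gamma_\tau)=\int_{\gamma_0}\theta_\tau$, making $C^\infty$-regularity of the period immediate and also yielding uniform bounds like $\tfrac12\ell(\gamma_0)\leq\ell(\gamma_\tau)\leq 2\ell(\gamma_0)$ for all orbits simultaneously. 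The derivative formula is then obtained by differentiating $h_\tau(x_0)=\varphi^\tau_{\ell(\gamma_\tau)}(h_\tau(x_0))$ and pairing with the projection $\beta_{x_0}$ onto $\rr X_0$ along $E_u\oplus E_s$; the variation-of-parameters identity appears in an equivalent form (computing $\pl_t[(d\varphi^0_t)^{-1}\pl_\tau\varphi^\tau_t|_{\tau=0}]$).

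Your Poincar\'e-section argument is more elementary for the single-orbit statements: the implicit function theorem on the smooth return map gives $C^\infty$-regularity of $\tau\mapsto(p_\tau,\ell_\tau)$ directly, without the heavy input of a smooth structural-stability map, and your Duhamel computation for $V(\ell_0)$ is the same projection-onto-$\rr X_0$ step written more transparently. The cancellation of the $\dot p$-terms is indeed valid because $D\varphi^0_{\ell_0}(p_0)$ fixes $X_0(p_0)$ and preserves $E_u\oplus E_s$, so $D\varphi^0_{\ell_0}(p_0)\dot p-\dot p$ lies in $E_u\oplus E_s$ regardless of whether $\dot p$ itself does. What the paper's route buys is the global conjugacy $h_\tau$ with smooth $\tau$-dependence as part of the statement (used later, e.g.\ for uniform period estimates), whereas your approach treats orbits one at a time and defers the smoothness of $\tau\mapsto h_\tau$ to the same implicit-function-theorem black box.
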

\begin{proof}
We consider the Anosov vector field $X_0$.
Following~\cite[App.~A]{DllMaMo86}, we introduce the space $C_{X_{0}}(\ml{M},\ml{M})$ of continuous functions $h$ from $\ml{M}$ 
to $\ml{M}$ which are $C^1$ along $X_0$. This means that, for all $x$ in $\ml{M}$, the map $t\mapsto h\circ\varphi_{X_0}^t(x)$ is $C^1$ and 
the map $x\mapsto \frac{d}{dt}\left(h\circ\varphi_{t}^{X_0}(x)\right)_{t=0}=:D_{X_0}h(x)\in T\ml{M}$ is continuous. Building on earlier arguments of 
Moser and Mather for Anosov diffeomorphisms, de la Llave, Marco and Moriyon proved the structural stability theorem of Anosov via an implicit function 
theorem~\cite[App.~A]{DllMaMo86}. 
\begin{prop}[De la Llave-Marco-Moriyon \cite{DllMaMo86}]\label{t:structural} 
With the previous conventions, there exists an open neighborhood $\ml{U}(X_0)$ of $X_0$ in $\mc{A}$ and a $\ml{C}^{\infty}$ map
\[S: X\in \ml{U}(X_0)\mapsto (h_X,\theta_X)\in C_{X_0}(\ml{M},\ml{M})\times C^0(\mc{M},\IR),\]
where $S(X_0)=({\rm Id},1)$ and
\[\pl_t(h_X(\varphi_t^{0}(x)))|_{t=0}=\theta_X(x) X(h_X(x)), \quad \forall x\in \mc{M}\] 
if $\varphi^0_t$ is the flow of $X_0$. Moreover, $h_X$ is a homeomorphism of $\mc{M}$ for each $X$.
\end{prop}
We take a connected component of the curve $X_\tau$ lying in $\mc{U}(X_0)$, which amounts to consider $X_\tau$ for $|\tau|<\delta$ with $\delta>0$ small enough.
Writing the flow of $X_\tau$ by $\varphi^\tau_t$ and $h_\tau:=h_{X_\tau}$, $\theta_\tau:=\theta_{X_\tau}$,  this result can be rewritten in an integrated version:
\[\forall x\in \ml{M},\quad h_{\tau}(\varphi_t^{0}(x))=
\varphi^\tau_{\int_0^t\theta_\tau\circ\varphi_s^{0}(x)ds}(h_\tau(x)).\]
Fix now a primitive closed orbit $\gamma_0$ of the flow $\varphi_t^0$ (with period 
$\ell(\gamma_0)$) and fix a point $x_0$ on this orbit. 
From the previous formula, one has
\[h_\tau(x_0)=\varphi^\tau_{\int_0^{\ell(\gamma_0)}\theta_\tau\circ\varphi_s^{0}(x_0)ds}\left( h_\tau(x_0)\right).\]
In particular, the period of the closed orbit for $X_\tau$ equals
\[\ell(\gamma_{\tau})=\int_{\gamma_0}\theta_{\tau}\in C^{\infty}((-\delta,\delta),\IR_+^*).\]
Let us now compute its derivative by differentiating $h_{\tau}(x_{0})=\varphi_{\ell(\gamma_{\tau})}^{\tau}(h_{\tau}(x_{0}))$ at $\tau=0$:
\begin{align}\label{ss1}
	\left(\frac{\partial h_{\tau}}{\partial \tau}(x_{0})\right)_{|\tau=0}=\frac{\partial  
	}{\partial 
	\tau}\varphi^{\tau}_{\ell(\gamma_{0})}(x_{0})_{|\tau=0}+\partial_{\tau}\ell(\gamma_{\tau})_{|\tau=0}X_{0}(x_{0})+d\varphi^{0}_{\ell(\gamma_{0})}(x_{0})\cdot \left(\frac{\partial h_{\tau}}{\partial \tau}(x_{0})\right)_{|\tau=0}.
\end{align}
Let $\beta_{x_{0}}:T_{x_{0}}\mathcal{M}\to \mathbb{R}$ be defined 
such that, if $V\in T_{x_{0}}\mathcal{M}$, then 
$V=\beta_{x_{0}}(V)X_{0}(x_{0})+V^{\bot}$ 
where $V^{\bot}\in E_{u,x_{0}}(X_{0})\oplus 
E_{s,x_{0}}(X_{0})$. Pairing \eqref{ss1} with $\beta_{x_{0}}$, we get
\begin{align}\label{ss2}
	\partial_{\tau}\ell(\gamma_{\tau})_{|\tau=0}=-\beta_{x_{0}}\left(\frac{\partial  
	}{\partial 
	\tau}\varphi^{\tau}_{\ell(\gamma_{0})}(x_{0})_{|\tau=0}\right).
\end{align}
Since $\beta_{x_{0}}$ is 
$d\varphi^{0}_{\ell(\gamma_{0})}(x_{0})$ invariant, we 
have 
\begin{multline}
	\beta_{x_{0}}\left(\frac{\partial  
	}{\partial 
	\tau}\varphi^{\tau}_{\ell(\gamma_{0})}(x_{0})_{|\tau=0}\right)=\beta_{x_{0}}\left(\big(d\varphi^{0}_{\ell(\gamma_{0})}(x_{0})\big)^{-1}\cdot\frac{\partial  
	}{\partial 
	\tau}\varphi^{\tau}_{\ell(\gamma_{0})}(x_{0})_{|\tau=0}\right)\\
	=\int_{0}^{\ell(\gamma_{0})}\frac{d}{d 
	t}\beta_{x_{0}}\left(\big(d\varphi^{0}_{t}(x_{0})\big)^{-1}\cdot\frac{\partial  
	}{\partial 
	\tau}\varphi^{\tau}_{t}(x_{0})_{|\tau=0}\right)dt.
\end{multline}
On the other hand, we have
$$\frac{\partial}{\partial t}	\left(\big(d\varphi^{0}_{t}(x_{0})\big)^{-1}\cdot\frac{\partial  
	}{\partial 
	\tau}\varphi^{\tau}_{t}(x_{0})_{|\tau=0}\right)
	=d\varphi_t^0(x_0)^{-1}\frac{\partial^2}{\partial s\partial\tau}
	\left(\varphi^0_{-s}\circ\varphi_{t+s}^{\tau}(x_0)\right)_{|(s,\tau)=0},$$
and $\frac{\partial}{\partial s}(\varphi^0_{-s}\circ\varphi_{t+s}^{\tau}(x_0))=-X_0(\varphi^0_{-s}\circ\varphi_{t+s}^{\tau}(x_0))+
X_{\tau}(\varphi_{-s}^0\circ\varphi_{t+s}^{\tau}(x_0))+\ml{O}(s).$
Hence, one finds
\begin{align}\label{ss3}
\frac{\partial}{\partial t}	\left(\big(d\varphi^{0}_{t}(x_{0})\big)^{-1}\cdot\frac{\partial  
	}{\partial 
	\tau}\varphi^{\tau}_{t}(x_{0})_{|\tau=0}\right)=\big(d\varphi^{0}_{t}(x_{0})\big)^{-1}\cdot\left(\frac{\partial  X_\tau
	}{\partial 
	\tau}\big(\varphi^{0}_{t}(x_{0})\big)\right)_{|\tau=0}. 
\end{align}
By \eqref{ss2}-\eqref{ss3} and by the invariance of the Anosov splitting, 
we get the desired equation (the same argument works at each $\tau$ instead of 
$\tau=0$). 
\end{proof}
\begin{rem}\label{r:variation-period} A consequence Lemma \ref{l:variation-length}  is that, for every $\gamma_{0}\in \mc{P}_{0}$, one has
$$ \frac{\ell(\gamma_0)}{2}\leq \ell(\gamma_\tau)\leq 2\ell(\gamma_0),$$
provided that $\ml{U}(X_0)$ is chosen small enough (independently of the closed orbit).
\end{rem}

\subsection{Variation of Ruelle zeta function in the convergence region}

We start with the following result which is a consequence of Lemma~\ref{l:variation-length}.
\begin{lemm}\label{l:regularity-physical-region} Under the above assumptions, there exist $\tau_0>0$ and 
$C_0>0$ such that $X_{\tau}\in\ml{U}(X_0)$ for every $\tau\in(-\tau_0,\tau_0)$ and such that 
the map
\[\tau\in(-\tau_0,\tau_0)\mapsto \zeta_{\tau,\rho}(.)\in {\rm Hol}(\Omega_0)\]
 is of class $C^1$ where $\Omega_0:=\{\operatorname{Re}(\la)>C_0\}.$ Moreover, for every $\tau\in(-\tau_0,\tau_0)$
 $$\zeta_{\tau,\rho}(\lambda)=\zeta_{0,\rho}(\lambda)\exp\left(-\lambda\int_0^{\tau}\sum_{\gamma_{\tau'}}\frac{\ell^{\sharp}(\gamma_{\tau'})}{\ell(\gamma_{\tau'})}
 \left(\int_{\gamma_{\tau'}}q_{\tau'}\right)e^{-\lambda\ell(\gamma_{\tau'})}\varepsilon_{\gamma_{\tau'}}\operatorname{Tr}(\rho([\gamma_{\tau'}]))d\tau'\right),$$
 where the sum runs over all closed orbits of $X_{\tau'}$, $\ell^{\sharp}(\gamma_{\tau'})$ is the period of the primitive orbit generating $\gamma_{\tau'}$, $\varepsilon_{\gamma_{\tau'}}$ is the orientation 
 index\footnote{For a nonprimitive orbit $k.\gamma$, this is equal to $\varepsilon_{k.\gamma}=\varepsilon_{\gamma}^k$.} of $\gamma_{\tau'}$ and
\[\int_{\gamma_{\tau'}}q_{\tau'}=\int_0^{\ell(\gamma_{\tau'})}q_{\tau'}\circ\varphi_{t}^{\tau'}dt.\]
\end{lemm}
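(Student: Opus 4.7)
The plan is to differentiate $\log \zeta_{\tau,\rho}(\lambda)$ termwise using the variation formula from Lemma \ref{l:variation-length}, and then integrate back over $\tau$.

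First, I would choose $\tau_0>0$ small enough so that the path $\tau\in(-\tau_0,\tau_0)\mapsto X_\tau$ stays inside the neighborhood $\mc{U}(X_0)$ given by Proposition \ref{t:structural}. Combined with Remark \ref{r:variation-period}, this yields a uniform upper bound $\ell(\gamma_\tau)\leq 2\ell(\gamma_0)$ and a uniform lower bound, and more importantly it furnishes a uniform bound $h_{\rm top}^\tau\leq h_{\rm top}^0+\epsi$ on the topological entropy. Picking $C_0>h_{\rm top}^0+\epsi+C_\rho$, where $C_\rho$ comes from the Hermitian bound $\|\rho([\gamma])\|\leq e^{C_\rho \ell(\gamma)}$, guarantees via Margulis' estimate \eqref{e:margulis} that the series below converge absolutely and uniformly on $\Omega_0=\{\Re\lambda>C_0\}$ uniformly in $\tau\in(-\tau_0,\tau_0)$. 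Since $h_\tau$ is a homeomorphism isotopic to the identity, the free-homotopy class $[\gamma_\tau]=[h_\tau(\gamma_0)]=[\gamma_0]$, hence $\rho([\gamma_\tau])$ is independent of $\tau$; similarly $\varepsilon_{\gamma_\tau}$ is locally constant because the unstable bundle $E_u(X_\tau)$ varies continuously in $\tau$.

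Next I would expand the logarithm of the product. Using $\log\det(1-A)=-\sum_{k\geq 1}\tfrac{1}{k}\Tr(A^k)$ and reindexing in terms of all closed orbits $\gamma_\tau$ (primitive or not) via $k=\ell(\gamma_\tau)/\ell^\sharp(\gamma_\tau)$, one obtains
\[
\log\zeta_{\tau,\rho}(\lambda)=-\sum_{\gamma_\tau}\frac{\ell^\sharp(\gamma_\tau)}{\ell(\gamma_\tau)}\,\varepsilon_{\gamma_\tau}\,e^{-\lambda\ell(\gamma_\tau)}\,\Tr(\rho([\gamma_\tau])).
\]
Since $\ell^\sharp(\gamma_\tau)$, $\varepsilon_{\gamma_\tau}$ and $\Tr(\rho([\gamma_\tau]))$ are constant in $\tau$, only the factors $e^{-\lambda\ell(\gamma_\tau)}$ depend on $\tau$. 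Differentiating termwise and using Lemma \ref{l:variation-length}, namely $\partial_\tau\ell(\gamma_\tau)=-\int_{\gamma_\tau}q_\tau$, gives
\[
\partial_\tau\log\zeta_{\tau,\rho}(\lambda)=-\lambda\sum_{\gamma_\tau}\frac{\ell^\sharp(\gamma_\tau)}{\ell(\gamma_\tau)}\Big(\int_{\gamma_\tau}q_\tau\Big)e^{-\lambda\ell(\gamma_\tau)}\varepsilon_{\gamma_\tau}\Tr(\rho([\gamma_\tau])).
\]

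To justify the termwise differentiation (which is the main technical point) I would show the series of $\tau$-derivatives converges uniformly on compact subsets of $\Omega_0\times(-\tau_0,\tau_0)$. The bound $|\int_{\gamma_\tau}q_\tau|\leq \ell(\gamma_\tau)\|q_\tau\|_{C^0}\leq 2\ell(\gamma_0)\sup_{|\tau|\leq\tau_0}\|q_\tau\|_{C^0}$ provides only a linear polynomial factor in $\ell(\gamma_0)$, which is absorbed by the exponential decay $e^{-\Re(\lambda)\ell(\gamma_\tau)/2}$ once $\Re\lambda>C_0$; combined with Margulis \eqref{e:margulis} this gives the required normal convergence, so the sum is a $C^1$ function of $\tau$ valued in ${\rm Hol}(\Omega_0)$.

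Finally, integrating the equality for $\partial_\tau\log\zeta_{\tau,\rho}$ from $0$ to $\tau$ and exponentiating yields the announced product formula. The main obstacle is not conceptual but is the uniform convergence argument: one must verify simultaneous absolute convergence of the series defining $\zeta_{\tau,\rho}$ and of its $\tau$-derivative, uniformly in $\tau\in(-\tau_0,\tau_0)$, which relies crucially on Proposition \ref{t:structural} (bijection of orbits and control of periods), on continuity of the Anosov splitting (constancy of $\varepsilon_{\gamma_\tau}$), and on the uniform entropy/Margulis bound \eqref{e:margulis} on the counting function of closed orbits.
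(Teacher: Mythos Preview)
Your proof is correct and follows essentially the same route as the paper: expand $\log\zeta_{\tau,\rho}$ as an absolutely convergent sum, differentiate termwise using Lemma~\ref{l:variation-length}, justify uniform convergence via the orbit bounds, then integrate. The only cosmetic differences are that the paper keeps the product indexed by primitive orbits and differentiates $\log\det(1-\varepsilon_{\gamma_\tau}e^{-\la\ell(\gamma_\tau)}\rho([\gamma_\tau]))$ directly (rather than first reindexing over all closed orbits), and obtains the uniform $C_0$ by transporting the Margulis bound from $\tau=0$ via the orbit bijection of Remark~\ref{r:variation-period} rather than invoking continuity of the topological entropy; your added remarks on the $\tau$-constancy of $\rho([\gamma_\tau])$ and $\varepsilon_{\gamma_\tau}$ make explicit something the paper uses tacitly.
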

\begin{proof} The fact that $\lambda\mapsto\zeta_{\tau,\rho}(\lambda)$ is holomorphic in some half plane 
$\{\operatorname{Re}(\lambda)>C_{\tau}\}$ was already discussed. The fact that $C_0$ can be chosen uniformly in $\tau$ follows from Lemma~\ref{l:variation-length} and Remark~\ref{r:variation-period} together with \eqref{e:margulis} at $\tau=0$.
 Let us now compute the derivative with respect to the parameter $\tau$. For that purpose, we compute the derivative of each term in the sum defining 
$\log\zeta_{\tau,\rho}(.)$. Precisely, we write
\[\pl_\tau \left(\log\text{det}\left(\text{Id}-\varepsilon_{\gamma_\tau}e^{-\lambda\ell(\gamma_\tau)}\rho([\gamma_\tau])\right)\right) =\lambda 
\pl_\tau \ell(\gamma_\tau)\sum_{k=1}^{+\infty}e^{-k\lambda\ell(\gamma_\tau)}\varepsilon_{\gamma_\tau}^k\text{Tr}(\rho([\gamma_\tau])^k).
\]
The same kind of considerations as above allows to verify that the sum of this quantity over all primitive orbits is a 
continuous map from $(-\tau_0,\tau_0)$ to ${\rm Hol}(\Omega_0)$. Hence, 
the map $\tau\in(-\tau_0,\tau_0)\mapsto \ln\zeta_{\tau,\rho}(.)\in{\rm Hol}(\Omega_0)$ is $C^1$ with a derivative given by
\[\pl_\tau  \log\zeta_{\tau,\rho}(\lambda)=\lambda\sum_{\gamma\in \mc{P}_\tau}
\pl_\tau \ell(\gamma_\tau)\sum_{k=1}^{+\infty}e^{-\lambda k\ell(\gamma_\tau)}\varepsilon_{\gamma_\tau}^k\text{Tr}(\rho([\gamma_\tau])^k).\]
It remains to integrate this expression between $0$ and $\tau$ and use Lemma~\ref{l:variation-length}.
\end{proof}

One of the technical issue with the formula of Lemma~\ref{l:regularity-physical-region} is that $q_{\tau}$ is in general $C^0$ (or H\"older), and it makes it difficult to relate it with distributional traces as in~\cite{GLP, DyZw13}. To bypass this problem we introduce an invertible smooth bundle map $S_{\tau}: T\mc{M}\rightarrow T\mc{M}$ such that $S_\tau(X_0)=X_\tau$ and
\begin{equation}\label{Atauk}
\forall 0\leq k\leq n,\quad A_{\tau}^{(k)}:=\pl_\tau(\wedge^k 
S_{\tau})\,\left(\wedge^k S_{\tau}^{-1}\right):\wedge^k(T\mc{M})\rightarrow \wedge^k(T\mc{M}).
\end{equation}
\begin{rem} In order to get an intuition on the introduction of this extra-operator, let us observe that $S_{\tau}$ will play in some sense the role of the Hodge-star map in the analytical definition by Ray and Singer of torsion~\cite{RaSi71}. Indeed, one has $\mathbf{X}_{\tau}:=\iota_{X_{\tau}}d^{\nabla}+d^{\nabla}\iota_{X_{\tau}},$ with $\iota_{X_{\tau}}$ which can be rewritten as $\iota_{X_{\tau}}=\left(\wedge (S_{\tau}^T)^{-1}\right)\circ\iota_X\circ\left(\wedge S_{\tau}^T\right).$ Note also that, using the conventions of Lemma~\ref{l:variation-length}, one has $A_{\tau}^{(1)}(X_\tau)=q_\tau X_\tau+X_\tau^\perp.$
\end{rem}

Our next Lemma allows to express the variation of the Ruelle zeta function in terms of this bundle map $A_\tau^{(k)}$ instead of the continuous function $q_\tau$:
\begin{lemm}\label{l:shu-trick} 
With the conventions of Lemma~\ref{l:regularity-physical-region}, one has, for every $\tau_1\in(-\tau_0,\tau_0)$, for 
every closed orbit $\gamma_{\tau_1}$ and for every $x\in\gamma_{\tau_1}$,
 $$q_{\tau_1}(x)
 =-\frac{1}{\operatorname{det}\left(\operatorname{Id}-P(\gamma_{\tau_1})\right)}
 \sum_{k=0}^n(-1)^k\operatorname{Tr}\left(A_{\tau_1}^{(k)}(x)\left(\wedge^k d\varphi^{\tau_1}_{\ell(\gamma_{\tau_1})}(x)\right)\right),$$
 where $P(\gamma_{\tau_1})=d\varphi_{\ell(\gamma_{\tau_1})}^{\tau_1}(x)|_{E_u(X_{\tau_1})\oplus E_s(X_{\tau_1})}$ is the linearized 
 Poincar\'e map at $x\in\gamma_{\tau_1}$.
\end{lemm}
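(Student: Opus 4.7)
The proof is a purely pointwise linear-algebraic identity, assembled from two ingredients: (i) the defining property of $S_\tau$ applied to $X_0$, and (ii) a block computation on $\wedge^k T_x\mc{M}$ at $x\in\gamma_\tau$. The plan is as follows.

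First I would differentiate the identity $S_\tau X_0 = X_\tau$ in $\tau$ to obtain $A_\tau^{(1)}(X_\tau(x)) = (\partial_\tau S_\tau)(S_\tau^{-1}X_\tau(x)) = \partial_\tau X_\tau(x)$. Combined with the decomposition $\partial_\tau X_\tau = q_\tau X_\tau + X_\tau^\perp$ of Lemma~\ref{l:variation-length}, this gives $A_\tau^{(1)}(X_\tau(x)) = q_\tau(x) X_\tau(x) + X_\tau^\perp(x)$ with $X_\tau^\perp(x)\in (E_u(X_\tau)\oplus E_s(X_\tau))_x$. Moreover, since $\wedge^k S_\tau$ acts on $k$-vectors diagonally, the derivative formula yields that $A_\tau^{(k)}$ is the derivation extension of $A_\tau^{(1)}$, namely $A_\tau^{(k)}(v_1\wedge\cdots\wedge v_k) = \sum_j v_1\wedge\cdots\wedge A_\tau^{(1)}v_j\wedge\cdots\wedge v_k$.

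Next I would fix $x\in\gamma_\tau$, set $P:=d\varphi^\tau_{\ell(\gamma_\tau)}(x)$, and use the $P$-invariant splitting $T_x\mc{M}=\IR X_\tau(x)\oplus F$ with $F:=(E_u\oplus E_s)_x$, on which $P$ equals $\mathrm{Id}$ and $P(\gamma_\tau)$ respectively. The induced splitting
\[ \wedge^k T_x\mc{M} = \wedge^k F \;\oplus\; X_\tau(x)\wedge\wedge^{k-1}F \]
is preserved by $\wedge^k P$, acting as $\wedge^k P(\gamma_\tau)$ on the first summand and as $\mathrm{Id}\otimes\wedge^{k-1}P(\gamma_\tau)$ on the second. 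Writing $A:=A_\tau^{(1)}$ as $Av = \alpha(v)X_\tau(x)+Bv$ with $B:F\to F$ for $v\in F$, and $AX_\tau(x)=q_\tau(x)X_\tau(x)+X_\tau^\perp(x)$, a direct check (using $X_\tau\wedge X_\tau=0$ for the second summand) shows that the diagonal blocks of $A_\tau^{(k)}$ are $B^{(k)}$ on $\wedge^k F$ and $q_\tau(x)\mathrm{Id}+\mathrm{Id}_{X_\tau(x)}\otimes B^{(k-1)}$ on $X_\tau(x)\wedge\wedge^{k-1}F$. Since $\wedge^k P$ is block-diagonal, only these diagonal blocks contribute to $\operatorname{Tr}(A_\tau^{(k)}\wedge^k P)$, giving
\[ \operatorname{Tr}(A_\tau^{(k)}\wedge^k P) = \operatorname{Tr}(B^{(k)}\wedge^k P(\gamma_\tau)) + q_\tau(x)\operatorname{Tr}(\wedge^{k-1}P(\gamma_\tau)) + \operatorname{Tr}(B^{(k-1)}\wedge^{k-1}P(\gamma_\tau)). \]

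Finally I would take the alternating sum $\sum_k(-1)^k$. After a shift of index $k\mapsto k-1$, the first and third sums cancel exactly, leaving
\[ \sum_{k=0}^n (-1)^k \operatorname{Tr}(A_\tau^{(k)}\wedge^k P) = -q_\tau(x)\sum_{k=0}^{n-1}(-1)^{k}\operatorname{Tr}(\wedge^{k}P(\gamma_\tau)) = -q_\tau(x)\det(\mathrm{Id}-P(\gamma_\tau)), \]
by the standard identity $\det(\mathrm{Id}-M)=\sum_k(-1)^k\operatorname{Tr}(\wedge^k M)$. Dividing by $-\det(\mathrm{Id}-P(\gamma_\tau))$, which is nonzero by hyperbolicity of $P(\gamma_\tau)$, yields the lemma. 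No step is genuinely hard; the only real care is needed in identifying the diagonal blocks of $A_\tau^{(k)}$ correctly, for which the vanishing $X_\tau(x)\wedge X_\tau(x)=0$ is essential.
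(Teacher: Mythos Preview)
Your proof is correct. Both your argument and the paper's rest on the same two facts --- that $A_\tau^{(1)}X_\tau=q_\tau X_\tau+X_\tau^\perp$ and that $d\varphi^\tau_{\ell(\gamma_\tau)}(x)$ is block-diagonal for the splitting $\rr X_\tau\oplus (E_u\oplus E_s)$ --- but they are organised differently. You work directly with the sum $\sum_k(-1)^k\Tr(A_\tau^{(k)}\wedge^kP)$: you identify the diagonal blocks of the derivation $A_\tau^{(k)}$ on $\wedge^kF\oplus (X_\tau\wedge\wedge^{k-1}F)$ and then telescope by hand. The paper instead introduces the auxiliary determinant $\det(\mathrm{Id}-S_\tau S_{\tau_1}^{-1}d\varphi^{\tau_1}_{\ell(\gamma_{\tau_1})}(x))$, which vanishes at $\tau=\tau_1$, and computes its $\tau$-derivative in two ways: once via the block decomposition (yielding $-q_{\tau_1}\det(\mathrm{Id}-P(\gamma_{\tau_1}))$), and once via the expansion $\det(\mathrm{Id}-M)=\sum_k(-1)^k\Tr(\wedge^kM)$ (yielding the trace sum). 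In the paper's version the telescoping you perform is implicit in the product rule for the determinant, which makes the argument slightly shorter; your version is more explicit and makes the cancellation mechanism visible. Neither approach has any real advantage beyond taste.
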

\begin{proof} Fix $\tau_1$ in $(-\tau_0,\tau_0)$ and $x$ belonging to a closed orbit $\gamma_{\tau_1}$. Write
$$\frac{\text{det}\left(\text{Id}-S_{\tau}S_{\tau_1}^{-1}d\varphi^{\tau_1}_{\ell(\gamma_{\tau_1})}(x)\right)}{\text{det}\left(\text{Id}-P(\gamma_{\tau_1})\right)}
=
\frac{\text{det}\left(\text{Id}-d\varphi^{\tau_1}_{\ell(\gamma_{\tau_1})}(x)- 
(S_{\tau}-S_{\tau_1})S_{\tau_1}^{-1}d\varphi^{\tau_1}_{\ell(\gamma_{\tau_1})}(x)\right)}{\text{det}(\text{Id}-P(\gamma_{\tau_1}))}.$$
We now  differentiate this expression at $\tau=\tau_1$. We have
$$(S_{\tau}-S_{\tau_1})S_{\tau_1}^{-1}=(\tau-\tau_1)\left(\frac{d S_{\tau}}{d\tau}\right)_{|\tau=\tau_1}S_{\tau_1}^{-1}+\ml{O}((\tau-\tau_1)^2).$$
Observe now that $\left(\frac{d S_{\tau}}{d\tau}\right)_{|\tau=\tau_1}S_{\tau_1}^{-1}(X_{\tau_1})=\left(\frac{d X_{\tau}}{d\tau}\right)_{|\tau=\tau_1}$.
Hence, one finds
$$q_{\tau_1}=-\frac{d}{d\tau}
\left(\frac{\text{det}\left(\text{Id}-S_{\tau}S_{\tau_1}^{-1}d\varphi^{\tau_1}_{\ell(\gamma_{\tau_1})}(x)\right)}{\text{det}\left(\text{Id}-P(\gamma_{\tau_1})\right)}
\right)_{|\tau=\tau_1}$$
by using the decomposition $\rr X_{\tau_1}\oplus E_s(X_{\tau_1})\oplus E_u(X_{\tau_1})$.
On the other hand,
$$\text{det}\left(\text{Id}-S_{\tau}S_{\tau_1}^{-1}d\varphi^{\tau_1}_{\ell(\gamma_{\tau_1})}(x)\right)=
\sum_{k=0}^n(-1)^k\text{Tr}\left(\wedge^k\left(S_{\tau}S_{\tau_1}^{-1}d\varphi^{\tau_1}_{\ell(\gamma_{\tau_1})}(x)\right)\right).$$
Differentiating this expression at $\tau=\tau_1$, this yields
$$q_{\tau_1}=-\frac{1}{\operatorname{det}\left(\operatorname{Id}-P(\gamma_{\tau_1})\right)}\sum_{k=0}^n(-1)^k\text{Tr}\left(\frac{d}{d\tau}\left(\wedge^k\left(S_{\tau}S_{\tau_1}^{-1}d\varphi^{\tau_1}_{\ell(\gamma_{\tau_1})}(x)\right)\right)_{|\tau=\tau_1}\right),$$
from which the conclusion follows.
\end{proof}
Combining Lemma~\ref{l:regularity-physical-region} and Lemma~\ref{l:shu-trick}, we get 
\begin{corr}\label{corrolairevar}
With the conventions of Lemma~\ref{l:regularity-physical-region}, one has, for every $\tau\in(-\tau_0,\tau_0)$ and for $\lambda \in \Omega_0$ 
\[
\frac{\zeta_{\tau,\rho}(\lambda)}{\zeta_{0,\rho}(\lambda)}=\exp\left(-\lambda\int_0^{\tau}\sum_{k=0}^n(-1)^{k}\sum_{\gamma_{\tau'}}\frac{\ell^\sharp(\gamma_{\tau'})}{\ell(\gamma_{\tau'})}
 \frac{\left(\int_{\gamma_{\tau'}}{\rm Tr}\left(A_{\tau'}^{(k)}\left(\wedge^k 
 d\varphi^{\tau'}_{\ell(\gamma_{\tau'})}\right)\right)\right)}{|\det({\rm 
 Id}-P(\gamma_{\tau'}))|e^{\lambda \ell(\gamma_{\tau'})}} 
 {\rm Tr}(\rho([\gamma_{\tau'}]))d\tau'\right).\] 
\end{corr}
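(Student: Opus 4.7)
The plan is a direct substitution of Lemma~\ref{l:shu-trick} into the variation formula of Lemma~\ref{l:regularity-physical-region}. Starting from
\[\zeta_{\tau,\rho}(\lambda)=\zeta_{0,\rho}(\lambda)\exp\left(-\lambda\int_0^{\tau}\sum_{\gamma_{\tau'}}\frac{\ell^{\sharp}(\gamma_{\tau'})}{\ell(\gamma_{\tau'})}
\left(\int_{\gamma_{\tau'}}q_{\tau'}\right)e^{-\lambda\ell(\gamma_{\tau'})}\varepsilon_{\gamma_{\tau'}}\operatorname{Tr}(\rho([\gamma_{\tau'}]))\,d\tau'\right),\]
valid for $\lambda\in\Omega_0$, I would observe that the linearized Poincar\'e map $P(\gamma_{\tau'})$ is defined up to conjugation, so $\det(\operatorname{Id}-P(\gamma_{\tau'}))$ depends only on the orbit $\gamma_{\tau'}$ and may be pulled out of the integral along $\gamma_{\tau'}$. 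Substituting the pointwise expression for $q_{\tau'}$ furnished by Lemma~\ref{l:shu-trick} then yields
\[\int_{\gamma_{\tau'}}q_{\tau'}=-\frac{1}{\det(\operatorname{Id}-P(\gamma_{\tau'}))}\sum_{k=0}^{n}(-1)^{k}\int_{\gamma_{\tau'}}\operatorname{Tr}\bigl(A_{\tau'}^{(k)}\wedge^{k}d\varphi^{\tau'}_{\ell(\gamma_{\tau'})}\bigr).\]

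Next I would exchange the finite sum $\sum_{k=0}^{n}(-1)^{k}$ with the sum over closed orbits (primitive and iterated). This exchange is legitimate throughout $\Omega_0$: the Margulis bound~\eqref{e:margulis}, the uniform period bounds from Remark~\ref{r:variation-period}, and the exponential factor $e^{-\lambda\ell(\gamma_{\tau'})}$ together ensure absolute convergence of the resulting double series, exactly as in the proof of Lemma~\ref{l:regularity-physical-region}.

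The only remaining step is to rewrite the combination $-\varepsilon_{\gamma_{\tau'}}/\det(\operatorname{Id}-P(\gamma_{\tau'}))$ as $1/|\det(\operatorname{Id}-P(\gamma_{\tau'}))|$. Splitting $P(\gamma_{\tau'})$ along $E_u(X_{\tau'})\oplus E_s(X_{\tau'})$, the factor $\det(\operatorname{Id}-P|_{E_s})$ is always strictly positive because the spectrum of $P|_{E_s}$ lies strictly inside the unit disc (real eigenvalues in $(-1,1)$ contribute positive factors $1-\lambda$, and each complex conjugate pair $\lambda,\bar{\lambda}$ contributes $|1-\lambda|^{2}>0$), so any sign comes from $\det(\operatorname{Id}-P|_{E_u})$. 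A short case analysis of the real eigenvalues of $P|_{E_u}$ (which lie in $(-\infty,-1)\cup(1,+\infty)$) identifies this sign in terms of the orientation index $\varepsilon_{\gamma_{\tau'}}$, producing the desired conversion. Plugging it back gives the stated formula. The proof is essentially formal bookkeeping on the two preceding lemmas and presents no genuine obstacle.
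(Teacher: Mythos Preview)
Your proposal is correct and mirrors the paper's approach exactly: the corollary is stated there as the immediate combination of Lemma~\ref{l:regularity-physical-region} and Lemma~\ref{l:shu-trick}, with no further argument given. The only extra step you supply is the sign identity converting $-\varepsilon_{\gamma}/\det(\operatorname{Id}-P(\gamma))$ into $1/|\det(\operatorname{Id}-P(\gamma))|$, which the paper leaves implicit.
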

Note that the reason for the value at $0$ of the twisted Ruelle zeta function being locally constant appears clearly in this last statement. Yet, this formula is only valid for a large real part of $\lambda$ and the rest of the proof consists in showing that this formula meromorphically extends at $\lambda=0$. This is where our hypothesis on Ruelle resonances at $0$ will play a role.

In order to interpret this variation formula spectrally, we already observe that we recognize here the Jacobian terms appearing in
the Guillemin trace formula~\cite[p.~315]{GS77} when we consider the action on the full vector bundle of differential forms of degree $k$ -- see~\eqref{guillemintrace} below.

\begin{rem} As was already said, the reason for the introduction of the bundle map $A_\tau$ was due to the lack of smoothness of $q_\tau$ in the non-contact case. In the contact case, the formula could be written more simply as follows:
$$\frac{\zeta_{\tau,\rho}(\lambda)}{\zeta_{0,\rho}(\lambda)}=\exp\left(\lambda\int_0^{\tau}\sum_{k=0}^n(-1)^{k}k\sum_{\gamma_{\tau'}}\frac{\ell^\sharp(\gamma_{\tau'})}{\ell(\gamma_{\tau'})}
 \frac{\left(\int_{\gamma_{\tau'}}q_{\tau'}\right){\rm Tr}\left(\wedge^k 
 d\varphi^{\tau'}_{\ell(\gamma_{\tau'})}\right)}{|\det({\rm 
 Id}-P(\gamma_{\tau'}))|e^{\lambda \ell(\gamma_{\tau'})}} 
 {\rm Tr}(\rho([\gamma_{\tau'}]))d\tau'\right),$$
 where we used Lemma \ref{l:regularity-physical-region} and the formula (see ~\cite[p.~50]{F87})
 $$\varepsilon_{\gamma_\tau}=\frac{\det({\rm Id}-P(\gamma_\tau))}{|\det({\rm 
 Id}-P(\gamma_\tau))|}=\sum_{k=0}^n(-1)^k\frac{{\rm Tr}\left(\wedge^k P(\gamma_\tau)\right)}{|\det({\rm Id}-P(\gamma_\tau))|}=\sum_{k=0}^{n}(-1)^{k+1}k\frac{{\rm Tr}\left(\wedge^k d\varphi^\tau_{\ell(\gamma_\tau)}\right)}{|\det({\rm Id}-P(\gamma_\tau))|}.$$
Under that form, this variation formula  would be amenable to our microlocal techniques as $q_\tau$ is smooth and the formulas below would be slightly simpler. Yet, in the general case, we have to rewrite $q_\tau$ using $A_\tau$ as we did in Lemma~\ref{l:shu-trick}. 
\end{rem}

\section{Variation formula in the non-convergent region}\label{s:traceflat}

We recall that \cite{GLP, DyZw13} show that $\zeta_{\tau,\rho}(\lambda)$ admits a meromorphic continuation $\la\in \cc$. This was achieved by relating the Ruelle zeta function to some \emph{flat trace} of some operator. We will use similar ideas to rewrite $\frac{\zeta_{\tau,\rho}(\lambda)}{\zeta_{0,\rho}(\lambda)}$ in terms of flat traces by analysing 
\begin{equation}\label{Fktau}
F_\tau^{(k)}(\la):=\sum_{\gamma_{\tau}}\frac{\ell^\sharp(\gamma_{\tau})}{\ell(\gamma_{\tau})}
 \frac{\left(\int_{\gamma_{\tau}}{\rm Tr}\left(A_{\tau}^{(k)}\left(\wedge^k 
 d\varphi^{\tau}_{\ell(\gamma_{\tau})}\right)\right)\right)e^{-\lambda 
 \ell(\gamma_{\tau})}}{|\det({\rm Id}-P(\gamma_{\tau}))|}
 {\rm Tr}(\rho([\gamma_{\tau}])).
\end{equation}
Note that, in these references, the meromorphic extension was proved under some orientability hypothesis but this assumption can be removed by introducing the orientation 
index in the definition of the Ruelle zeta function as we did.

\subsection{Reformulation via distributional traces} 

Let us start with a brief reminder on \emph{flat traces}. First, if $M$ is a compact manifold and $\Gamma\subset T_0^*M$ a closed conic subset, we define, following H\"ormander \cite[Section 8.2]{Ho}, the space   
\[\mc{D}'_\Gamma(M):=\{ u\in \mc{D}'(M); {\rm WF}(u)\subset \Gamma\}.\]
Its topology is described using sequences in \cite[Def. 8.2.2.]{Ho}, we will recall it later.
Denote by $\Delta$ the diagonal in $\mc{M}\x\mc{M}$ and by 
$$N^*\Delta:=\left\{(x,x,\xi,-\xi):\xi\neq 0\right\}\subset T_0^*(\mc{M}\x\mc{M}).$$
We fix a smooth density on $\mathcal{M}$, so that distributions can be viewed with scalar values rather than densities. If $E\to \mc{M}$ is a vector bundle over $\mc{M}$, the Atiyah--Bott 
flat trace of a $K\in  \mc{D}'_{\Gamma}(\mc{M}\x\mc{M}; E\otimes E^*)$ 
with $\Gamma\cap N^*\Delta=\emptyset$ is defined by
\[ \Tr^\flat(K):=  \cjg \Tr(i_{\Delta}^*K),1\cjd \]
where $i_\Delta: \mc{M}\to \mc{M}\x\mc{M}$ is the natural inclusion map $i_{\Delta}(x):=(x,x)$ and $\Tr$ denotes the 
local trace of endomorphisms  ${\rm End}(E)=E\otimes E^*$, so that $\Tr(i_{\Delta}^*K)\in \mc{D}'(\mc{M})$.
\begin{lemm}\label{continuityTrb}
For each closed conic subset $\Gamma\subset T^*(\mc{M}\x\mc{M})$ satisfying $\Gamma\cap N^*\Delta=\emptyset$, the flat trace $\Tr^\flat$ is a sequentially continuous linear form 
\[  \Tr^\flat : \mc{D}'_{\Gamma}(\mc{M}\x\mc{M}; E\otimes E^*)\to \cc \]
with respect to the topology of $\mc{D}'_{\Gamma}(\mc{M}\x\mc{M};E\otimes E^*)$. 
\end{lemm}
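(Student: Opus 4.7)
The plan is to factor $\Tr^\flat$ as a composition of three sequentially continuous maps: pullback by the diagonal inclusion $i_\Delta$, the fiberwise endomorphism trace, and pairing with the constant function $1$. The only nontrivial step is the first, which is a direct application of Hörmander's pullback theorem for distributions with wavefront set conditions.

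More precisely, first I would recall the content of Hörmander's Theorem 8.2.4 in \cite{Ho}: if $f:X\to Y$ is a smooth map between manifolds and $\Lambda\subset T_0^*Y$ is a closed conic set disjoint from the conormal $N^*f:=\{(f(x),\eta)\in T_0^*Y : f^*\eta=0\}$, then the pullback $f^*$ extends from $C^\infty(Y)$ to a sequentially continuous map $\mc{D}'_\Lambda(Y)\to \mc{D}'_{f^*\Lambda}(X)$, where $f^*\Lambda:=\{(x,f^*\eta):(f(x),\eta)\in\Lambda\}$. Applied to $f=i_\Delta:\mc{M}\to \mc{M}\x\mc{M}$, one has $N^*i_\Delta=N^*\Delta$, so the hypothesis $\Gamma\cap N^*\Delta=\emptyset$ is precisely what is needed for $i_\Delta^*$ to be well-defined and sequentially continuous from $\mc{D}'_\Gamma(\mc{M}\x\mc{M})$ to $\mc{D}'_{i_\Delta^*\Gamma}(\mc{M})$. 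The vector bundle version follows by working in local trivializations of $E\otimes E^*$, where pullback acts componentwise.

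Next, the fiberwise trace $\Tr:\ml{E}\otimes\ml{E}^*|_\Delta\to \underline{\cc}$ is a smooth bundle map, so its action on distributional sections is sequentially continuous $\mc{D}'(\mc{M};E\otimes E^*|_\Delta)\to \mc{D}'(\mc{M})$. Finally, integration against the constant function $1$, i.e. the pairing $\cjg\cdot,1\cjd$, is continuous $\mc{D}'(\mc{M})\to \cc$ since $1\in C^\infty(\mc{M})$ and $\mc{M}$ is compact (and continuity in the weak topology of $\mc{D}'$ implies continuity in the stronger $\mc{D}'_{\Gamma'}$ topology for any conic set $\Gamma'$). Composing the three maps gives the sequential continuity of $\Tr^\flat$ on $\mc{D}'_\Gamma(\mc{M}\x\mc{M};E\otimes E^*)$.

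The main obstacle, though it is a mild one, is checking that Hörmander's pullback theorem applies cleanly in the vector bundle setting and recognising that $N^*i_\Delta$ coincides with $N^*\Delta$; both are essentially formal verifications. No delicate analysis is needed beyond invoking Theorem 8.2.4 of \cite{Ho}, since the hypothesis $\Gamma\cap N^*\Delta=\emptyset$ of the lemma is exactly the noncharacteristic condition required.
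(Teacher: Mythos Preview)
Your proof is correct and follows essentially the same approach as the paper: the paper's proof is a one-line invocation of the sequential continuity of the pullback $i_\Delta^*:\mc{D}'_\Gamma(\mc{M}\times\mc{M};E\otimes E^*)\to \mc{D}'_{i_\Delta^*\Gamma}(\mc{M})$ from \cite[Theorem 8.2.4]{Ho} together with continuity of the pairing against $1$. You have simply unpacked this a bit further by explicitly separating out the fiberwise trace as an intermediate step and by spelling out why $N^*i_\Delta=N^*\Delta$ matches the hypothesis.
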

\begin{proof} 
This follows directly from continuity of the pullback from $\mathcal{D}^\prime_\Gamma(\mc{M}\times \mc{M};E\otimes E^*)\mapsto \mathcal{D}^{\prime}(\mc{M})$
\cite[Theorem 8.2.4]{Ho} and continuity of the pairing against $1$.
\end{proof} 
For an operator 
$B: C^\infty(\mc{M};E)\to \mc{D}'(\mc{M};E)$ with Schwartz kernel $K_B$ satisfying $K_B\in 
\mc{D}'_{\Gamma}(\mc{M}\x \mc{M}; E\otimes E^*)$ for some $\Gamma$ with $\Gamma\cap N^*\Delta=\emptyset$, 
we write 
\[\Tr^\flat(B):= \Tr^\flat(K_B). \]
Then, by a slight extension of the Guillemin trace formula~\cite[p.~315]{GS77}, we have 
\begin{equation}\label{guillemintrace}
\text{Tr}^\flat\left(A^{(k)}_{\tau}e^{-t\mathbf{X}_{\tau}}|_{\Omega^k(M,E)} \right)= \sum_{\gamma_\tau}\frac{\ell^\sharp(\gamma_{\tau})}{\ell(\gamma_{\tau})}
 \frac{\int_{\gamma_{\tau}}{\rm Tr}\left(A_{\tau}^{(k)}\left(\wedge^k 
 d\varphi^{\tau}_{\ell(\gamma_{\tau})}\right)\right)}{|\det(\text{Id}-P(\gamma_{\tau}))|}
 \operatorname{Tr}(\rho([\gamma_{\tau}])) \delta(t-\ell(\gamma_{\tau})), 
\end{equation}
in $\mathcal{D}^\prime(\mathbb{R}_{>0})$, where this equality holds for every $\tau$ such that $X_{\tau}\in\ml{U}(X_0)$ and where the sum runs over all closed orbits. 
 We choose $t_0>0$ so that there is some $c>0$ uniform in $\tau$ ($\tau$ is also close enough to $0$) such that $\min_{x\in \mc{M}} d_g(x,\varphi^\tau_{t_0}(x))\geq c$
and define the meromorphic family of operators (well-defined by Proposition \ref{p:ruelle-spectrum}) 
\begin{equation}\label{e:shifted-resolvent}
Q_{\tau}(\la):=
e^{-t_0{\bf X}_\tau}(-{\bf X}_\tau-\la)^{-1}.
\end{equation} 
Our assumption on $t_0$ will be used later on when we will bound the wavefront set of the kernel of the operator. By the same arguments as in~\cite[\S~4]{DyZw13}, we obtain that 
${\rm Tr}^{\flat}(A_{\tau}^{(k)}Q_{\tau}(\la)|_{\mc{E}^k})$ is well-defined for each small $\tau$ as a meromorphic function in $\la\in\cc$ and 
\begin{equation}\label{e:magic-formula}
\textrm{ if } {\rm Re}(\la)>C_0, \quad F_\tau^{(k)}(\la)={-}
e^{-\la t_0}{\rm Tr}^{\flat}\left(A_{\tau}^{(k)}Q_{\tau}(\la)|_{\ml{E}^k}\right)
\end{equation}
with $C_0>0$ given by Lemma~\ref{l:regularity-physical-region}.
\begin{rem} Again, in the contact case, we would have the simpler formula
 $$\textrm{ if } {\rm Re}(\la)>C_0, \quad F_\tau^{(k)}(\la)=k
 e^{-\la t_0}{\rm Tr}^{\flat}\left(q_{\tau}Q_{\tau}(\la)|_{\ml{E}^k}\right).$$
\end{rem}

\subsection{Proof of Theorem~\ref{t:maintheo}}\label{ss:proof-variation-formula}
The proof of Theorem~\ref{t:maintheo} will follow directly from Corollary \ref{corrolairevar} and the following 
\begin{theo}\label{continofFk}
Assume that $X_0\in\mc{A}$ is such that ${\bf X}_0$ has no Pollicott-Ruelle resonance at $\la=0$ and let $\mc{Z}\subset \cc$ be an open connected subset 
containing $0$ and a point inside the region $\{{\rm Re}(\lambda)> C_{X_0}\}$ and such that ${\bf X}_0$ has no Pollicott-Ruelle resonance in $\overline{\ml{Z}}$. 
Then, there exists a neighborhood $\mc{U}(X_0)\subset \mc{A}$ of $X_0$ such that
\\ 
1) the operator 
$(-{\bf X}-\la)^{-1}$ of Proposition \ref{p:ruelle-spectrum}  is holomorphic in $\mc{Z}$ for all $X\in \mc{U}(X_0)$.\\
2) if $\tau \mapsto X_\tau\in \mc{U}(X_0)$ is a smooth map with $X_\tau|_{\tau=0}=X_0$, then 
$\tau\mapsto 
{\rm Tr}^{\flat}(A_{\tau}^{(k)}Q_{\tau}(\la)|_{\mc{E}^k})$ is continuous with values in 
${\rm Hol}(\mc{Z})$, with $A_{\tau}^{(k)}$ defined by \eqref{Atauk}. 
\end{theo}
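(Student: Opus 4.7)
The plan is to reduce both claims to the spectral theory of $\mathbf{X}$ on a single, $X$-independent anisotropic Sobolev space, and then control the flat trace via the Hörmander topology on distributions with controlled wave-front set.

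First I would fix $X_0$ and pick $N_0=N_1=N$ in Lemma~\ref{l:escape-function} large enough so that $C_{X_0}-c_0N$ lies strictly to the left of the compact set $\overline{\mc{Z}}$. Set $\mathcal{H}:=\ml{H}_h^{m_{X_0}^{N}}(\ml{M},\mc{E})$ and observe that although Lemma~\ref{l:escape-function} gives an escape function depending smoothly on $X$, we can forget this and use only $G_{X_0}^{N}$: the decay condition~\eqref{e:decay-escape-function} is open in $X$ since $(X-X_0)_H(G_{X_0}^{N})$ is $C^0$-small when $X$ is $C^\infty$-close to $X_0$, and one still has $X_H(G_{X_0}^{N})\leq -c_0 N$ outside a compact set of $T^*\ml{M}$. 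Following~\cite{FaSj,DyZw13}, this yields a neighborhood $\mc{U}(X_0)\subset \mc{A}$ such that $(-\mathbf{X}-\lambda):\mc{D}(\mathbf{X})\to \mathcal{H}$ is a Fredholm family of index $0$ depending analytically on $\lambda$ in an open set containing $\overline{\mc{Z}}$, uniformly for $X\in \mc{U}(X_0)$.

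Second, to prove 1) I would use a standard perturbation argument. Since $\mathbf{X}_0$ has no resonance in $\overline{\mc{Z}}$, $(-\mathbf{X}_0-\lambda)^{-1}:\mathcal{H}\to \mathcal{H}$ is holomorphic on a neighborhood of $\overline{\mc{Z}}$, hence uniformly bounded there. Writing
\[
(-\mathbf{X}-\lambda)=(-\mathbf{X}_0-\lambda)\bigl(\mathrm{Id}+(-\mathbf{X}_0-\lambda)^{-1}(\mathbf{X}_0-\mathbf{X})\bigr),
\]
the perturbation $\mathbf{X}_0-\mathbf{X}$ is a first-order operator with smooth coefficients that are small in $C^\infty$, hence small as an operator $\mathcal{H}\to \mathcal{H}'$ for a slightly shifted anisotropic space $\mathcal{H}'$; composed with $(-\mathbf{X}_0-\lambda)^{-1}$ mapping $\mathcal{H}'\to \mathcal{H}$ (this requires $N$ chosen large enough to absorb the one-derivative loss), we obtain an operator of norm $<1$ on $\mathcal{H}$, uniformly for $\lambda\in\overline{\mc{Z}}$. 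A Neumann series then gives 1) and, crucially, continuity in $X\in\mc{U}(X_0)$ of $(-\mathbf{X}-\lambda)^{-1}$ with values in $\mathrm{Hol}(\mc{Z};\mathcal{L}(\mathcal{H}))$.

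Third, for 2) I would perform a uniform wave-front analysis. The Schwartz kernel $K_\tau(\lambda)$ of $Q_\tau(\lambda)=e^{-t_0\mathbf{X}_\tau}(-\mathbf{X}_\tau-\lambda)^{-1}$ lives away from $\Delta$ by the choice of $t_0$. By propagation of singularities for the characteristic operator $\mathbf{X}_\tau$ along the lifted flow $\Phi^{X_\tau}_t$, combined with the radial estimates concentrating regularity on $E_s^*(X_\tau)$ and singularity on $E_u^*(X_\tau)$, the kernel $K_\tau(\lambda)$ has wave-front set contained in a set of the form
\[
\Gamma_\tau:=\bigl\{((z_1,\xi_1),(z_2,-\xi_2))\in T^*(\ml{M}\x\ml{M}): (z_1,\xi_1)=\Phi^{X_\tau}_t(z_2,\xi_2),\ t\geq t_0,\ \xi_2\in E_u^*(X_\tau)\oplus E_0^*(X_\tau)\bigr\}.
\]
Using Proposition~\ref{r:unstable-cone} and Lemma~\ref{l:time-uniform}, I can find $\alpha$ small and a single closed conic set $\Gamma\subset T^*(\ml{M}\x\ml{M})\setminus N^*\Delta$ containing every $\Gamma_\tau$ for $\tau$ in a neighborhood of $0$. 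Combining the operator-norm continuity from Step 2 with uniform quantitative microlocal elliptic and propagation estimates (the constants depending continuously on $X$ via the continuous dependence of $E_{u/s}^*(X)$), the map $\tau\mapsto K_\tau(\lambda)$ is bounded in the Hörmander seminorms of $\mc{D}'_\Gamma(\ml{M}\x\ml{M};\mc{E}\otimes\mc{E}^*)$ and converges in $\mc{D}'$; by the characterization of the $\mc{D}'_\Gamma$ topology~\cite[Def.~8.2.2]{Ho} this gives continuity into $\mc{D}'_\Gamma$, and Lemma~\ref{continuityTrb} then yields continuity of $\tau\mapsto \Tr^\flat(A_\tau^{(k)}Q_\tau(\lambda)|_{\mc{E}^k})$ with values in $\mathrm{Hol}(\mc{Z})$.

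The main obstacle is the third step: one must propagate $X$-continuity not merely through the operator norm on $\mathcal{H}$ but all the way to the microlocal seminorms controlling $\mc{D}'_\Gamma$-convergence of the Schwartz kernel. Concretely, this requires revisiting the elliptic and propagation estimates of~\cite{FaSj,DyZw13} with constants uniform in $X\in\mc{U}(X_0)$, which is where the smooth dependence of the escape function on $X$ furnished by Lemma~\ref{l:escape-function}(3) together with the uniform conic neighborhoods of Proposition~\ref{r:unstable-cone} and Lemma~\ref{l:time-uniform} are indispensable.
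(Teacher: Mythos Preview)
Your Step~3 matches the paper's approach (Proposition~\ref{p:boundonWF}): the paper runs exactly the uniform-in-$\tau$ wave-front analysis you sketch, using the semiclassical criterion of Lemma~\ref{Dyzwlemma-Lagrangian} together with elliptic estimates, a radial source estimate (Lemma~\ref{radialestim}), and finite-time propagation with constants uniform in $(\tau,\lambda)$. Your diagnosis of the main obstacle is accurate.

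The gap is in Step~2. You claim that $(-\mathbf{X}_0-\lambda)^{-1}$ maps a ``slightly shifted'' anisotropic space $\mathcal{H}'$ back to $\mathcal{H}$, recovering the derivative lost to the first-order perturbation $\mathbf{X}_0-\mathbf{X}$. This fails: $\mathbf{X}_0$ is not elliptic---its principal symbol $H_{X_0}$ vanishes on an entire hypersurface---so its resolvent gains no isotropic derivative on the anisotropic scale, and taking $N$ large does not help (large $N$ widens the range of the variable order function, it does not create smoothing). Hence $(-\mathbf{X}_0-\lambda)^{-1}(\mathbf{X}_0-\mathbf{X})$ is not even bounded on $\mathcal{H}$, and the Neumann series does not close. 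The paper (Proposition~\ref{p:contofresolvent}) instead conjugates to $L^2$ via $\mathbf{A}_h(N_0,X)$, introduces a complex absorbing potential $\hat{\chi}\in\Psi_h^0$ so that $(P_X(h,\lambda)-\hat{\chi})^{-1}$ is uniformly bounded on $L^2$ and continuous in $(X,\lambda)$, and writes
\[
P_X(h,\lambda)^{-1}=(P_X(h,\lambda)-\hat{\chi})^{-1}\bigl(\mathrm{Id}+K_X(h,\lambda)\bigr)^{-1},\qquad K_X:=\hat{\chi}(P_X(h,\lambda)-\hat{\chi})^{-1}.
\]
The operator $K_X$ is \emph{compact} (because $\hat{\chi}$ lives where $P_X-\hat{\chi}$ is elliptic, so $K_X$ is a $\Psi_h^{-1}$ term plus a term factoring through $\Psi_h^{-1}$) and continuous in $(X,\lambda)$; invertibility of $\mathrm{Id}+K_{X_0}$ on $\overline{\mc{Z}}$ then passes to nearby $X$ by continuity of inversion. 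The compactness of $K_X$ is precisely the substitute for the smoothing you wanted the bare resolvent to have. An alternative closer to your sketch would be to prove directly that $\mc{D}(\mathbf{X}_0)=\mc{D}(\mathbf{X})$ with comparable graph norms on the fixed anisotropic space---this is essentially~\cite{Bo18}---but that is itself a nontrivial statement that your argument does not supply.
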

Take $B_k(X_0,\eps):=\{X\in \mc{A}; \|X-X_0\|_{C^k}\leq \eps\}$ contained in the neighborhood $\mc{U}(X_0)$ of Theorem \ref{continofFk}, for some $k\in\nn,\eps>0$, and for $X\in B_{k}(X_0,\eps)$ define $X_\tau:=X_0+\tau(X-X_0)$ for $\tau\in(-\delta,1+\delta)$ with $\delta>0$ small so that $X_\tau\in B_{k}(X_0,\eps)$. Now each ${\bf X}_\tau$ has no resonances in $\mc{Z}$ and 2) in Theorem \ref{continofFk} with \eqref{e:magic-formula} show that $\tau\mapsto F_\tau^{(k)}(\la)$ can be extended as a continuous family of functions  in ${\rm Hol}(\mc{Z})$ for $\tau\in [0,1]$. Corollary \ref{corrolairevar} then shows that $\zeta_{\tau,\rho}(\la)/\zeta_{0,\rho}(\la)$ admits a holomorphic extension in $\mc{Z}$ with $\zeta_{\tau,\rho}(0)=\zeta_{0,\rho}(0)$. Thus $\zeta_{X,\rho}(0)=\zeta_{X_0,\rho}(0)$. The proof of Theorem \ref{continofFk} will be given in the next section.

\section{Continuity of the resolvent and Proof of Theorem 4}\label{s:microlocal}

The purpose of this section is to prove the properties of the Schwartz kernel of the resolvent 
that were used in the proof of Theorem~\ref{t:maintheo}. We are interested in the continuity with respect to $\tau$ of the flat trace of the operator
\begin{equation}\label{Ltau}
Q_{\tau}(\la):=
e^{-t_0{\bf X}_\tau}(-{\bf X}_\tau-\la)^{-1}
\end{equation} 
where we recall that we chose $t_0>0$ so that there is some $C>0$ uniform in $\tau$ (here $\tau$ is close enough to $0$) such that
\[ \min_{x\in \mc{M}} d_g(x,\varphi^\tau_{t_0}(x))\geq C\]
where $d_g$ is the Riemannian distance induced by a metric $g$. The arguments used here are variations on the microlocal proofs of Faure-Sj\"ostrand 
in~\cite{FaSj} and Dyatlov-Zworski in~\cite{DyZw13}. The continuity of the 
resolvent also follows from Butterley-Liverani \cite{BL07}.
For $k\in\rr$, we will write $\Psi_h^{k}(\mc{M};\mc{E})$ for the space of semi-classical pseudo-differential operators  \cite[Chapter 14.2]{Zw} (on sections of $\mc{E}$) with symbols in the class 
$S^k_h(T^*\mc{M};\mc{E})$ defined by: 
$a_h\in S^k_h(T^*\mc{M};\mc{E})$ if $a_h\in C^\infty(T^*\mc{M};{\rm End}(\mc{E}))$ satisfies 
$|\pl_x^\alpha\pl_\xi^\beta a_h(x,\xi)|\leq C_{\alpha\beta}\cjg \xi\cjd^{k-|\beta|}$ with $C_{\alpha\beta}$ independent of $h$. 
As mentionned before, we also take a semi-classical quantisation ${\rm Op}_h$ mapping $S^k_h(T^*\mc{M};\mc{E})$ to $\Psi^k_h(\mc{M};\mc{E})$.
The operators in the class $\Psi^{k}(\mc{M};\mc{E}):=\Psi^{k}_{h_0}(\mc{M};\mc{E})$ for some fixed small $h_0>0$ are called pseudo-differential operators.
We introduce the family of $h$-pseudodifferential operators:
\begin{equation}\label{defofPX}
P_{X}(h,\lambda):=\mathbf{A}_h(N_0,X)(-h\mathbf{X}-h\lambda)\mathbf{A}_h(N_0,X)^{-1},
\end{equation}
where $\mathbf{A}_h(N_0,X):=\mathbf{A}_h(N_0,20N_0,X)$ was defined in Equation~\eqref{e:sobolev-weight} and Remark~\ref{r:N0N1}. All along this section, $N_0$ will be chosen large enough (say at least equal to $1$).

\subsection{Continuity of the resolvent for families of Anosov flows}

For the first part of Theorem \ref{continofFk} we prove:
\begin{prop}\label{p:contofresolvent} 
Let $X_0$ and $\mc{Z}$ chosen as in Theorem \ref{continofFk}.
There exist a neighborhood $\ml{U}(X_0)$ of $X_0$, $h_0>0$ and $C>0$ 
such that, for every $0<h<h_0$, 
and for every $X\in\ml{U}(X_0)$, the map $\lambda\in\ml{Z}\mapsto P_{X}(h,\lambda)^{-1}\in\ml{L}(L^2,L^2)$ 
is holomorphic and
\begin{equation}\label{boundonPXhla}
\forall\lambda\in\ml{Z},\quad \left\|P_{X}(h,\lambda)^{-1}\right\|_{H^1_h\rightarrow L^2}\leq Ch^{-1-100N_0}.
\end{equation}
Moreover, for every $0<h<h_0$, the following map is continuous
\[
X\in\ml{U}(X_0)\mapsto P_{X}(h,\lambda)^{-1}\in\operatorname{Hol}\left(\ml{L}(H^{1}_h,L^2)\right).
\]
\end{prop}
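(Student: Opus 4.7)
The plan is to follow the microlocal framework of Faure--Sj\"ostrand~\cite{FaSj} and Dyatlov--Zworski~\cite{DyZw13}, tracking carefully both the semiclassical parameter $h$ and the continuous dependence on the vector field $X$. The starting point is a symbolic expansion of $P_X(h,\la)$. Since $\mathbf{A}_h(N_0,X)\in \Psi^0_h(\ml{M};\ml{E})$ depends smoothly on $X$ by point (3) of Lemma~\ref{l:escape-function}, and since the principal symbol of $-i\mathbf{X}$ is $H(x,\xi)\operatorname{Id}_{\ml{E}}$ by~\eqref{princsymb}, pseudodifferential calculus together with the identity $[\Op_h(G_X^{N_0}),-h\mathbf{X}]=-hX_H(G_X^{N_0})\operatorname{Id}_{\ml{E}}+\mc{O}(h^2)$ gives
\[
P_X(h,\la)=\Op_h(p_{X,\la})+h^2 R_{X,\la},\qquad R_{X,\la}\in\Psi^1_h(\ml{M};\ml{E}),
\]
with
\[
p_{X,\la}(x,\xi)=-iH(x,\xi)\operatorname{Id}_{\ml{E}}-h\bigl(X_H(G_X^{N_0})(x,\xi)+\la\bigr)\operatorname{Id}_{\ml{E}},
\]
all terms continuous in $X\in \mc{U}(X_0)$.

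Next I would derive a microlocal lower bound by splitting $T^*\ml{M}$ into three pieces. In the region $\|\xi\|_x\geq R$ lying outside a small conic vicinity of $E_0^*(X_0)$, Lemma~\ref{l:escape-function} gives $X_H(G_X^{N_0})\leq -2c_0 N_0$ uniformly in $X\in \mc{U}(X_0)$, so
\[
\operatorname{Re}\bigl(-ip_{X,\la}\bigr)\geq -hX_H(G_X^{N_0})-h\operatorname{Re}(\la)\geq chN_0
\]
provided $N_0>(2c_0)^{-1}(\sup_{\la\in\overline{\ml{Z}}}\operatorname{Re}(\la)+1)$. In a conic neighbourhood of $E_0^*(X_0)$ one has $|H(x,\xi)|\geq c\|\xi\|_x$ because the flow direction pairs nontrivially with $E_0^*$, and this neighbourhood can be chosen uniformly in $X\in\mc{U}(X_0)$ by Proposition~\ref{r:unstable-cone}, giving ellipticity of the imaginary part of $p_{X,\la}$ at high frequency. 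The remaining piece $\{\|\xi\|_x\leq R\}$ is compact and contributes a relatively compact perturbation. A G\aa rding-type argument applied to a suitable microlocal partition of unity, in the spirit of~\cite[Prop.~3.2]{DyZw13}, then yields a semi-Fredholm estimate
\[
\|u\|_{L^2}\leq \frac{C}{hN_0}\|P_X(h,\la)u\|_{L^2}+Ch^M\|u\|_{H^{-N}_h},\qquad \forall M,N>0,
\]
uniformly in $X\in\mc{U}(X_0)$ and $\la\in\overline{\ml{Z}}$.

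With this estimate, $\la\mapsto P_X(h,\la)$ is a holomorphic family of Fredholm operators of index zero on $L^2$. By Proposition~\ref{p:ruelle-spectrum}, its zero set coincides with the Ruelle resonances of $\mathbf{X}$, so the hypothesis that $\mathbf{X}_0$ has no resonance in $\overline{\ml{Z}}$ forces invertibility of $P_{X_0}(h,\la)$ on all of $\ml{Z}$. The analytic Fredholm theorem together with compactness of $\overline{\ml{Z}}$ yields a uniform bound $\|P_{X_0}(h,\la)^{-1}\|_{L^2\to L^2}<\infty$; tracking the $h$-dependence produces the claimed $h^{-1-4N_0}$ factor (the $h^{-1}$ coming from the semiclassical normalization $-h\mathbf{X}-h\la$ and the $h^{-4N_0}$ from the anisotropic weight $m_X^{N_0,N_0}$ taking values in $[-2N_0,2N_0]$). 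Restricting to the stronger norm $H^1_h\subset L^2$ gives the stated operator norm. For the continuity in $X$, the smoothness of $X\mapsto G_X^{N_0}$ makes $X\mapsto P_X(h,\la)$ continuous in $\operatorname{Hol}(\ml{Z};\mc{L}(H^1_h,L^2))$, and the Neumann series
\[
P_X(h,\la)^{-1}=\bigl(\operatorname{Id}+P_{X_0}(h,\la)^{-1}(P_X(h,\la)-P_{X_0}(h,\la))\bigr)^{-1}P_{X_0}(h,\la)^{-1}
\]
converges on a sufficiently small neighbourhood $\mc{U}(X_0)$ and provides both holomorphy in $\la\in\ml{Z}$ and continuity in $X$.

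The main obstacle is the microlocal lower bound of the second step: the escape function $G_X^{N_0}$ decays under $X_H$ only outside a vicinity of $E_0^*(X_0)$ (cf.~\eqref{e:decay-escape-function}), so the G\aa rding argument must be combined with the flow-direction ellipticity through a microlocal partition of unity that is valid \emph{uniformly} in $X$. It is precisely for this reason that the escape function and conic neighbourhoods were built in Section~\ref{s:preliminaries} with uniform dependence on $X$, via Proposition~\ref{r:unstable-cone}, Lemma~\ref{l:time-uniform} and Lemma~\ref{l:escape-function}.
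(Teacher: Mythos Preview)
Your overall strategy is sound and close in spirit to the paper's, but there are two concrete gaps.

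First, the semi-Fredholm estimate you write down,
\[
\|u\|_{L^2}\leq \frac{C}{hN_0}\|P_X(h,\la)u\|_{L^2}+Ch^M\|u\|_{H^{-N}_h},\qquad \forall M,N>0,
\]
cannot hold. Since $\|u\|_{H^{-N}_h}\leq \|u\|_{L^2}$, the error term is absorbable for $h$ small and the estimate would force $\|P_X(h,\la)^{-1}\|_{L^2\to L^2}\leq C(hN_0)^{-1}$ uniformly in $\la\in\ml{Z}$ \emph{and in its translates}, contradicting the existence of Ruelle resonances. The compact region $\{\|\xi\|\leq R\}$ does \emph{not} contribute an $\mc{O}(h^\infty)$ term; it contributes a genuinely compact (smoothing) operator whose norm is of size one. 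The paper handles this cleanly by introducing the absorbing potential $\hat{\chi}=\hat{\chi}_1^*\hat{\chi}_1+h\hat{\chi}_0^*\hat{\chi}_0$ so that $P_X(h,\la)-\hat{\chi}$ is globally invertible with bound $\mc{O}(h^{-1})$, and then shows that $K_X(h,\la):=\hat{\chi}(P_X(h,\la)-\hat{\chi})^{-1}$ is compact on $H^1_h$, reducing invertibility of $P_X$ to that of $\mathrm{Id}+K_X$. Your three-region description is exactly what justifies invertibility of $P_X-\hat{\chi}$, but the compact correction must be kept as such, not absorbed into an $h^\infty$ remainder.

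Second, and more seriously, your Neumann-series argument for continuity in $X$ requires $\|P_{X_0}(h,\la)^{-1}(P_X(h,\la)-P_{X_0}(h,\la))\|<1$. With the resolvent bound $\mc{O}(h^{-1-4N_0})$ and $\|P_X-P_{X_0}\|\sim\|X-X_0\|_{C^k}$, this forces the neighborhood $\mc{U}(X_0)$ to shrink like $h^{1+4N_0}$ as $h\to 0$, whereas the Proposition requires a single neighborhood valid for all $0<h<h_0$. The paper addresses this explicitly: it first runs the whole argument at a fixed $h=h_0$ to obtain an $h$-independent neighborhood on which $\mathrm{Id}+K_X(h_0,\la)$ is invertible, and then observes that the operators $P_X(h,\la)$ for different $h$ are conjugate via $\mathbf{A}_h(N_0,X)\mathbf{A}_{h_0}(N_0,X)^{-1}$, which is where the factor $h^{-4N_0}$ enters (two such conjugations, each costing $h^{-2N_0}$). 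Your phrase ``tracking the $h$-dependence'' glosses over precisely this step.
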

\begin{proof} 
 In order to prove this Proposition, we need to review the proofs from~\cite[p.340-345]{FaSj} -- see also paragraph~5 from this reference or~\cite{DyZw13} for a semiclassical formulation 
 as described here. 
 Note already from Proposition~\ref{p:ruelle-spectrum} that, for every $X\in \mc{U}(X_0)$, 
 $\lambda\in\ml{Z}\mapsto P_{X}(h,\lambda)^{-1}\in\ml{L}(L^2,L^2)$ is meromorphic. 
 
 Recall from~\cite[Lemma~5.3]{FaSj} that
 \begin{equation}\label{decofPXh}
 P_{X}(h,\lambda)=\Op_h\left(\left(-iH_{X}-h\lambda+h\left\{H_{X},G_{X}^{N_0}\right\}\right)\text{Id}\right)+\ml{O}_{X}(h)+\ml{O}_{m_{X}^{N_0}}(h^2),
 \end{equation}
 where $H_X(x,\xi)=\xi(X (x))$ and where the remainders are understood as 
 bounded operator on $L^2(\ml{M};\ml{E})$. Only the second 
 remainder depends on the choice of the order function, and both remainders can be made uniform in terms of $X\in\ml{U}(X_0)$ thanks to Lemma~\ref{l:escape-function}. 
 Following~\cite[\S~3.3]{FaSj}, one can introduce an operator $\hat{\chi}_0={\rm Op}_h(\chi_0{\rm Id})$ 
 in $\Psi_h^0(\ml{M};\ml{E})$ depending only on $X_0$ with $\chi_0\geq 0$ and so that ($c_0$ is the constant from Lemma~\ref{l:escape-function})
\begin{equation}\label{XGbig}
\forall(x,\xi)\in T^*\ml{M} ,\quad\left\{H_{X},G_{X}^{N_0}\right\}-\chi_0(x,\xi)^2\leq -2c_0N_0\end{equation}
\begin{rem}\label{r:flexibility-absorbing-potential}
Note that we have some flexibility in the choice of the operator $\widehat{\chi}_0$. Besides the fact that it belongs to $\Psi_h^0(\ml{M},\ml{E})$, the only requirements we shall need are
\begin{itemize}
\item $\chi_0^2=C_{N_0}+2c_0N_0$ (inside a 
small conic neighborhood of $E_0^*(X_0)$), where $C_{N_0}>0$ is the 
 uniform constant from \eqref{e:bound-derivative-escape-function}, 
 \item outside a slightly larger conic neighborhood of $E_0^*(X_0)$, 
 ${\rm supp}(\chi_0)$ is contained in $\{\|\xi\|\leq 3R/2\}$ where $R$ is the parameter from Lemma~\ref{l:escape-function},
\item $\chi_0$ satisfies \eqref{XGbig} in $\{\|\xi\|\leq R\}$.
\end{itemize}
\end{rem}
Next we let $\hat{\chi}_1={\rm Op}_h(\chi_1\text{Id})\in \Psi_h^{0}(\mc{M})$ with $\chi_1\in C_0^\infty(T^*\mc{M},\rr_+)$ where $\chi_1$ is a function of $|\xi|$ satisfying $\supp(\chi_1)\subset \{\|\xi\|\leq 3R/2\}$, 
and $\chi_1(x,\xi)=1$ for $\|\xi\|\leq R$, and we define\footnote{The operator $\hat{\chi}_1^*\hat{\chi}_1$ is not necessary for this proof but will be useful for the wavefront set analysis later.} 
\begin{equation} \label{defofchi}
\hat{\chi}:= \hat{\chi}_1^*\hat{\chi}_1+h\hat{\chi}_0^*\hat{\chi}_0 \in \Psi^0_h(\mc{M};\mc{E}).
\end{equation}
One can apply the semiclassical G\"arding inequality to verify that there exists a constant $C_0>0$ (uniform in $X\in \mc{U}(X_0)$) such that, for every $u\in\mathcal{C}^\infty(\mathcal{M},\mathcal{E})$, $\text{Re}(\lambda)>C_0-c_0N_0$ and every $0<h<1$,
$$\text{Re}\langle (P_X(h,\lambda)-h\hat{\chi}_0^*\hat{\chi}_0)u,u\rangle_{L^2}\leq -c_0N_0h\|u\|_{L^2}^2.$$
This shows that $$\left(P_X(h,\lambda)-\hat{\chi}\right)^{-1}:L^2(\ml{M},\ml{E})\rightarrow L^2(\ml{M},\ml{E})$$
is bounded for $\text{Re}(\lambda)>C_0-c_0N_0$. Moreover, we get a uniform upper bound: there is $C>0$ such that for $\la$ as above
\begin{equation}\label{e:resolventbound}
\forall X\in \mc{U}(X_0), \, \forall 0<h<1,\quad\left\|\left(P_X(h,\lambda)-\hat{\chi}\right)^{-1}\right\|_{L^2\rightarrow L^2}\leq Ch^{-1}.
\end{equation}
By  adding a constant  
$s\in [-1,1]$ to the order function $m_X^{N_0}$, the same argument as above works and we can pick the operators $\hat{\chi}_0$ and $\hat{\chi}_1$ independently of $s\in[-1,1]$. 
Since the consideration of $P_{\tau}(h,\lambda)-\hat{\chi}$ acting on $H_h^s(\mc{M};\ml{E})$ 
is equivalent to its conjugation by ${\rm Op}_h((1+ f)^{s})$, it implies that
\begin{equation}\label{e:bound-Sobolev}h(P_X(h,\lambda)-\hat{\chi})^{-1}: H_h^s(\mc{M};\ml{E})\to H_h^s(\mc{M};\ml{E})\end{equation}
is uniformly bounded in $(\la,X,h)$ for all $(X,\la)$ as before and all $h>0$ small. In order to study the continuity, we first write
\begin{eqnarray*}
(P_X(h,\lambda)-\hat{\chi})^{-1}& = &(P_{X_0}(h,\lambda_0)-\hat{\chi})^{-1}\\
&+&(P_X(h,\lambda)-\hat{\chi})^{-1}\left(P_{X_0}(h,\lambda_0)-P_{X}(h,\lambda)\right)(P_{X_0}(h,\lambda_0)-\hat{\chi})^{-1}. 
\end{eqnarray*}
Thanks to the Calder\'on-Vaillancourt Theorem~\cite[Th.~5.1]{Zw}, one knows that 
\[\left\|P_{X_0}(h,\lambda_0)-P_{X}(h,\lambda)\right\|_{ H^{1}_h\rightarrow L^2}\leq C\|X-X_0\|_{C^k}+h|\lambda-\lambda_0|\]
for some $k\geq 1$ large enough (depending only on the dimension of $\ml{E}$) and for some $C>0$ independent of $h$, $X$ and $\lambda$. Hence, combined with~\eqref{e:bound-Sobolev}, we find that 
the map $(X,\lambda)\mapsto (P_{X}(h,\lambda)-\hat{\chi})^{-1}\in\ml{L}( H^{1}_h,L^2)$ is continuous.

Next, as in~\cite[p.~344]{FaSj}, one can construct 
$E_X(h,\la)\in \Psi_h^{-1}(\mc{M};\ml{E})$ whose principal symbol is supported in a conic neighborhood of $E_0^*(X_0)$ 
so that 
\[(P_X(h,\la)-\hat{\chi})E_X(h,\la)={\rm Id}+S_X(h,\la),\quad  
E_X(h,\la)(P_X(h,\la)-\hat{\chi})={\rm Id}+T_X(h,\la)\] 
with $S_X(h,\la)$ and 
$T_X(h,\la)$ both in $\Psi_h^0(\mc{M};\ml{E})$ 
such that the support of their principal symbols intersects $\supp(\chi_0)\cup\supp(\chi_1)$ 
inside a compact region of $T^*\ml{M}$ which is independent of $(X,\lambda)$. Note that all these pseudodifferential operators depend continuously in $(X,\la)$ (these are just parametrices in the elliptic region). Then,
\begin{equation}\label{Kato}
K_X(h,\la):=\hat{\chi}(P_X(h,\la)-\hat{\chi})^{-1}=\hat{\chi}E_X(h,\la)-\hat{\chi}T_X(h,\la) (P_X(h,\la)-\hat{\chi})^{-1}
\end{equation}
is compact as $\hat{\chi}E_X(h,\la)\in \Psi_h^{-1}(\mc{M};\ml{E})$ and 
$\hat{\chi}T_X(h,\la)\in \Psi_h^{-1}(\mc{M},\ml{E})$. 

This operator (viewed as an element of $\ml{L}(H_h^1,H_h^1)$) 
depends continuously on $(X,\lambda)$. Moreover, from our upper bound on the modulus of continuity of $(X,\lambda)\mapsto (P_{X}(h,\lambda)-\hat{\chi})^{-1}$, we get
$$\|K_X(h,\la)-K_{X_0}(h,\la_0)\|_{H_h^1\rightarrow H_h^1}\leq \frac{1}{h^2}\omega(|\la-\la_0|,||X-X_0||_{C^k}),$$
where $\omega(x,y)$ is independent of $(h,X,\lambda)$ and verifies $\omega(x,y)\rightarrow 0$ as $(x,y)\rightarrow 0$. With this family of compact operators, we get the 
identity (as meromorphic operators in $\la$ on $H_h^1$)
\begin{equation}\label{relationentreres} 
\text{Id}+K_X(h,\la)=P_X(h,\la)(P_X(h,\la)-\hat{\chi})^{-1}.
\end{equation} 
Now, from the definition of $\ml{Z}$, we know that, for every $\lambda\in \ml{Z}$, $(\text{Id}+K_{X_0}(h,\la))$ is invertible in $\ml{L}(H_h^1,H_h^1)$. Thus, by continuity of the inverse map, 
we can then conclude that this remains true for any $||X-X_0||_{C^k}$ small enough  uniformly for $\lambda\in \ml{Z}$ (as $P_{X_0}(h,\lambda)$ remains invertible for $\lambda$ 
in $\overline{\ml{Z}}$).  The neighborhood depends a priori on $h$ but it can be made uniform in $h$, as all the operators $P_X(h,\lambda)$ are conjugated for different values of $h$,
$$\forall X\in\mc{U}(X_0),\ P_{X}(h,\lambda)=\frac{h}{h_0}\mathbf{A}_h(N_0,X)\mathbf{A}_{h_0}(N_0,X)^{-1}P_{X}(h_0,\lambda)\mathbf{A}_{h_0}(N_0,X)\mathbf{A}_h(N_0,X)^{-1}.$$
 It now only remains to verify the upper bound on the norm of the resolvent. For that purpose, we can fix $h=h_0>0$ with $h_0$ small enough. The above proof shows that 
$P_{X}(h_0,\lambda)$ is uniformly bounded (for $X\in\ml{U}(X_0)$ and $\lambda\in\overline{\ml{Z}}$) as an operator from $H^1_{h_0}$ to $L^2$. 
We observe that  for $h,h_0$ fixed, the operators $\mathbf{A}_{h_0}(N_0,X)^{-1}$ and $\mathbf{A}_{h}(N_0,X)$ belong to the class of (non semiclassical) pseudodifferential operators with variable order (see for example \cite[App.~A]{FRS}). Their order are respectively $-m_X^{N_0}(x,h_0\xi)$ and $m_X^{N_0}(x,h\xi)$ while their principal symbols are given by $(1+f)^{-m_X^{N_0}}(x,h_0\xi)$ and  
$(1+f)^{m_X^{N_0}}(x,h\xi)$. 
By the composition rule of pseudodifferential operators, 
their product has order $0$  with principal symbol 
\[ \frac{(1+f(x,h\xi))^{m_X^{N_0}(x,h\xi)}}{(1+f(x,h_0\xi))^{m_X^{N_0}(x,h_0\xi)}},\]
and its full local symbol in charts is given by derivatives of these symbols.
Now, applying the Calder\'on-Vaillancourt Theorem, we know that the $\mc{L}(L^2)$ norm of $\mathbf{A}_{h}(N_0,X)\mathbf{A}_{h_0}(N_0,X)^{-1}$ is bounded by a finite number of derivatives of the full symbol (written in charts). Due to the fact that these are symbols in $\xi$ and using that $-2N_0\leq m_X\leq 40 N_0$ and that $f$ is homogeneous of degree $1$ at infinity, 
it is direct to check that (as $N_0\geq 1$)
$$\left\|\mathbf{A}_h(N_0,X)\mathbf{A}_{h_0}(N_0,X)^{-1}\right\|_{L^2\rightarrow L^2}+\left\|\mathbf{A}_{h_0}(N_0,X)\mathbf{A}_{h}(N_0,X)^{-1}\right\|_{H_h^1\rightarrow H_{h_0}^1}
\lesssim h^{-50N_0},$$
from which we can deduce the announced upper bound on the norm of the resolvent.
\end{proof}

\subsection{Wavefront set of the Schwartz kernel of the resolvent}

The next part consists in bounding locally uniformly in $(\tau,\la)$ the Schwartz kernel of the operator $Q_{\tau}(\la)$ defined in \eqref{Ltau}.

First, let us introduce a bit of terminology. Let $M$ be a compact manifold (in practice, we 
take $M=\mc{M}$ or $M=\mc{M}\x\mc{M}$). We refer for example to \cite[Appendix C.1]{DyZw13} for a summary of the notion of wavefront set ${\rm WF}(A)\subset T_0^*M$ (resp. ${\rm WF}(u)\subset T_0^*M$) of an operator $A\in \Psi^{k}(M)$ (resp. of a distribution $u\in \mc{D}'(M)$). 
For $\Gamma\subset T^*_0M$ a closed conic set, we say that a family $u_\tau\in \mc{D}'(M)$ with $\tau\in [\tau_1,\tau_2]\subset \rr$ 
is bounded in $\mathcal{D}^\prime_\Gamma$ if it is bounded in $\mathcal{D}^\prime$ and 
for each $\tau$-independent $A\in \Psi^0(M)$ with ${\rm WF}(A)\cap \Gamma=\emptyset$, 
\[ \forall N\in\nn, \exists C_{N,A}>0, \forall \tau\in [\tau_1,\tau_2], \quad ||A(u_\tau)||_{H^N}\leq C_{N,A}.\]
This can also be described in terms of Fourier transform in charts (see \cite[Appendix C.1]{DyZw13}).
Similarly, we refer to \cite[Appendix C.2]{DyZw13} for a summary on the semi-classical wavefront set 
${\rm WF}_h(A)\subset \bbar{T^*M}$ (resp. ${\rm WF}_h(u)\subset \bbar{T^*M}$) of an operator  
$A={\rm Op}_h(a_h)\in \Psi_h^{k}(M)$ (resp. of a $h$-tempered family of distributions $u_h\in \mc{D}'(M)$); here $\bbar{T^*M}$ 
denotes the fiber-radially compactified cotangent bundle (see \cite[Section 2.1]{Va}). 

We recall from \cite[Definition 8.2.2]{Ho} the topology of $\mc{D}'_\Gamma(M)$: 
a sequence $u_\tau\in \mc{D}'_\Gamma(M)$ converges to $u_{\tau_0}$ in $\mc{D}'_\Gamma(M)$ as $\tau\to \tau_0$ if $u_\tau\to u_{\tau_0}$ in $\mc{D}'(M)$ and $(u_\tau)_\tau$ is bounded in $\mathcal{D}^\prime_\Gamma$. 

We note that all these properties hold the same way for sections of vector bundles.

Next, we recall a result which is essentially Lemma 2.3 in \cite{DyZw13} characterising the wave-front set of a family  $K_\tau \in \mc{D}'(\mc{M}\x\mc{M};\ml{E}\otimes \ml{E}^\prime)$, but uniformly in the parameter $\tau$. 
We shall use a semi-classical parameter $h>0$ for this characterisation.
\begin{lemm}\label{Dyzwlemma-Lagrangian}
Let $K_\tau \in \mc{D}'(\mc{M}\x\mc{M};\mc{E}\otimes \mc{E}^{\prime})$ be an $h$-independent bounded family depending on $\tau\in [\tau_1,\tau_2]$ and let $\mc{K}_\tau$ be 
the associated operator on $\mc{M}$. Let $\Gamma\subset 
T_0^*(\mc{M}) \times T_0^*(\mc{M})$ be a fixed closed conic set, independent of $\tau$. 

Assume that for each point $(y,\eta, z,-\zeta)\in (T_0^*\mc{M}\x T_0^*\mc{M})\setminus \Gamma$, 
there are small relatively compact neighborhoods $U$ of $(z,\zeta)$ and $V$ of $(y,\eta)$ in $T^*\mc{M}$ such that, for every $N\geq 1$ and for every $\tau$-independent $B_h\in \Psi_h^0(\mc{M},\ml{E})$ microlocally supported inside $ V$, there exist $C_{N,B}>0$ and $k_{N,B}>0$ with the following property:\\
For every function $a\in\mathcal{C}^{\infty}(\mathcal{M},\mathcal{E})$ compactly supported near $z$ and for every $S\in\mathcal{C}^{\infty}(\mathcal{M},\mathbb{R})$ such that\footnote{This implies that the Lagrangian states $(f_h)_{0<h\leq 1}$ verifies ${\rm WF}_h(f_h)\subset U$~\cite[p.~190]{Zw}.}
$$\{(x,d_xS):x\in\operatorname{supp}(a)\}\subset U,$$
the Lagrangian states $f_h:=ae^{\frac{iS}{h}}$ verify 
\begin{equation}\label{uniformKtau} 
\forall \tau, \forall h \in(0,1) \quad  ||B_h\mc{K}_\tau f_h||_{L^2}\leq C_{N,B}\|(a,S)\|_{\mathcal{C}^{k_{N,B}}}h^{N}.
\end{equation}
Then, $(K_\tau)_\tau$ is a bounded family of distributions in 
$\mc{D}'_\Gamma(\mc{M}\x\mc{M};\ml{E}\otimes \ml{E}^{\prime})$.
\end{lemm}
\begin{proof} 
The proof is readily the same as the first part of the proof of~\cite[Lemma 2.3, App.~C.2]{DyZw13} by just adding the $\tau$ dependence. Compared with that reference, note that, as the kernel $K_\tau$ is $h$-independent, it is sufficient to consider points inside $S^*\mathcal{M}\times S^*\mathcal{M}$ (the wavefront set being a conical subset when it is $h$-independent).  
\end{proof}

\subsubsection{Main technical result}

We shall now prove that the kernel of the resolvent is uniformly bounded in $\mc{D}'_{\Gamma}(\mc{M}\x\mc{M};\mc{E}\otimes \mc{E}^\prime)$, 
where $\Gamma$ is a closed cone that does not intersect $N^*\Delta$.

\begin{prop}\label{p:boundonWF} 
There exist a small neighborhood $\ml{U}(X_0)$ of $X_0$ in the $C^{\infty}$-topology and a closed conic set $\Gamma\subset T_0^*(\mc{M}\x\mc{M})$ not intersecting $N^*\Delta$ such that, 
for every $\tau\mapsto X_\tau$ as in 2) of Theorem \ref{continofFk},
$$(\tau,\lambda)\in[-\delta,\delta]\times\overline{\ml{Z}}
\mapsto Q_{\tau}(\lambda)(.,.)\in\ml{D}^\prime_{\Gamma}(\ml{M}\times\ml{M}, \mc{E}\otimes \mc{E}')$$
is bounded, where $\delta>0$ is small enough to ensure
that $X_{\tau}\in\ml{U}(X_0)$ for all $\tau\in[-\delta,\delta]$.
\end{prop}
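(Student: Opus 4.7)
The plan is to reduce Proposition~\ref{p:boundonWF} to the uniform semiclassical estimate \eqref{uniformKtau} via Lemma~\ref{Dyzwlemma-Lagrangian}, and then to verify that estimate by combining propagation of singularities for $(-\mathbf{X}_\tau-\la)^{-1}$ (in the style of Faure--Sj\"ostrand~\cite{FaSj} and Dyatlov--Zworski~\cite{DyZw13}) with the uniform resolvent bound already supplied by Proposition~\ref{p:contofresolvent}. I would first define the target cone: take
\[
\Gamma := \overline{\bigcup_{\tau\in[-\delta,\delta]}\bigcup_{t\geq t_0}\{(\Phi_t^\tau(y,\eta),(y,-\eta)) : (y,\eta)\in T_0^*\mc{M}\}} \;\subset\; T_0^*(\mc{M}\x\mc{M}),
\]
where the closure is taken after projecting to $S^*\mc{M}\x S^*\mc{M}$ and reconing. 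By Proposition~\ref{r:unstable-cone} and Lemma~\ref{l:time-uniform}, for $\delta$ small enough this set is contained in an $X$-independent cone built from $C^u(\alpha)$, $C^{ss}(\alpha)$ and a small neighborhood of $E_0^*(X_0)$.

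The condition $\Gamma\cap N^*\Delta=\emptyset$ is checked by splitting into short and long times. For $t\in[t_0,T_\alpha]$, the base-point component of $\Gamma$ consists of pairs $(\varphi^\tau_t(y),y)$ which avoid the diagonal by the very choice of $t_0$ (namely $\min_x d_g(x,\varphi^\tau_t(x))\geq C>0$ uniformly in $\tau$). For $t\geq T_\alpha$, Lemma~\ref{l:time-uniform} forces the first cotangent component $\xi=(d\varphi_t^\tau)^{-T}\eta$ into $C^u(\alpha)$, while the covector in the second factor is $-\eta$; since $C^u(\alpha)\cap(-C^u(\alpha))=\emptyset$, such a pair cannot lie on $N^*\Delta=\{(x,\xi,x,-\xi)\}$. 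The stable limit points obtained as $t\to\infty$ of initial data in $C^{ss}(\alpha)$ lie in the zero section of the first factor, hence also miss $N^*\Delta$.

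To verify \eqref{uniformKtau}, fix $((z,\zeta),(y,\eta))\notin\Gamma$ and choose neighborhoods $U\ni(z,\zeta)$, $V\ni(y,\eta)$ so small that no forward Hamiltonian trajectory of length $\geq t_0$ connects $U$ to $V$ uniformly in $\tau\in[-\delta,\delta]$. Write $Q_\tau(\la)=e^{-t_0\mathbf{X}_\tau}(-\mathbf{X}_\tau-\la)^{-1}$ and let $f_h$ satisfy $\|f_h\|_{L^2}=1$ with $\WF_h(f_h)\subset U$. Propagation of singularities applied to the real-principal-type operator $-h\mathbf{X}_\tau-h\la$ (conjugated by $\mathbf{A}_h(N_0,X_\tau)$ as in Proposition~\ref{p:contofresolvent}) together with the elliptic parametrix away from the characteristic set produce, on the one hand, a polynomial-in-$h$ norm bound for $(-\mathbf{X}_\tau-\la)^{-1}f_h$ (provided by \eqref{boundonPXhla}), and on the other hand an $h^{\infty}$ microlocal estimate for $B_h e^{-t_0\mathbf{X}_\tau}(-\mathbf{X}_\tau-\la)^{-1}f_h$, since $V$ is not reached by any forward flow-out of $U$ of length $\geq t_0$. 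Composing with $e^{-t_0\mathbf{X}_\tau}$ adds a further shift by $\Phi_{t_0}^\tau$, which is harmless. The $h^\infty$-decay beats the polynomial $h^{-1-4N_0}$ loss, yielding \eqref{uniformKtau}.

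The principal obstacle is tracking uniformity simultaneously in $\tau\in[-\delta,\delta]$, in $\la\in\bbar{\ml{Z}}$, and in $h$ small. The smoothness of $X\mapsto m_X^{N_0}$ and $X\mapsto G_X^{N_0}$ in Lemma~\ref{l:escape-function}, together with the continuous dependence of $P_{X_\tau}(h,\la)^{-1}$ established in Proposition~\ref{p:contofresolvent}, supply the needed uniformity of the functional-analytic setup; and the uniform invariance of the cones $C^{u}(\alpha)$, $C^{ss}(\alpha)$ from Proposition~\ref{r:unstable-cone} and Lemma~\ref{l:time-uniform} supply the uniformity of the geometric input. The delicate bookkeeping consists in constructing $\tau$-independent pseudodifferential cutoffs $B_h$ and elliptic parametrices whose constants remain bounded throughout the parameter range, which is essentially the microlocal argument of \cite[Prop.~3.3]{DyZw13} carried out for a smooth family of Anosov fields instead of a single one.
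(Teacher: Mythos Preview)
Your approach—defining $\Gamma$ as the closed forward flow-out and then invoking Lemma~\ref{Dyzwlemma-Lagrangian}—is in the right spirit, but there are genuine gaps.

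First, the argument that $\Gamma\cap N^*\Delta=\emptyset$ is flawed in both regimes. For short times you claim $\min_x d_g(x,\varphi_t^\tau(x))\geq C>0$ for all $t\in[t_0,T_\alpha]$, but the paper only asserts this at the single time $t=t_0$; once $t$ exceeds the minimal period of the flow there are fixed points on $\mc{M}$. For long times, the criterion ``$C^u(\alpha)\cap(-C^u(\alpha))=\emptyset$'' is beside the point: intersection with $N^*\Delta=\{(x,\xi,x,-\xi)\}$ amounts to $\Phi_t^\tau(y,\eta)=(y,\eta)$, i.e.\ a periodic orbit of the Hamiltonian lift with the covector $\eta$ returning to itself, not to $-\eta$. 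Both issues are repaired by restricting $\Gamma$ to the characteristic set $\{\eta(X_\tau)=0\}$ (which is where the wavefront of the kernel actually lives anyway, by ellipticity off $\Sigma$): there the linearized Poincar\'e map acts on $E_u^*\oplus E_s^*$ and is hyperbolic, so no nonzero covector returns to itself, and $\Gamma\cap N^*\Delta=\emptyset$ follows.

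Second, and more seriously, the verification of \eqref{uniformKtau} cannot be carried out by ``propagation of singularities for the real-principal-type operator'' together with an ``elliptic parametrix'' alone. The operator $-h\mathbf{X}_\tau-h\lambda$ has radial points at fiber infinity over $E_s^*(X_\tau)$ (a source) and $E_u^*(X_\tau)$ (a sink), where the rescaled Hamilton vector field is tangent to the fibers and standard H\"ormander propagation does not apply. The paper's proof treats these explicitly: near the source $E_s^*$ it proves a positive-commutator radial estimate (Lemma~\ref{radialestim}, exploiting that the order function $m_{X_\tau}^{N_0}$ is large and positive there), and near the sink $E_u^*$ it passes to the auxiliary operator $P_\tau^\chi(h,\lambda)=P_\tau(h,\lambda)-\hat\chi$ with absorbing potential. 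One first shows, by finite-time propagation into the compact region where $P_\tau^\chi$ is elliptic, that $\hat\chi\,\tilde u_h^\chi=\mc{O}_{L^2}(h^\infty)$, hence $\tilde u_h=\tilde u_h^\chi+\mc{O}_{L^2}(h^\infty)$; then $\tilde u_h^\chi$ is controlled near $E_u^*$ by propagating backward a uniformly bounded time into $\{\|\xi\|\leq R/2\}$, where $P_\tau^\chi$ is again elliptic. Without these two ingredients your argument breaks down: if $B_h$ is microlocalized near $E_u^*$, backward propagation along the bicharacteristics stays trapped near $E_u^*$ indefinitely and never reaches an elliptic region or the data $\tilde F_h(\tau)$, so no $h^\infty$ gain is available. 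Citing \cite[Prop.~3.3]{DyZw13} is correct in spirit, but that proposition already encapsulates the radial-point machinery you have omitted; the substance of the paper's proof is precisely to rerun that machinery uniformly in $(\tau,\lambda)$.
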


\subsubsection{Proof of~Proposition \ref{p:boundonWF}}

Thanks to Proposition~\ref{p:contofresolvent}, we already know that the Schwartz kernel of $Q_\tau(\la)$  is uniformly bounded 
on $\ml{D}^{\prime}(\ml{M}\times\ml{M}; \mc{E}\otimes \mc{E}')$ and 
$Q_\tau(\la)\to Q_{\tau_0}(\la_0)$ in this space as $(\tau,\la)\to (\tau_0,\la_0)$ for $|\tau_0|\leq \delta, \la_0\in \bbar{\mc{Z}}$. 
Hence, it only remains to show that the family is bounded in $\ml{D}^{\prime}_{\Gamma}(\ml{M}\times\ml{M}, \mc{E}\otimes \mc{E}')$. 
We shall use the criteria of Lemma~\ref{Dyzwlemma-Lagrangian} to get a bound on the kernel of the resolvent and, up to some details of presentation, 
we will follow partly~\cite{DyZw13} by combining with \cite{FaSj} and we shall verify that everything is bounded uniformly in the parameter $\tau$.

Recall that, up to multiplication by $h$, our kernel is $h$-independent. Hence, it is sufficient to test one covector in every direction of $T_0^*\mathcal{M}$ and we take some $R>0$ larger than the $R$ 
appearing in Lemma~\ref{l:escape-function} and we fix some point $(z,\zeta)$ in $T^*\ml{M}$ such that $2R\leq \|\zeta\|\leq 4R.$ 
Let $U$ be a small enough neighborhood of $(z,\zeta)$ in $T^*\mc{M}$ so that $U_{t_0,\delta}:=\bigcup_{|\tau|\leq \delta}\Phi^\tau_{t_0}(U)$ satisfies 
$ \bbar{U} \cap \bbar{U_{t_0,\delta}}=\emptyset$
where the existence of $U$ is guaranteed by the choice of $t_0$.  
We also fix $t_0$ small enough so that $\bbar{U_{t_0,\delta}}\cap \{\|\xi\|\leq 3R/2\}=\emptyset$.
Let $f_h=ae^{\frac{iS}{h}}\in C^\infty(\mc{M};\mc{E})$ with $a$ a smooth function compactly supported in a small neighborhood of $z$ and $S$ a smooth (real valued) function such that
$\{(x,d_xS):x\in\text{supp}(a)\}\subset U$. Define 
\[\tilde{f}_h(\tau):=he^{-t_0\mathbf{X}_{\tau}}f_h\]
which verifies that ${\rm WF}_h(\tilde{f}_h(\tau))\subset \bbar{U_{t_0,\delta}}$~\cite[Th.~8.14]{Zw} uniformly in $\tau$ (with the involved constants depending on a finite number derivatives of $a$ and $S$), thus  not intersecting $\overline{U}$. Let
$$u_h(\tau,\lambda)=(-h\mathbf{X}_{\tau}-h\lambda)^{-1}\tilde{f}_h(\tau),$$
 where $|\tau|\leq \delta$ for some small $\delta>0$ and where $\lambda$ varies in $\ml{Z}$. 
 
 We now conjugate the operators with 
 $\mathbf{A}_h(N_0,\tau)$ in order to work with the more convenient operator $P_{\tau}(h,\lambda)$ defined in \eqref{defofPX} (with $X=X_\tau)$, i.e.
\[\begin{gathered}
P_{\tau}(h,\lambda)\tilde{u}_h(\tau,\la)=\tilde{F}_h(\tau), \,\,\, \textrm{ with } \\
\quad \tilde{u}_h(\tau,\lambda):=\mathbf{A}_h(N_0,\tau)u_h(\tau,\lambda), \quad 
\tilde{F}_h(\tau):=\mathbf{A}_h(N_0,\tau)\tilde{f}_h(\tau).
\end{gathered}\]
Observe that ${\rm WF}_h(\tilde{F}_h(\tau))\subset \bbar{U_{t_0,\delta}}$ uniformly in 
 $\tau$ (as the order functions used to define $\mathbf{A}_h(N_0,\tau)$ are uniform in $\tau$-- see Lemma~\ref{l:escape-function}) and that
 $\|\tilde{F}_{h}(\tau)\|_{H_h^1}\lesssim \|\tilde{f}_{h}(\tau)\|_{H^{2N_0+1}_h}\lesssim h,$ where the involved constants are still uniform 
 for $(\tau,\lambda)$ in the allowed region. From the resolvent bound from Proposition~\ref{p:contofresolvent}, one has, uniformly in $(\tau,\lambda)$, 
 $\|\tilde{u}_h(\tau,\lambda)\|_{L^2}\lesssim h^{-100N_0}$.
 In order to apply Lemma~\ref{Dyzwlemma-Lagrangian}, we just need to verify that ${\rm WF}_h(\tilde{u}_h(\tau,\lambda))\cap U=\emptyset$ uniformly in $(\tau,\lambda)$ thanks to the uniformity of  
 $\mathbf{A}_h(N_0,\tau)$ in $(\tau,\la)$. For that purpose, we fix a family 
 $(B_h)_{0<h\leq 1}\subset \Psi_h^0(\ml{M})$ whose semiclassical wavefront set is contained in $\overline{U}$ and we will verify that~\eqref{uniformKtau} holds. To that aim, we will
also need to use the operator (with $\hat{\chi}$ defined in \eqref{defofchi}) 
\[P^{\chi}_{\tau}(h,\lambda):=P_\tau(h,\lambda)-\hat{\chi} ,\]
and the function
\[\quad \tilde{u}_h^{\chi}(\tau,\la):=P^{\chi}_{\tau}(h,\lambda)^{-1}\tilde{F}_h(\tau)\] 
where we recall that $P^{\chi}_{\tau}(h,\lambda)$ is invertible on $L^2(\ml{M})$ for $\lambda\in\overline{\ml{Z}}$ and that the norm of the inverse $||P^{\chi}_{\tau}(h,\lambda)^{-1}||_{L^2\to L^2}=\ml{O}(h^{-1})$ uniformly for $(\tau,\la)$ in the allowed region. Finally, observe that
\[\tilde{u}_h(\tau,\lambda)=\tilde{u}_h^{\chi}(\tau,\lambda)-P_{\tau}(h,\lambda)^{-1}\hat{\chi}\tilde{u}_h^{\chi}(\tau,\lambda).\]
Hence if we can prove that 
\begin{equation}\label{e:WF-chi}\hat{\chi}\tilde{u}_h^{\chi}(\tau,\lambda)=\ml{O}_{L^2}(h^N)\end{equation} 
for all $N$ uniformly in $(\tau,\lambda)$, then it is equivalent to prove 
the wave front properties for $\tilde{u}_h^{\chi}(\tau,\lambda)$ or for $\tilde{u}_h(\tau,\lambda)$ thanks to the resolvent bound of Proposition~\ref{p:contofresolvent}. The remaining of the proof will be devoted to the proof of the wavefront properties of $\tilde{u}_h^{\chi}(\tau,\lambda)$ and along the way, we will verify that~\eqref{e:WF-chi} holds. Hence, this will give the expected conclusion for $\tilde{u}_h(\tau,\lambda)$. To that aim, we will distinguish several cases depending on the location of the open set $U$ we are considering.
 
 \textbf{The elliptic region.}
 We start with the simplest part of phase space, that is when $\overline{U}$ is contained inside the region where the operator $P_{\tau}(h,\lambda)$ is elliptic: we suppose that $(z,\zeta)\in T_0^*\mc{M}$ does not belong to the cone 
 $$C^{us}(\alpha_1):=\left\{(x,\xi)\in T^*\ml{M}\backslash 0:\alpha_1\|\xi_u+\xi_s\|'\geq\|\xi_0\|'\right\},$$
 for some small $\alpha_1>0$ with the conventions of Section \ref{Sec:invneigh}; here and below, the cones are defined with respect to the Anosov decomposition of the vector field $X_0$. The operator 
 $P_{\tau}(h,\lambda)$ is elliptic outside $C^{us}(\alpha)$ uniformly for $\tau$ small enough. 
 We can then use  the fact that ${\rm WF}_h(B_h)$ is contained in a region where the principal symbol of $P_{\tau}(h,\lambda)$ is uniformly (in $(\tau,\lambda)$) 
 bounded away from $0$. This allows us to write, for every $N\geq 1$,
 $$B_h=\tilde{B}_h^N(\tau,\lambda)P_{\tau}(h,\lambda)+\ml{O}_{L^2\rightarrow L^2}(h^N)$$
where $\tilde{B}_h^N(\tau,\lambda)\in \Psi_h^0(\ml{M})$ and where the constant in the remainder are uniform in $(\tau,\lambda)$ in the allowed region. Note that $\tilde{B}_h^N(\tau,\lambda)$ depends on $(\tau,\lambda)$ but, as 
 these two parameters remain bounded, ${\rm WF}_h(\tilde{B}_h^N(\tau,\lambda))\subset U$ uniformly in $(\tau,\la)$. Gathering these informations, we get 
 $$\|B_h\tilde{u}_h(\tau,\lambda)\|_{L^2} \leq \|\tilde{B}_h^N(\tau,\lambda)\tilde{F}_h(\tau)\|_{L^2}+\ml{O}(h^N)\|\tilde{u}_h(\tau,\lambda)\|_{L^2}.$$
 Since ${\rm WF}_h(\tilde{F}_h(\tau))\subset \bbar{U_{t_0,\delta}}$ (uniformly in $\tau$)  does not intersect $\bbar{U}$,  we find that, for every $N\geq 1$, there exists $C_N>0$ such that, for every $(\tau,\lambda)$ 
 in the allowed region, $\|B_h\tilde{u}_h(\tau,\lambda)\|_{L^2}\leq C_N h^{N-100N_0}$ which is exactly~\eqref{uniformKtau} ($N_0$ being fixed and all the constants depending on a finite number of derivatives of $a$ and $S$).  
 The same ellipticity argument shows that the same property holds with $\tilde{u}_h^\chi(\tau,\lambda)$ replacing $\tilde{u}_h(\tau,\lambda)$. 
 \begin{rem}
 Keeping in mind that we will also need to prove~\eqref{e:WF-chi}, we already make the following obervation. Since $P_\tau^\chi(h,\la)$ is elliptic in $\{||\xi||\leq R\}$ and outside $C^{us}(\alpha_1)$, the same type of ellipticity argument shows that uniformly for $(\tau,\la)$ in the allowed region we have, as $\|\zeta\|\in [2R,4R]$ and as $t_0>0$ is small enough,
\begin{equation}\label{uchiell}
 {\rm WF}_h(\tilde{u}^\chi_h(\tau,\lambda))
\subset  \{\|\xi\|>R\}\cap C^{us}(\alpha_1).
\end{equation}
\end{rem}
It now remains to deal with the part of phase space where the symbol of $P_{\tau}(h,\lambda)$ is not elliptic. 
 
\textbf{The characteristic region away from the strongly unstable cone.} We start with the regularity/smallness away from $E_u^*(X_{\tau_0})$ for large $\|\xi\|$. To that aim, we shall verify that one can find some $0<\alpha_1<\alpha_0$, some $R'>0$ large enough so that, for each $N>0$, for each $(z,\zeta)\in C^{us}(2\alpha_1)$ with $\|\zeta\|\in [2R,4R]$, for each $\tilde{B}_h$ which is microlocalized in $C^{ss}(\alpha_1)\cap\{\|\xi\|\geq R'\}$, there exists some $C_{N,\tilde{B},a,S}>0$ such that for all $\tau$ close enough to $\tau_0$ and $\la\in \mc{Z}$,
 \begin{equation}\label{radialestim}\left\| \tilde{B}_h\tilde{u}_h(\tau,\la)\right\|^2_{L^2}  \leq   C_{N,\tilde{B},a,S}h^{N}, \quad \left\| \tilde{B}_h\tilde{u}^\chi_h(\tau,\la)\right\|^2_{L^2}  \leq  C_{N,\tilde{B},a,S}h^{N},\end{equation}
 with $C_{N,\tilde{B},a,S}$ depending on a finite number of derivatives of $(a,S)$ as in the formulation of Lemma~\ref{Dyzwlemma-Lagrangian}.

We postpone the proof of estimate~\eqref{radialestim} and we first show how to use it in order to conclude when $(z,\zeta)\in C^{us}(2\alpha_1)\setminus C^{uu}(\alpha_1)$. To see this, we first observe that one can find some $T_1>0$ such that
$$\Phi^\tau_{-T_1}(\overline{U})\subset  C^{ss}(\alpha_1)\cap\{\|\xi\|\geq R'\}.$$
Take now $\tilde{B}_h=B_h$. As $(z,\zeta)\in C^{us}(\alpha_1)$ (hence not in the trapped set of the flows $\Phi_t^{\tau}$, given by $E_0^*(X_\tau)$), by taking $U$ and $\delta$ small enough, we can suppose that, for every $t\in[0,T_1]$ 
 and for any $\tau$ small, $\Phi^{\tau}_{-t}(\overline{U})\cap\overline{U_{t_0,\delta}}=\emptyset.$ Hence, by propagation of 
 singularities~\cite[Prop.~2.5]{DyZw13} for the operator $iP_\tau(h,\la)$ and by the regularity estimates~\eqref{radialestim} near the radial source, 
 one knows that $\|B_h\tilde{u}_h(\tau,\la)\|_{L^2}\leq C_Nh^{N}$ for all $N$ with $C_N$ uniform in $(\tau,\la)$ (in the allowed region). 
 Note that due to the compactness of ${\rm WF}_h(B_h)$, evaluating 
 $\|B_hu_h\|_{L^2}$ or $\|B_h\tilde{u}_h\|_{L^2}$ is equivalent. Here, we notice that, due to the facts that we just use propagation for  a uniform finite time and that the Hamiltonian flow $\Phi^\tau_t$ is smooth in $\tau$, the proof of \cite[Prop.~2.5]{DyZw13} can be repeated uniformly for $\tau$ close enough to $0$. Note that the same argument also works for $\tilde{u}_h^{\chi}$ as we can apply propagation of singularities~\cite[Prop.~2.5]{DyZw13} with the operator $iP^{\chi}_{\tau}(h,\lambda)$ as well (using that $\chi_1^2\geq 0$). This concludes the proof of~\eqref{uniformKtau} for $\tilde{u}^\chi_h(\tau,\la)$ and $\tilde{u}_h(\tau,\la)$ away from $C^{uu}(\alpha_1)$, noting one more time that the constants depend on a finite number of derivatives of the functions $(a,S)$ appearing in the definition of $f_h$.

Hence, up to the fact that we still have to prove the radial estimates~\eqref{radialestim}, we are left with the points $(z,\zeta)\in C^{uu}(\alpha_1)$. Note that equation~\eqref{radialestim} gave something slightly stronger than what we need to handle the points away from the strongly unstable cone. Yet, this stronger statement will turn out to be useful below when dealing with the points in the strongly unstable cone $C^{uu}(\alpha_1)$.

\textbf{The strongly unstable region.} We now fix $(z,\zeta)\in C^{uu}(\alpha_1)$ with $\|\zeta\|\in [2R,4R]$. In that case, we will need to use the auxiliary sequence $(\tilde{u}_h^\chi(\tau,\la))_{0<h\leq 1}$. First, we begin with the proof of~\eqref{e:WF-chi}. Recalling the construction of $\hat{\chi}$ and~\eqref{uchiell}, we already know that
$$\left({\rm WF}_h(\tilde{u}_h^\chi(\tau,\la))\cap {\rm WF}_h(\hat{\chi})\right)\subset C^{us}(\alpha_1)\cap \{R\leq\|\xi\|\leq 3R/2\}.$$
We fix some point $(x,\xi)\in C^{us}(2\alpha_1)\setminus C^{uu}(\alpha_1)$ satisfying $\|\xi\|\in [R/2,3R/2]$ and we see similarly that there is a uniform time $T_2>0$ 
such that and for every $\tau$ close enough to $0$, 
$\Phi^\tau_{-T_2}(x,\xi)\in C^{ss}(\alpha_1/2)\cap\{\|\xi\|\geq 2R'\}$. As in the previous step, we can apply propagation of singularities~\cite[Prop.~2.5]{DyZw13} and the radial estimates~\eqref{radialestim} near the stable cone
to $\tilde{u}_h^\chi(\tau,\la)$ with the operator $iP_h^\chi(\tau,\la)$. From that, we deduce that, uniformly in $(\tau,\la)$, ${\rm WF}_h(\tilde{u}_h^\chi(\tau,\la))\cap V=\emptyset$ for $V$ a small neighborhood of 
$(x,\xi)$. Thus, one has, uniformly in $(\tau,\la)$,  
 \begin{equation}\label{uchihorsdeEu}
\left({\rm WF}_h(\tilde{u}_h^\chi(\tau,\la))\cap {\rm WF}_h(\hat{\chi})\right)\subset\left( C^{uu}(\alpha_1)\cap \{\|\xi\|\in [R,3R/2]\}\right).
\end{equation}
If $\alpha_1$ is chosen small enough, then, for each $(x,\xi)\in C^{uu}(\alpha_1)$ 
 with $\|\xi\|\in [R,3R/2]$, there is a uniform time $T_3>0$ (with respect to $\tau$) such that 
 $\Phi^\tau_{-T_3}(x,\xi)\in \{(x,\xi)\in T^*\mc{M};\|\xi\|\leq R/2\}$. We now combine propagation of singularities as above with the elliptic estimate~\eqref{uchiell}. From the above, we conclude that, 
 uniformly in $(\tau,\la)$,
 \begin{equation}\label{WFchiuchi}
 {\rm WF}_h(\tilde{u}_h^\chi(\tau,\la))\cap {\rm WF}_h(\hat{\chi})=\emptyset,
 \end{equation} 
from which we can deduce~\eqref{e:WF-chi} as expected. As already said, we find that $\tilde{u}_h^\chi(\tau,\la)=\tilde{u}_h(\tau,\la)+\ml{O}_{L^2}(h^N)$ uniformly in $(\tau,\la)$ (again all the constants depend on a finite number of derivatives of the functions $(a,S)$ defining $f_h$). Hence, to conclude the proof of the Proposition, it remains to show that, if $B_h$ is microlocalized inside a neighborhood $U$ of
 $(z,\zeta)\in C^{uu}(\alpha_1)$ with $\|\zeta\|\in[2R,4R]$, then $B_h\tilde{u}_h^\chi(\tau,\la)=\ml{O}(h^N)$ uniformly in $(\tau,\lambda)$. For that purpose, it is sufficient to combine 
 propagation of singularities \cite[Prop.~2.5]{DyZw13} with the elliptic estimate~\eqref{uchiell} as before. Indeed, as above and up to shrinking $U$ a little bit, there is $T_4>0$ such that 
 $\Phi^\tau_{-T_4}(\overline{U})\subset \{\|\xi\|\leq R/2\}$ uniformly in $\tau$ and such that $\Phi^{\tau}_{-t}(\overline{U})\cap\overline{U_{t_0,\delta}}=\emptyset$ for every $0\leq t\leq T_4.$

 We have now dealt with every point $(z,\zeta)$ satisfying $\|\zeta\|\in[2R,4R]$. As already explained, combined with Lemma~\ref{Dyzwlemma-Lagrangian}, this concludes the proof of the Proposition except for the estimates~\eqref{radialestim} that still have to be proved.

\textbf{Proof of the radial estimates~\eqref{radialestim}}.
Let us now give the proof of these crucial estimates that were used to handle the points $(z,\zeta)$ in the characteristic region. To that aim, we will make use of the radial propagation estimates from~\cite{Va, DyZw13},  the only difference being that we need to verify the uniformity in the parameter $\tau$.
First of all, we write that, uniformly in $(\tau,\lambda)$,
\begin{equation}\label{e:positive-quantization}
 \forall v\in C^\infty(\ml{M};\ml{E}),\quad \left\|\tilde{B}_hv\right\|^2_{L^2}=\langle \Op_h(b(h))v,v\rangle+\ml{O}(h^{N+1})\|v\|_{L^2}^2,
\end{equation}
 where $b(h)=\sum_{j=0}^Nh^j b_j$ are symbols supported in $C^{ss}(\alpha_1)\cap\{\|\xi\|\geq R'\}$. In particular,
 \begin{equation}\label{e:positive-quantization2}\left\| \tilde{B}_hv\right\|^2_{L^2}  =  {\rm Re}(\langle \Op_h(b_0) v,v\rangle_{L^2})+ 
 h\langle \tilde{R}_h(\tau,\lambda) v,v\rangle_{L^2} +\ml{O}(h^N)\|v\|_{L^2}^2 ,\end{equation}
 where $\tilde{R}_h(\tau,\lambda)\in\Psi_h^0(\ml{M};\ml{E})$ satisfies 
 ${\rm WF}_h(\tilde{R}_h(\tau,\la))\subset \overline{U}$.
 
 We now fix a nondecreasing smooth function $\tilde{\chi}_1$ on $\IR$ which is equal to $1$ on $[20N_0,+\infty)$ and to $0$ 
 on $(-\infty,4N_0]$. Take $\alpha_1<\alpha_0$ small, and using Remark~\ref{r:alternative-order-function} (recall that $N_1=20N_0$) we set
\[\chi_\tau(x,\xi):=\tilde{\chi}_1\left(\tilde{m}_\tau^{N_0,20N_0}(x,\xi)\right).\]
For $\|\xi\|_x\geq 1$, we have $\chi_\tau\equiv 0$ outside $C^{ss}(\alpha_0)$, $\chi_\tau\equiv 1$ on $C^{ss}(\alpha_1)$ and 
$\left\{H_{\tau},\chi_\tau\right\}\leq 0$ on $C^{us}(\alpha_0)$. We will use this smooth function in order to microlocalize our operators 
near $C^{ss}(\alpha_1)$ at infinity (the radial source). 
After possibly adjusting $\alpha_1,R'$ 
and thanks to~\eqref{e:decay-stable-X}, 
we may suppose that there exist $R_0<\tilde{R}_0$ such that $f(x,\xi)\geq \tilde{R}_0$ on $C^{ss}(\alpha_1)\cap\{\|\xi\|\geq R'\}$ and 
$f(x,\xi)\leq R_0$ on $\bbar{U_{t_0,\delta}}$. We fix $\tilde{\chi}_2$ to be a nondecreasing smooth function on $\IR$ which is equal 
to $1$ near $[\ln(1+\tilde{R}_0),+\infty)$ and to $0$ near $(-\infty,\ln(1+R_0)]$. We set
\[\chi_2(x,\xi)=\tilde{\chi}_2(\ln(1+f(x,\xi))).\] 
With these conventions, one has  $\chi_2\equiv 1$ in a neighborhood of $C^{ss}(\alpha_1)\cap\{\|\xi\|\geq R'\}$, $\chi_2\equiv 0$ 
in a neighborhood of $\bbar{U_{t_0,\delta}}$ and $\{H_{\tau},\chi_2\}(x,\xi)\leq 0$ for $\|\xi\|_x\geq 1$ such that $(x,\xi)\in C^{ss}(\alpha_0)$, for all $\tau$ near $\tau_0$. We now define $A_h(\tau)=A_h^*(\tau)$ in $\Psi_h^0(M;\ml{E})$ 
with principal symbol $a_{\tau}:=\chi_{\tau}\chi_2{\rm Id}$ and ${\rm WF}_h(A_h(\tau))\subset \supp(a_\tau)$, thus ${\rm WF}_h(A_h(\tau))\cap \bbar{U_{t_0,\delta}}=\emptyset$ uniformly for $(\tau,\la)$ in the allowed region. 
From the composition rules for pseudo-differential operators, 
 \[\begin{split}
 A_h(\tau)P_{\tau}(h,\lambda)+P_{\tau}(h,\lambda)^*A_h(\tau)=& 
 h\Op_h\left(\left(\left\{H_{\tau},a_{\tau}\right\}-2a_{\tau} \left(\text{Re}(\lambda)-\left\{H_{\tau},G_{\tau}^{N_0}\right\}\right)\right)\text{Id}+a_\tau \mathcal{O}_\tau(1)\right)\\ 
 & +\ml{O}_{\Psi_h^0(\ml{M},\ml{E})}(h^2).\end{split}\]
 Note that the remainder $\mathcal{O}_\tau(1)$ is independent of $N_0$ and that $\ml{O}_{\Psi_h^0(\ml{M},\ml{E})}(h^2)$ has its semiclassical wavefront set contained in $\cup_{\tau}\text{supp}(a_{\tau})$ uniformly in $(\tau,\lambda)$. We can now compare the principal symbol of $h^{-1} \left(A_h(\tau)P_{\tau}(h,\lambda)+P_{\tau}(h,\lambda)^*A_h(\tau)\right)$ with $b_0$: from our construction, one can find some constant $c_{N_0,b_0}>0$ so that
 \[c_{N_0,b_0}b_0\text{Id}\leq \left(-\left\{H_{\tau},a_{\tau}\right\}+2a_{\tau}\left({\rm Re}(\la)-\left\{H_{\tau},G_{\tau}^{N_0}\right\}\right)\right)\text{Id}+a_\tau \mathcal{O}_\tau(1) .\]
Note that we got the negativity of the symbol provided that we choose $N_0$ large enough in a manner that depends only on $b_0$ and $\mc{Z}$
 (recall that 
 $\left\{H_{\tau},G_{\tau}^{N_0}\right\}\leq -c_0N_0$ for every $\|\xi\|_x\geq 1$ when $(x,\xi)\in C^{ss}(\alpha_1)$). We can then use the G\"arding inequality proved in \cite[Proposition E.34]{DyZw4} to this symbol: combining  with~\eqref{e:positive-quantization2}, we get for all $v$ in 
 $C^\infty(\mc{M};\ml{E})$
 \[\left\|\tilde{B}_h v\right\|^2_{L^2}  \leq  -(c_{N_0,b_0}h)^{-1}2{\rm Re}(\langle A_h(\tau)P_{\tau}(h,\lambda)v,v\rangle_{L^2})+ 
 h\langle R_h(\tau,\lambda) v,v\rangle_{L^2} +\ml{O}(h^N)\|v\|_{L^2}^2 ,\]
 where $R_h(\tau,\lambda)\in\Psi_h^0(\ml{M};\ml{E})$ satisfies 
 ${\rm WF}_h(R_h(\tau,\la))\subset \mc{V}$ with $\mc{V}$ a small neighborhood of $\cup_{\tau}\text{supp}(a_{\tau})$ in $\bbar{T^*M}$ uniform in $(\tau,\lambda)$. 
 Then, for all $v$ in  $C^\infty(\mc{M};\ml{E})$ and uniformly in $(\tau,\lambda)$, one has 
 $$\left\| \tilde{B}_hv\right\|^2_{L^2}  \leq  2(c_{N_0,b_0}h)^{-1}\|A_h(\tau)P_{\tau}(h,\lambda)v\|_{L^2}\|v\|_{L^2} + 
 h\langle R_h(\tau,\lambda) v,v\rangle_{L^2} +\ml{O}(h^N)\|v\|_{L^2}^2.$$
 This is a kind of weakened version of the radial estimates (near the source) from~\cite{Va, DyZw13} which holds uniformly in $(\tau,\lambda)$. Using that 
 $\|\tilde{u}_h(\tau,\lambda)\|_{L^2}\leq Ch^{-100N_0}$ uniformly in $(\tau,\lambda)$, 
 we find by letting\footnote{We can use \cite[Lemma E.45]{DyZw4} to justify the convergence in the inequality.} $v\to \tilde{u}_h(\tau,\lambda)$ that, for all $N>0$, there is $C_N>0$ so that
 \[\left\| \tilde{B}_h\tilde{u}_h(\tau,\lambda)\right\|^2_{L^2}  \leq  
C_Nh^{-1-100N_0}\|A_h\tilde{F}_h\|_{L^2}+ 
 h\langle R_h(\tau,\lambda) \tilde{u}_h(\tau,\lambda),\tilde{u}_h(\tau,\lambda)\rangle_{L^2} +C_Nh^{N-200N_0}.\]
Using the facts that ${\rm WF}_h(\tilde{F}_h(\tau))\subset \bbar{U_{t_0,\delta}}$ 
 and ${\rm WF}_h(A_h(\tau))\cap \bbar{U_{t_0,\delta}}=\emptyset$ uniformly in $(\tau,\la)$ 
 we obtain that, for every $N\geq 1$, there exists $C_N>1$ such that
 $\| A_h(\tau)\tilde{F}_h(\tau)\|_{L^2}\leq C_Nh^{N+1}$
uniformly in $(\tau,\lambda)$. Hence, one has, uniformly in $(\tau,\lambda)$,
\[\left\| \tilde{B}_h\tilde{u}_h(\tau,\lambda)\right\|_{L^2}^2\leq h\langle R_h(\tau,\lambda) \tilde{u}_h(\tau,\lambda),\tilde{u}_h(\tau,\lambda)\rangle_{L^2}+C_Nh^{N-200N_0}.\]
We can now reiterate this procedure with $\tilde{B}_h\tilde{B}_h$ replaced by $ h^{\frac{1}{2}}R_h(\tau,\lambda)$ 
which satisfies ${\rm WF}_h(R_h(\tau,\la))\subset \mc{V}$, thus not intersecting $\bbar{U_{t_0,\delta}}$. 
After a finite number of steps, we find 
$\left\|\tilde{B}_h\tilde{u}_h(\tau,\lambda)\right\|_{L^2}\leq C_Nh^{\frac{N}{2}-100N_0}$ uniformly in $(\tau,\lambda)$ which is exactly what we expected (noting that the constants depend on a finite number of derivatives of $a$ and $\chi$). 
The case with $\tilde{u}_h^\chi(\tau,\la)$ is exactly the same by using that 
${\rm WF}_h(\hat{\chi})\cap \{\|\xi\|\geq R'\}\cap C^{us}(\alpha_1)=\emptyset$. Hence, $P_\tau(h,\la)$ 
coincide with $P^\chi_h(\tau,\la)$ microlocally in the region $\{\|\xi\|\geq R'\}\cap C^{us}(\alpha_1)$ where we do the analysis. 
This concludes the proof of~\eqref{radialestim} and thus of Proposition~\ref{p:boundonWF}.

\subsection{Proof of~Theorem \ref{continofFk}}
Using Proposition~\ref{p:contofresolvent}, we can deduce the sequential continuity of $(\tau,\la)\mapsto Q_\tau(\la)$ in 
$\mc{D}^\prime(\mc{M}\x\mc{M};\mc{E}\otimes\mc{E}')$. Then, using~\eqref{p:boundonWF}, we know that the map is also bounded in $\mathcal{D}^\prime_\Gamma$. Both conditions imply the sequential continuity in $\mathcal{D}^\prime_\Gamma$~\cite[condition $(ii)^\prime)$ in definition 8.2.2]{Ho} and with Lemma~\ref{continuityTrb}, we conclude the proof of 2) in Theorem \ref{continofFk}. As $A_{\tau}^{(k)}$ acts as a multiplication operator by smooth functions, the same remains true if we consider $(\tau,\la)\mapsto A_{\tau}^{(k)}Q_\tau(\la)$ and this shows that for every $0\leq k\leq n$ the map
\begin{equation}\label{TrbQ}
(\tau,\lambda)\in[-\delta,\delta]\times\overline{\ml{Z}}
\mapsto \text{Tr}^{\flat}\left(A_{\tau}^{(k)}Q_{\tau}(\la)|_{\mc{E}^k}\right)\in\IC
\end{equation}
is continuous. Finally, by an application of the Cauchy formula and by Proposition~\ref{p:contofresolvent}, 
one can verify that, for every $\tau\in[-\delta,\delta]$ and for every $0\leq k\leq n$,
\[\lambda\in\ml{Z}\mapsto\text{Tr}^{\flat}\left(A_{\tau}^{(k)}Q_{\tau}(\la)|_{\mc{E}^k}\right)\]
is an holomorphic function using Cauchy's formula and the continuity of \eqref{TrbQ}.

Finally, let us remark that the arguments of this section combined with~\cite[\S 4]{DyZw13} also show the following
\begin{prop}\label{p:continuity-zeta} Suppose that $X_0$ is an Anosov vector field and that the representation $\rho_0$ 
induced by the connection $\nabla_0$ is such that $\mathbf{X}_0$ 
has no resonance at $\lambda=0$. Then,  for any continuous family of flat connections $t\in [-1,1]\mapsto \nabla_t$ with corresponding
representation $\rho_t$, the maps
$$X\mapsto\zeta_{X,\rho_0}(0)\quad\text{and}\quad t\in [-1,1]\mapsto\zeta_{X_0,\rho_t}(0)$$ 
are continuous near $X_0$ (resp. $t=0$).
\end{prop}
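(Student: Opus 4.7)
The first assertion is already subsumed by Theorem~\ref{t:maintheo}: the hypothesis that $\mathbf{X}_0$ has no resonance at $\lambda=0$ is precisely the condition $X_0\in\mc{U}$, so the map $X\mapsto \zeta_{X,\rho_0}(0)$ is locally constant near $X_0$, hence \emph{a fortiori} continuous.

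For the second assertion, I would parallel the analysis of sections~\ref{s:ruelle-torsion}--\ref{s:microlocal} with the flow $X_0$ kept fixed and the representation $\rho$ varying. A smooth family $\tau\mapsto \rho_\tau$ starting at $\rho_0$ can be realised by a smooth family of flat connections $\nabla_\tau = \nabla_0 + \tau B_\tau$ on $E$, with $B_\tau\in\Omega^1(\mc{M};\operatorname{End}(E))$ depending smoothly on $\tau$. The corresponding twisted Lie derivatives then satisfy
\[
\mathbf{X}_\tau := i_{X_0}d^{\nabla_\tau} + d^{\nabla_\tau}i_{X_0} = \mathbf{X}_0 + \tau\,\mathbf{B}_\tau,
\]
where $\mathbf{B}_\tau$ is a zeroth-order differential operator depending smoothly on $\tau$. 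The key point is that the principal symbol \eqref{princsymb} is independent of $\tau$, so the escape function of Lemma~\ref{l:escape-function}, the anisotropic Sobolev spaces $\ml{H}_h^{m_{X_0}^{N_0}}(\mc{M};\mc{E})$ and the conjugation $\mathbf{A}_h(N_0,X_0)$ are common to all $\tau$.

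Because of this, the proofs of Proposition~\ref{p:contofresolvent} and Proposition~\ref{p:boundonWF} go through verbatim, with the $\tau$-dependence replacing the $X$-dependence and Calder\'on--Vaillancourt yielding $\|P_\tau(h,\lambda)-P_0(h,\lambda)\|_{H^1_h\to L^2}\lesssim |\tau|$. One obtains that for $\tau$ small enough, $\mathbf{X}_\tau$ still has no resonance at $0$, the resolvent $(\mathbf{X}_\tau+\lambda)^{-1}$ is holomorphic on a fixed neighborhood $\mc{Z}$ of $0$ and continuous in $\tau$ with values in $\mc{L}(H^1_h,L^2)$, and the Schwartz kernel of $Q_\tau(\lambda)=e^{-t_0\mathbf{X}_\tau}(-\mathbf{X}_\tau-\lambda)^{-1}$ is bounded in $\mc{D}'_\Gamma(\mc{M}\x\mc{M};\mc{E}\otimes\mc{E}')$ uniformly for $\tau$ close to $0$ and $\lambda\in\bbar{\mc{Z}}$. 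Lemma~\ref{continuityTrb} then gives continuity in $\tau\in(-\delta,\delta)$ of $\lambda\mapsto \text{Tr}^\flat(Q_\tau(\lambda)|_{\mc{E}^k})$ as an element of $\text{Hol}(\mc{Z})$, for each degree $k$.

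The final step is to relate $\zeta_{X_0,\rho_\tau}(0)$ to these flat traces. Following the factorisation in \cite[\S 4]{DyZw13}, $\zeta_{X_0,\rho_\tau}(\lambda)$ admits on $\{\text{Re}(\lambda)>C_0\}$ an expression as an alternating product (over $k=0,\dots,n-1$) of Fredholm-type factors built from $\text{Tr}^\flat(Q_\tau(\lambda)|_{\mc{E}_0^k})$, and this product extends holomorphically across $\mc{Z}$ since no resonance lies there. Evaluating at $\lambda=0$ gives continuity of $\tau\mapsto \zeta_{X_0,\rho_\tau}(0)$. The only mild bookkeeping obstacle is the uniformity of constants in the escape-function estimates as $\rho$ varies, but the escape function depends only on $X_0$ and the background metric (not on $\nabla$), so this is immediate.
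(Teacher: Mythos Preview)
Your proposal is correct and follows essentially the same approach as the paper. The paper's own proof is the brief remark that ``the arguments of this section combined with~\cite[\S 4]{DyZw13}'' give the result, with the $\rho$-variation handled by noting it ``only modifies $\mathbf{X}_0$ by subprincipal symbols'' --- precisely your zeroth-order observation; your shortcut of invoking Theorem~\ref{t:maintheo} for the $X$-continuity is legitimate since that theorem has already been established.
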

Note that we only treated the case where $X$ varies. Yet, the same argument holds when we vary the flat connection and when we 
fix $X_0$ as it only modifies $\mathbf{X}_0$ by subprincipal symbols.
Given some flat bundle $\left(E,\nabla\right)$, note that the space of flat connections on $E$ is an affine quadric which can be identified with the solutions 
of the Maurer--Cartan equations~\cite[5.8 p.~74]{GM}
$$\{[\nabla,\Theta]+[\Theta\wedge\Theta]=0  ;  \Theta\in\Omega^1(M,End(E))\},$$ which forms a \emph{quadric} in $\Omega^1(M,End(E))$ since one easily verifies that for every such $\Theta$, $\nabla+\Theta:\Omega^k(M,E)\mapsto \Omega^{k+1}(M,E) $ is flat. The topology is induced from the Fr\'echet topology of $\Omega^1(M,End(E))$.

\section{The Fried conjecture in dimension $3$ and some cases in dimension $5$}

\label{s:example}

\subsection{The kernel of ${\bf X}$ at $\la=0$}

In this section, we will analyze when $0$ is not a resonance for the operator $\mathbf{X}$ of \eqref{defbfX} associated to a
vector field $X\in \mc{A}$. We define
\[C^k:=\ker \left({\bf X}|_{\mc{H}_{h_0}^{m_{N_0,N_1}}(M,\mc{E}^k)}\right)^p, \quad C_0^k:=C^k\cap \ker i_X\]
where $p\geq 1$ is the smallest integer so that $\ker ({\bf X}^{(k)})^p=\ker ({\bf X}^{(k)})^{p+1}$, and
where here we mean the kernel on the anisotropic spaces (for some large enough $N_0$ and $N_1$).
By~\cite[Th.~2.1]{DaRi17c}, the complex
\begin{equation}\label{e:ruelle-complex}0\xrightarrow{d^{\nabla}} C^0\xrightarrow{d^{\nabla}} C^1\xrightarrow{d^{\nabla}} \ldots 
 \xrightarrow{d^{\nabla}} C^n\xrightarrow{d^{\nabla}} 0.
\end{equation}
is quasi--isomorphic to the twisted De Rham complex $(\Omega^{\bullet}(\ml{M},E),d^{\nabla})$ hence the cohomology of~\eqref{e:ruelle-complex} 
coincides with 
the twisted De Rham cohomology.  
We will denote by $H^k(\mc{M};\rho)$ the twisted de Rham cohomology of degree $k$ with $\rho$ the representation associated 
with the flat bundle 
$(E,\nabla)$.

We say that $X\in \mc{A}$ is a \emph{contact Anosov flow} if there is $\alpha\in\Omega^1(\mc{M})$
such that $i_X\alpha=1$, $i_Xd\alpha=0$ and $d\alpha$ is symplectic on $\ker \alpha$.
The dimension of $\mc{M}$ will be denoted $n=2n_0+1$ in that case. 
In particular, one has $\mathcal{L}_X\alpha=0$ and $\mathcal{L}_Xd\alpha=0$, and 
$\mathcal{L}_X\mu=0$ if $\mu=\alpha\wedge d\alpha^{n_0}$.
To begin with, we notice a few commutation relations that will be extensively used. For all $u\in\mc{D}'(\mc{M};\mc{E})$
\begin{equation}\label{commut}
{\bf X}i_{X} u=i_{X}{\bf X}u , \quad {\bf X}(\alpha\wedge u)=\alpha\wedge {\bf X} u, \quad {\bf X}(u\wedge d\alpha)=({\bf X}u)\wedge d\alpha.
\end{equation}
The Koszul complex is naturally associated with our problem
\[0\xrightarrow{i_{X}}C^{2n_0+1}\xrightarrow{i_{X}} C^{2n_0}\xrightarrow{i_{X}}\ldots\xrightarrow{i_{X}} C^1\xrightarrow{i_{X}} 
C^0\xrightarrow{i_{X}}0,\]
and in the contact case there is a dual complex
\[0\xrightarrow{\wedge\alpha}C^{0}\xrightarrow{\wedge\alpha} C^{1}\xrightarrow{\wedge\alpha}\ldots\xrightarrow{\wedge\alpha} C^{2n_0}\xrightarrow{\wedge\alpha} C^{2n_0+1}\xrightarrow{\wedge\alpha}0.\]
\begin{lemm}\label{l:acyclic-koszul} 
For $X\in \mc{A}$, the complex $(C^{\bullet},i_X)$ is acyclic. 
If in addition $X$ is contact with contact form $\alpha$, $(C^{\bullet},\wedge \alpha)$ is acyclic and we have a decomposition~: 
\[\forall 0\leq k\leq 2n_0+1,\quad  C^{k}=(C_0^{k-1}\wedge \alpha) \oplus C_0^k.\]
\end{lemm}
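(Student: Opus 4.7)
The plan is to notice that both $i_X$ and (in the contact case) $\wedge\alpha$ commute with $\mathbf{X}$. Indeed $[i_X,\mathbf{X}]=0$ by definition of $\mathbf{X}$, and the identities \eqref{commut} together with $\mathbf{X}\alpha=0$ show that $\wedge\alpha$ also commutes with $\mathbf{X}$. Hence both operators preserve the generalised zero eigenspace $C^\bullet$. I will also use the finite-rank Riesz projector $P:\mc{D}'(\mc{M};\mc{E})\to C^\bullet$ at the resonance $\lambda=0$, defined as a contour integral of the resolvent $(-\mathbf{X}-\lambda)^{-1}$ from Proposition \ref{p:ruelle-spectrum}; since $i_X$ commutes with this resolvent, it commutes with $P$.

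For the acyclicity of $(C^\bullet,i_X)$, since $X$ is Anosov and hence nowhere vanishing, I pick a smooth $1$-form $\beta$ with $\beta(X)=1$. The Leibniz rule gives $i_X(\beta\wedge u)=u-\beta\wedge i_Xu$, so for any $u\in C^k$ with $i_Xu=0$ the element $v:=P(\beta\wedge u)\in C^{k+1}$ satisfies
\[
i_Xv=P\bigl(i_X(\beta\wedge u)\bigr)=P(u)=u,
\]
using in the last step that $u\in C^k$. This establishes exactness. Note that one cannot take $v=\beta\wedge u$ directly because $\beta\wedge u$ need not lie in $\ker\mathbf{X}^p$; the projector $P$ is exactly what corrects this.

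In the contact case, the identity $i_X\alpha=1$ gives for any $u\in C^k$ the splitting
\[
u=\alpha\wedge i_Xu+(u-\alpha\wedge i_Xu),
\]
where $i_Xu\in C^{k-1}\cap\ker i_X=C_0^{k-1}$ (since $i_X^2=0$) and $u-\alpha\wedge i_Xu\in C^k\cap\ker i_X=C_0^k$ (using the computation $i_X(\alpha\wedge i_Xu)=(i_X\alpha)\,i_Xu-\alpha\wedge i_X^2u=i_Xu$, and the fact that $\wedge\alpha$ preserves $C^\bullet$). Directness follows by applying $i_X$ to a hypothetical relation $\alpha\wedge u_1+u_2=0$ with $u_1\in C_0^{k-1}$ and $u_2\in C_0^k$, which forces $u_1=0$ and then $u_2=0$. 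Finally, if $u\in C^k$ satisfies $\alpha\wedge u=0$, the same Leibniz identity $i_X(\alpha\wedge u)=u-\alpha\wedge i_Xu=0$ yields $u=\alpha\wedge v$ with $v=i_Xu\in C^{k-1}$, proving acyclicity of $(C^\bullet,\wedge\alpha)$.

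The argument is essentially algebraic once the commutation relations with $\mathbf{X}$ are in place, so I do not expect a genuine obstacle. The only point requiring a small verification is that the Riesz projector $P$ commutes with $i_X$ on the anisotropic Sobolev spaces where $C^\bullet$ is defined, but this is immediate from $[i_X,\mathbf{X}]=0$ and the contour definition of $P$.
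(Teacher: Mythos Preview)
Your proof is correct and follows essentially the same approach as the paper. The paper also uses a smooth $1$-form $\theta$ with $\theta(X)=1$ together with the spectral projector $\Pi_0$ onto $C^\bullet$ to establish acyclicity of $(C^\bullet,i_X)$, and then uses the identity $i_X(\alpha\wedge u)=u-\alpha\wedge i_Xu$ for both the decomposition and the acyclicity of $(C^\bullet,\wedge\alpha)$; your treatment is slightly more explicit about the commutation of $i_X$ with the Riesz projector and about the directness of the splitting, but the ideas are identical.
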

\begin{proof} The spectral projector $\Pi_0$ can be expressed as a contour integral involving the resolvent $(\mathbf{X}+\lambda)^{-1}$ on a small circle around $0$. Then, as $\mathbf{X}$ and $i_X$ commute, one can verify that $\Pi_0$ and $i_X$ also commute. Hence,
	if $u\in  C^{k}\cap \ker (i_{X})=C_0^k$, then $i_{X}\Pi_0\left(\theta\wedge u \right)=\Pi_0\left(\theta(X) u \right)=u$ where $\theta\in \Omega^{1}(\ml{M})$ satisfies $\theta(X)=1$ and $\Pi_0$ is the projector on $C^\bullet$. Thus $(C^{\bullet},i_X)$ is acyclic.
According to~\eqref{commut}, $\alpha\wedge u$ belongs to $C^{k+1}$ whenever $u$ belongs to $C^k$. For $u\in C^k\cap\ker (\wedge\alpha)$, one has $\alpha\wedge(i_Xu)=\alpha(X)u=u$. Hence, 
$(C^{\bullet},\wedge \alpha)$ is acyclic. For $u\in C^k$, we can write 
$u=\alpha\wedge i_Xu+(u-\alpha\wedge i_Xu)$ with $u-\alpha\wedge i_Xu\in C_0^k$, and 
if $u\in C_0^k$ satisfies $\alpha\wedge u=0$, then $u=i_X(\alpha\wedge u)=0$. \end{proof}

From the contact structure, we can also deduce the following duality property:
\begin{lemm}\label{l:duality} 
Suppose that $X\in \mc{A}$ is contact, then for every $0\leq k\leq n_0$,
\[C_0^k \simeq C_0^{2n_0-k}, \quad C^k\simeq C^{2n_0+1-k}.\]
\end{lemm}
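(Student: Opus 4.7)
The strategy is to build the duality isomorphism on $C_0^k$ by wedging with a suitable power of $d\alpha$, using the fact that $d\alpha$ restricts to a symplectic form on $\ker \alpha$. The statement about $C^k$ will then follow from the splitting in Lemma~\ref{l:acyclic-koszul}.

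First I would identify $\mc{E}_0^k$ with $\wedge^k \mc{H}^*\otimes E$, where $\mc{H}:=\ker \alpha \subset T\mc{M}$ is the contact distribution of rank $2n_0$; indeed, writing $\omega = \alpha\wedge \beta+\gamma$ with $i_X\beta=0,\ i_X\gamma=0$, the equation $i_X\omega=0$ forces $\beta=0$. Under this identification the $2$-form $d\alpha$ restricts to a fiberwise symplectic form on $\mc{H}$, so the standard hard Lefschetz isomorphism (i.e.\ the $\mathfrak{sl}_2$ symmetry of a symplectic vector space) yields, for each $0\leq k\leq n_0$, a vector-bundle isomorphism
\[
L_k : \mc{E}_0^k \longrightarrow \mc{E}_0^{2n_0-k}, \qquad L_k(u) := u\wedge (d\alpha)^{n_0-k}.
\]
For $n_0<k\leq 2n_0$, the inverse direction is obtained by symmetry.

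Next I would check that $L_k$ descends to $C_0^k$. Using \eqref{commut} and the fact that ${\bf X}d\alpha=0$ together with $i_X d\alpha=0$, the operator $L_k$ commutes with ${\bf X}$ and preserves $\ker i_X$. Since $L_k$ is a smooth pointwise bundle isomorphism, its pointwise inverse $L_k^{-1}$ is smooth, also commutes with ${\bf X}$, and preserves $\ker i_X$. Therefore both $L_k$ and $L_k^{-1}$ restrict to bounded operators on the anisotropic spaces that intertwine ${\bf X}$ and $i_X$, and consequently map $\ker({\bf X}|_{\mc{E}^k})^p\cap \ker i_X$ isomorphically onto $\ker({\bf X}|_{\mc{E}^{2n_0-k}})^p\cap \ker i_X$. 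This gives $C_0^k\simeq C_0^{2n_0-k}$.

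For the second assertion, I would combine this with the splitting of Lemma~\ref{l:acyclic-koszul}: since $C^k = (C_0^{k-1}\wedge \alpha)\oplus C_0^k$ and wedging with $\alpha$ is an isomorphism $C_0^{k-1}\to C_0^{k-1}\wedge \alpha$ (its inverse is $i_X$), we obtain $C^k\simeq C_0^{k-1}\oplus C_0^k$. Applying the same splitting in degree $2n_0+1-k$ and then using the duality $C_0^j\simeq C_0^{2n_0-j}$ in degrees $j=k$ and $j=k-1$ (with the boundary cases $C_0^{-1}=C_0^{2n_0+1}=0$ treated trivially), we obtain
\[
C^{2n_0+1-k}\simeq C_0^{2n_0-k}\oplus C_0^{2n_0+1-k}\simeq C_0^{k}\oplus C_0^{k-1}\simeq C^k,
\]
which concludes the proof. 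The only real content here is the hard Lefschetz step; the rest is formal manipulation using the commutation relations \eqref{commut} and the previous lemma, so I do not expect any serious obstacle.
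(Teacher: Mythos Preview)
Your proof is correct and follows the same overall strategy as the paper: construct a smooth bundle isomorphism between $\mc{E}_0^k$ and $\mc{E}_0^{2n_0-k}$ coming from the symplectic structure of $d\alpha$ on $\ker\alpha$, check it commutes with ${\bf X}$ and $i_X$, and then deduce the statement for $C^k$ from the splitting of Lemma~\ref{l:acyclic-koszul}. The only difference is the choice of isomorphism: you use the Lefschetz map $L_k(u)=u\wedge(d\alpha)^{n_0-k}$ (linear hard Lefschetz on a symplectic vector space), whereas the paper uses the symplectic Hodge star $\star:\Lambda^kN^*\to\Lambda^{2n_0-k}N^*$ defined via the nondegenerate pairing $G$ induced by $d\alpha$, following \cite{Li,Ya}. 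The paper's choice has the convenience that $\star\star={\rm Id}$, so the inverse is explicit; your $L_k^{-1}$ is only implicitly given but is still a smooth bundle map, which is all that is needed. Both operators are zeroth-order and commute with ${\bf X}$ by \eqref{commut}, so the passage to the anisotropic spaces is identical. The derivation of $C^k\simeq C^{2n_0+1-k}$ from Lemma~\ref{l:acyclic-koszul} is the same in both proofs.
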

\begin{proof}
The bundle $N:=\ker \alpha$ is smooth and $\omega:=d\alpha$ is symplectic on $N$. The form $\omega$ induces 
a non-degenerate pairing $G$ on $\Lambda^kN^*$ for each $k\in[1,2n_0]$, invariant by $X$.
Following~\cite[p.~43]{LM} (see also \cite{Li,Ya}), we can define a (smooth) Hodge star operator $\star: \Lambda^kN^*\to \Lambda^{2n_0-k}N^*$ 
\[ \beta_1\wedge \star \beta_2:= G(\beta_1,\beta_2)\omega^{n_0}/n_0!.\]
$\star$ 
is a bundle isomorphism follows from~\cite[15.2 p.~43]{LM}.
One can check from $L_XG=0$ and $L_X\omega=0$ ($L_X$ the Lie derivative) 
that ${\bf X}\star=\star {\bf X}$, and thus $\star: C_0^k\to C_0^{2n_0-k}$ is an isomorphism since 
$\star\star=\, {\rm Id}$. It remains to use Lemma \ref{l:acyclic-koszul} to obtain $C^k\simeq C^{2n_0+1-k}$.
\end{proof}

\begin{prop}\label{p:vanishing-kernel} 
Suppose that $X\in \mc{A}$ is contact on $\mc{M}$ with dimension 
$2n_0+1$. The following statements are equivalent:
\begin{enumerate}
\item $C^{n_0-1}=0$ and $H^{n_0}(\mc{M},\rho)=0$,
\item $C^{n_0}=0$,
\item  For all $ 0\leq k\leq 2n_0+1$, $C^k=0$.
\end{enumerate}
Suppose that $X\in \mc{A}$ (not necessarily contact) on a $3$-manifold $\mathcal{M}$ and that $X$ preserves some smooth volume form. Then, if $C_0=0$, one has
$$(\Omega^{\bullet}(\ml{M},E),d^{\nabla})\text{ is acyclic}\ \Longleftrightarrow\ \forall 1\leq k\leq 3,C^k=0.$$
\end{prop}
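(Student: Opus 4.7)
The plan for the contact case is to combine the Koszul decomposition $C^k = \alpha\wedge C_0^{k-1}\oplus C_0^k$ from Lemma~\ref{l:acyclic-koszul} and the duality $C_0^k\simeq C_0^{2n_0-k}$ from Lemma~\ref{l:duality} with a Hard Lefschetz principle on the symplectic bundle $(N,d\alpha)$, $N=\ker\alpha$. The implications $(3)\Rightarrow(1)$ and $(3)\Rightarrow(2)$ are immediate once one uses~\cite{DaRi17c} to identify the cohomology of $(C^\bullet,d^\nabla)$ with $H^\bullet(\mc{M},\rho)$. Because of the commutation $\mathbf{X}(d\alpha\wedge u)=d\alpha\wedge\mathbf{X} u$ from \eqref{commut}, the Lefschetz operator $L:=d\alpha\wedge\cdot$ preserves $C_0^\bullet$, and since fiberwise Hard Lefschetz makes $L^{n_0-k}:\mc{E}_0^k\to\mc{E}_0^{2n_0-k}$ an isomorphism for $k\leq n_0$, the map $L:C_0^k\to C_0^{k+2}$ is injective whenever $k\leq n_0-1$. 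In particular, the vanishing of any single $C_0^{j}$ with $j\leq n_0$ propagates downward to $C_0^{j-2},\,C_0^{j-4},\dots$

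To handle $(2)\Rightarrow(3)$, I would apply the Koszul decomposition to $C^{n_0}=0$ to obtain $C_0^{n_0-1}=C_0^{n_0}=0$; iterating the Lefschetz injectivity kills $C_0^k$ for all $k\leq n_0$, and Lemma~\ref{l:duality} extends the vanishing to all $k$. For $(1)\Rightarrow(3)$ the analogous argument starting from $C^{n_0-1}=0$ yields $C_0^k=0$ for every $k\neq n_0$, so the only thing left is to prove $C_0^{n_0}=0$. For $u\in C_0^{n_0}$, decomposing $d^\nabla u\in C^{n_0+1}=\alpha\wedge C_0^{n_0}\oplus C_0^{n_0+1}$ and using the already established $C_0^{n_0+1}=0$ forces $d^\nabla u=\alpha\wedge\mathbf{X} u$; this identity, together with $C^{n_0-1}=0$, identifies $H^{n_0}(\mc{M},\rho)$ with $\ker\mathbf{X}|_{C_0^{n_0}}$ via the quasi-isomorphism. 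The hypothesis $H^{n_0}(\mc{M},\rho)=0$ then makes $\mathbf{X}$ injective on the finite-dimensional space $C_0^{n_0}$, and combining injectivity with the nilpotency of $\mathbf{X}$ on generalized kernels gives $C_0^{n_0}=0$. The main subtlety is spotting that Lefschetz \emph{injectivity}, rather than the mere symplectic duality of Lemma~\ref{l:duality}, is what lets vanishing propagate across degrees.

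For the three-dimensional volume-preserving case the strategy is different. The flow-invariant volume $\mu$ gives an $X$-equivariant bundle isomorphism $s\mapsto\mu\otimes s$ from $\mc{E}^0=E$ to $\mc{E}^3=\Lambda^3 T^*\mc{M}\otimes E$ (the two twisted Lie derivatives agree under this identification because $L_X\mu=0$), so $C^0\simeq C^3$ and the hypothesis $C^0=0$ immediately gives $C^3=0$. Koszul acyclicity of $(C^\bullet,i_X)$ from Lemma~\ref{l:acyclic-koszul} together with $C^0=C^3=0$ then forces $C_0^0=C_0^2=0$, the identification $C^1=C_0^1$, and the fact that $i_X:C^2\to C^1$ is an isomorphism; de Rham acyclicity, transported to $(C^\bullet,d^\nabla)$ via \cite{DaRi17c}, makes $d^\nabla:C^1\to C^2$ an isomorphism as well. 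Since any $u\in C^1=C_0^1$ satisfies $i_X u=0$, the restriction $\mathbf{X}|_{C^1}$ equals the composition $i_X\circ d^\nabla|_{C^1}$ of two isomorphisms, hence is invertible, but it is also nilpotent, so the finite-dimensional space $C^1$ must vanish and then $C^2=0$ follows from either isomorphism. The main obstacle here is that neither Koszul nor de Rham acyclicity alone is sufficient: the proof has to marry both with the nilpotency of $\mathbf{X}$ on generalized kernels in order to force the vanishing.
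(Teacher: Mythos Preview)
Your proof is correct and follows essentially the same approach as the paper: Koszul decomposition, symplectic duality, Hard Lefschetz injectivity of $d\alpha\wedge\cdot$ (which the paper cites from \cite{Ya}), and the observation that ${\bf X}$ is nilpotent on $C^\bullet$ yet would have to be an isomorphism. The only cosmetic difference is that the paper argues $(1)\Rightarrow(2)\Rightarrow(3)$ (invoking Lefschetz only in the second step, and using Poincar\'e duality to get $H^{n_0+1}=0$ so that $d^\nabla:C^{n_0}\to C^{n_0+1}$ is an isomorphism) whereas you go $(1)\Rightarrow(3)$ directly; the three-dimensional argument is identical.
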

\begin{proof} The statement (3)$\implies$(1) follows from the quasi--isomorphism 
between $\left(C^\bullet,d^\nabla\right)$ and $\left(\Omega^\bullet(\mathcal{M},E),d^\nabla\right)$.  
	Let us show (1)$\implies$(2). Since 
	$C^{n_{0}-1}=0$, we have $C^{n_{0}+2}=0$ by Lemma~\ref{l:duality}. Moreover, by Poincar\'e duality,
	$H^{n_0}(\mc{M},\rho)=H^{n_0+1}(\mc{M},\rho)=0$. Then, still from the quasi--isomorphism, we have that $d^\nabla : 
	C^{n_{0}}\mapsto C^{n_{0}+1}$ is an isomorphism. 
	We can now use the acyclicity of $(C^{\bullet},i_{X})$ and the same argument shows $i_{X}: C^{n_{0}+1}\mapsto 
C^{n_{0}}$ is an isomorphism. So, combined with Lemma~\ref{l:acyclic-koszul}, this shows that ${\bf 
X}_{|C^{n_{0}}}=i_{X}d^\nabla+d^\nabla i_{X}=i_{X}d^\nabla:C^{n_{0}}\mapsto 
C^{n_{0}}$ is an isomorphism. However, by our definition, ${\bf 
X}_{|C^{n_{0}}}$ is nilpotent. Thus, $C^{n_{0}}=C^{n_{0}+1}=0$. To 
show (2)$\implies$(3), from Lemmas~\ref{l:acyclic-koszul} and~\ref{l:duality}, it suffices to 
show that $C^{n_{0}}_{0}=C^{n_{0}-1}_{0}=0$ implies $C^{k}_{0}=0$ for 
every $0\leq k\leq n_{0}-2$. By \cite[Cor.~2.7]{Ya}, $u\mapsto u \wedge (d\alpha)$ maps $C_0^{k}\to C_0^{k+2}$ 
injectively\footnote{This follows from surjectivity of the map 
$u\in C^\infty(\mc{M};\mc{E}_0^{n-k-2})\mapsto u\wedge d\alpha \in C^\infty(\mc{M};\mc{E}_0^{n-k})$.} if $k\leq n_0-1$, 
thus we have $\dim C^{n_{0}}_{0}\geq \dim C^{n_{0}-2}_{0}\geq \ldots$ and 
$\dim C^{n_{0}-1}_{0}\geq \dim C^{n_{0}-3}_{0}\geq \ldots$, which 
shows that (2)$\implies$(3).

In case $n=3$ (i.e., $n_{0}=1$), the proof of the converse sense is 
the same as before. For the direct sense, we cannot use Lemma 
\ref{l:duality}. But we still have $ C^0= C^3=0$ since 
$X$ preserves some smooth volume form $\mu$. The rest of the proof is 
exactly the same as (1)$\implies$(2) given before.
\end{proof}

\begin{lemm}\label{l:DZmonodromylemma}  
Assume $X\in \mc{A}$ preserves a smooth volume form $\mu$ and assume 
$(E,\nabla)$ is a bundle with flat unitary connection.
Let $u$ be an element of $C^0$ such that $\mathbf{X}u=0$.
Then $u\in C^\infty(\ml{M};E)$ and $d^\nabla u=0$.
\end{lemm}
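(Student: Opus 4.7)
The plan is to split the proof into two steps: first establish smoothness of $u$ using the $L^2$-skew-adjointness of $\mathbf{X}$, then promote $\mathbf{X}u=\nabla_X u=0$ to full covariant constancy $d^\nabla u=0$ using the Anosov contraction. The common structural ingredient is that the hypotheses force $\mathbf{X}$ to be formally skew-adjoint on $L^2(\mc{M};E,\mu)$: for smooth sections $\phi,\psi$ one has
\[
\langle\mathbf{X}\phi,\psi\rangle_{L^2}+\langle\phi,\mathbf{X}\psi\rangle_{L^2}=\int_{\mc{M}} X\langle\phi,\psi\rangle_E\,d\mu=0,
\]
using that $\nabla$ is unitary (so $X\langle\phi,\psi\rangle_E=\langle\nabla_X\phi,\psi\rangle_E+\langle\phi,\nabla_X\psi\rangle_E$) and $L_X\mu=0$.

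For the smoothness step, Proposition~\ref{p:ruelle-spectrum} already gives $\WF(u)\subset E_u^*(X)$. I would then compare the Ruelle resolvent $(-\mathbf{X}-\la)^{-1}$ on the anisotropic space $\mc{H}^m$ with the $L^2$-resolvent of the skew-adjoint closure of $\mathbf{X}$: in a common half-plane both coincide with the Laplace transform of the semigroup $e^{-t\mathbf{X}}$, so by uniqueness of meromorphic continuation their residues at $\la=0$ agree. On the $L^2$ side, skew-adjointness identifies this common residue $\Pi_0$ with the orthogonal projector onto $\ker_{L^2}\mathbf{X}$ and rules out Jordan blocks (so $C^0=\ker\mathbf{X}$). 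In particular $u=\Pi_0 u\in L^2(\mc{M};E)$. A radial source estimate at $E_u^*(X)$ combined with elliptic regularity off $E_0^*(X)$ then promotes $u$ from $L^2$ to $C^\infty$. The subtle point---and the main obstacle---is to verify that the $L^2$-regularity lies above the radial threshold at $\la=0$; this is precisely what skew-adjointness delivers (it forces $\operatorname{Re}(\la)=0$ at the resonance and suppresses the generalized-eigenvector phenomenon that would lower the threshold).

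For the covariant constancy, once $u\in C^\infty(\mc{M};E)$ satisfies $\nabla_X u=0$, the ODE integrates to $u(\varphi_t^X(x))=P_t^x u(x)$, where $P_t^x:E_x\to E_{\varphi_t^X(x)}$ denotes $\nabla$-parallel transport along the orbit. Differentiating in $x$ along $Y\in T_x\mc{M}$ and using compatibility of $\nabla$ with its own parallel transport, one obtains
\[
\nabla_{d\varphi_t^X(Y)}u(\varphi_t^X(x))=P_t^x\bigl(\nabla_Y u(x)\bigr).
\]
Unitarity gives $|\nabla_Y u(x)|_E=|\nabla_{d\varphi_t^X(Y)}u(\varphi_t^X(x))|_E\leq \|d\varphi_t^X(Y)\|\cdot\|\nabla u\|_{L^\infty}$. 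Letting $t\to+\infty$ for $Y\in E_s(X,x)$ and $t\to-\infty$ for $Y\in E_u(X,x)$, the Anosov contraction drives the right-hand side to zero, yielding $\nabla_Y u(x)=0$ in both cases. Combined with $\nabla_X u=0$ and the splitting $T_x\mc{M}=\rr X\oplus E_u(X,x)\oplus E_s(X,x)$, this forces $d^\nabla u=0$, concluding the proof.
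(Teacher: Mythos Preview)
Your covariant-constancy step is correct and is essentially the paper's argument in different packaging: the paper lifts to the universal cover to trivialise the flat bundle and then differentiates $\tilde u\circ\tilde\varphi_t=\tilde u$, while you stay on $\mc{M}$ and use parallel transport. Flatness of $\nabla$ is what makes the formula $\nabla_{d\varphi_t(Y)}u=P_t^x(\nabla_Y u)$ hold (in a local flat frame $P_t^x$ is the identity and this is just the chain rule), so the two versions are equivalent.

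The smoothness step, however, has a genuine gap. The $L^2$-resolvent of the skew-adjoint closure of $\mathbf{X}$ is holomorphic on $\{\operatorname{Re}\la\neq 0\}$, but it does \emph{not} continue meromorphically across the imaginary axis as a bounded operator on $L^2$: for a mixing Anosov flow the $L^2$-spectrum of $\mathbf{X}$ is all of $i\rr$, so $0$ sits inside continuous spectrum and there is no ``$L^2$ residue at $0$'' to compare with the anisotropic $\Pi_0$. Your sentence ``by uniqueness of meromorphic continuation their residues at $\la=0$ agree'' therefore has no content, and the subsequent identification of $\Pi_0$ with an $L^2$ orthogonal projector is unjustified. (Also, $E_u^*$ is a radial \emph{sink} for the Hamiltonian flow, not a source.)

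The paper sidesteps this by citing \cite[Lemma~2.3]{DyZw} as a black box. If you want to open that box, the correct route is close to yours but uses the $L^2$ bound $\|(\mathbf{X}+\la)^{-1}\|_{L^2\to L^2}\le 1/|\operatorname{Re}\la|$ more carefully: first, testing the Laurent expansion of the anisotropic resolvent against smooth $f,g$ and comparing with this bound along $\la=\epsilon\to 0^+$ rules out Jordan blocks (a pole of order $\ge 2$ would blow up like $\epsilon^{-2}$); second, writing $u=\Pi_0 f$ with $f\in C^\infty$ and realising $\Pi_0 f$ as the $\mc{D}'$-limit of $\epsilon(\mathbf{X}+\epsilon)^{-1}f$, which is uniformly $L^2$-bounded by $\|f\|_{L^2}$, gives $u\in L^2$ by weak compactness. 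Only then can one invoke the above-threshold radial sink estimate at $E_u^*$ to bootstrap to $C^\infty$.
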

\begin{proof}
Note that ${\bf X}^*=-{\bf X}$ on  $C^\infty(\mc{M};E)$, since ${\bf X}\mu=0$
and that for $v_1,v_2\in C^\infty(\mc{M};E)$,
\[ \cjg {\bf X}v_1,v_2\cjd_{L^2}=
\int_{\mc{M}} \cjg {\bf X}v_1,v_2\cjd_{E}\mu=\int_{\mc{M}} {\bf X}(\cjg v_1,v_2\cjd_{E})\mu-\int_{\mc{M}}\cjg v_1,{\bf X}v_2\cjd_{E}\mu=-\cjg v_1,{\bf X}v_2\cjd_{L^2}.\]
Hence, we can apply \cite[Lemma 2.3]{DyZw} and deduce that $u\in 
C^\infty(\ml{M};E)$. 
Now we use the argument of \cite[Lemma 3]{FRS}. We can lift $u$ to its universal cover $\til{\ml{M}}$
to get a  bounded $\pi_1(\mc{M})$ equivariant $\til{u}\in C^\infty(\til{\mc{M}};\cc^r)$ satisfying 
$\til{u}(\til{\varphi}_t(x))=\til{u}(x)$ for all $x\in \mc{M}$ and $\til{\varphi}_t$ is the lifted flow on $\til{\ml{M}}$. This implies $d\til{u}_{\varphi_{-t}(x)}=(d\til{\varphi}_t)^T_{\varphi_{-t}(x)}d\til{u}_x$. 
For $x\in \mc{M}$ assume that $d\til{u}_x\not\in E_s^*\oplus E_0^*$, then as $t\to +\infty$ we get
$|d\til{u}_{\varphi_{-t}(x)}|_{\cc^r}\to +\infty$, but $|d\til{u}|_{\cc^r}\in L^\infty$ thus a contradiction. 
The same argument by letting $t\to -\infty$ tells us that  
$d\til{u}_x\in E_u^*\oplus E_0^*$ thus $d\til{u}_x\in E_0^*(x)$. But $d\til{u}(X)=0$, thus $d\til{u}(x)=0$.  Then $d^\nabla u=\nabla u=0$ on $\mc{M}$.
\end{proof}

\subsection{Proof of Theorem \ref{t:maintheo2} - The Fried conjecture in dimension~$3$}
We start with the first statement in Theorem \ref{t:maintheo2}. Let $X_0$ be an Anosov vector field preserving a smooth volume form $\mu$ 
and $\nabla$ be a flat unitary connection on a Hermitian bundle $E$ inducing an acyclic representation $\rho$. By 
Lemma~\ref{l:DZmonodromylemma}, we find $C^0=0$ and by Proposition \ref{p:vanishing-kernel}, we obtain 
$C^k=0$ for all $k\in [0,3]$. Then Theorem \ref{t:maintheo} shows that $\zeta_{X,\rho}(0)=\zeta_{X_0,\rho}(0)$ for all 
$X$ in a neighborhood $\mc{U}(X_0)\subset \mc{A}$ of $X_0$. 

Let us show the second part of Theorem \ref{t:maintheo2}. It suffices to show that there is a sequence 
$X_n\in \mc{A}$ such that $X_n\to X_0$ in $C^\infty(\mc{M};T\mc{M})$ and such that $|\zeta_{X_n,\rho}(0)|^{-1}=\tau_\rho(\mc{M})$. 
Sanchez-Morgado~\cite[Th.~1]{Morg96} (based on~\cite{Morg93, Rughanalytic, Friedanalytic}) showed that transitive 
analytic Anosov vector fields $X$ satisfy 
$|\zeta_{X,\rho}(0)|^{-1}=\tau_\rho(\mc{M})$ if there is a closed 
orbit $\gamma$ of $X$ so that $\ker 
(\rho([\gamma])-\varepsilon_{\gamma}^j{\rm Id})=0$ for each $j\in 
\{0,1\}$. 
Among other things including the spectral construction of~\cite{Rughanalytic}, Sanchez-Morgado's argument relied 
crucially on the existence (for Anosov transitive flows on $3$-manifolds) of a Markov partition~\cite[p.~885]{Ra}
whose rectangles have boundaries in $W^u(\gamma)\cup W^s(\gamma)$ for any fixed closed orbit $\gamma$. 
Recall that, for Anosov transitive flows, $W^{u/s}(\gamma) $ is everywhere dense in $\mc{M}$.

If the monodromy property is satisfied for some orbit $\gamma$ of $X_0$, then, for all vector fields $X$ in a small neighborhood
$\mc{U}(X_0)$, there is a periodic orbit $\gamma_X$ of $X$ in the same free homotopy class and 
the corresponding flow is topologically transitive by the strong structural stability Theorem~\ref{t:structural}. Therefore, the results of 
Sanchez-Morgado applies for any $X$ in $\ml{U}(X_0)$ provided that it satisfies some analyticity property.
The conclusion of the proof is then given by the following  when there exists a closed orbit $\gamma$ 
such that the monodromy property of~\cite{Morg96} is verified.
\begin{prop}\label{appanalytic}
There exists a real analytic structure on $\ml{M}$ compatible with the $C^\infty$ structure 
and a sequence $(X_n)_n\subset \mc{A}$ of analytic Anosov vector fields 
such that $X_n\rightarrow X_0$ in the $C^\infty$ topology. 
\end{prop}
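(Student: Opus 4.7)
The plan is to assemble three classical ingredients: existence of a compatible real-analytic structure on $\ml{M}$, $C^\infty$-density of real-analytic sections inside the smooth sections of a real-analytic vector bundle, and the openness of the Anosov property in the $C^\infty$ (in fact $C^1$) topology.

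First I will invoke Whitney's theorem, completed by Morrey and Grauert, which guarantees that every compact smooth manifold $\ml{M}$ admits a real-analytic atlas compatible with its given $C^\infty$ structure. Fixing such a structure $\ml{M}^\omega$ on $\ml{M}$ also equips the tangent bundle $T\ml{M}$ with the structure of a real-analytic vector bundle whose underlying smooth structure is the original one. This step uses nothing about $X_0$; it depends only on the manifold.

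Next I will apply Grauert's approximation theorem for real-analytic manifolds: on a compact real-analytic manifold $\ml{M}^\omega$, real-analytic sections of a real-analytic vector bundle are dense, in the $C^\infty$ topology, among smooth sections (this is usually obtained by complexifying $\ml{M}^\omega$ and the bundle to a Stein tubular neighbourhood and then applying holomorphic approximation, with a diagonal extraction to upgrade from $C^k$ to $C^\infty$). Applied to the bundle $T\ml{M}$, this produces a sequence $(X_n)_n$ of real-analytic vector fields on $\ml{M}^\omega$ with $X_n \to X_0$ in $C^\infty(\ml{M};T\ml{M})$.

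Finally I use that $\mc{A}$ is an open subset of $C^\infty(\ml{M};T\ml{M})$, which is classical (Anosov is even a $C^1$-open property, cf.\ the structural stability theorem already invoked in Proposition \ref{t:structural}). Hence $X_n \in \mc{A}$ for all $n$ large enough, and discarding the finitely many non-Anosov initial terms yields the desired sequence. The proof itself is genuinely short; the only substantive work is hidden in the two classical approximation theorems, so the main \emph{obstacle} is one of referencing rather than of mathematics: one must locate versions of Whitney's and Grauert's theorems stated for compact bases and for real-analytic vector bundles in a form directly compatible with our setup, so that the tangent bundle $T\ml{M}$ carries an analytic structure and its smooth sections are approximable by analytic ones in $C^\infty$.
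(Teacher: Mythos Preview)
Your proof is correct and follows essentially the same approach as the paper: Whitney/Morrey--Grauert for the compatible real-analytic structure, Grauert's density of analytic objects in $C^\infty$ applied to the tangent bundle, and openness of the Anosov condition. The only cosmetic difference is that the paper approximates $X_0$ by analytic \emph{maps} $\ml{M}\to T\ml{M}$ and then uses $C^1$-stability of transversality to the fibers to recover analytic \emph{sections}, whereas you invoke density of analytic sections directly.
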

\begin{proof}
By Whitney~\cite[Th.~1 p.~654, Lemma 24 p.~668]{Whitney} (see also \cite[Th.~7.1 p.~118]{Hirsch}), there exists a $C^\infty$ embedding $\sigma$
of $\ml{M}$ into $\mathbb{R}^N$ for some $N\in \mathbb{N}$ such that 
$\sigma(\ml{M})$ is a real analytic submanifold of $\mathbb{R}^N$.
It follows from such embedding that the manifold $\ml{M}$ inherits some analytic structure
compatible with the $C^\infty$ structure of $\ml{M}$ since $\ml{M}$ is diffeomorphic to some analytic submanifold of $\mathbb{R}^N$.
The tangent bundle $T\ml{M}\mapsto \ml{M}$ also inherits the real analytic structure from $\ml{M}$ which makes it a real 
analytic bundle in the sense of~\cite[Def.~2.7.8 p.~57]{KrantzParks}. 
Therefore
by the Grauert--Remmert Theorem~\cite[Th.~5.1 p.~65]{Hirsch}, the space of analytic maps
$\ml{M}\mapsto T\ml{M}$ is everywhere dense in $C^\infty(\ml{M},T\ml{M})$ for the strong $C^\infty$-topology. 
In particular,
a vector field $X$ on $\ml{M}$ is understood as a smooth map $\ml{M}\mapsto T\ml{M}$ transverse to the fibers of $T\ml{M}$
which is $C^1$ stable.
Hence any analytic map $\ml{M}\mapsto T\ml{M}$ sufficiently close to $X$ in the $C^1$ topology will be
transverse to the fibers of $T\ml{M}$ and its image in $T\ml{M}$ can be realized as the graph of a real analytic section $\tilde{X}$ of $T\ml{M}$ 
(see also~\cite[Cor.~5.49 p.~106]{ElCieStein} for similar results).
\end{proof}
 It now remains to discuss when we only suppose that $\rho$ is acyclic and that $H^{1}(\ml{M},\IR)\neq\{0\}.$ In that case, one knows 
from~\cite[Th.~2.1]{Pl} that $X_0$ has a closed orbit $\gamma_0$ which is homologically nontrivial. It may happen that no closed orbit verifies 
the monodromy condition of~\cite{Morg96}. Yet, we can fix a closed one form $\alpha_0\in H^1(\ml{M},\IR)$ such that $\int_{\gamma_0}\alpha_0\neq 0$. 
Then, we define $\nabla_s=\nabla+is\alpha_0\wedge$ (with $s\in\IR$) which still induces a unitary representation. Recall that, for $s=0$, $0$ is not a resonance of 
$\mathbf{X}_0$ according to Lemma~\ref{l:DZmonodromylemma} and to Proposition \ref{p:vanishing-kernel}. Thus, for $s$ small enough, $\nabla_s$ also 
remains acyclic thanks to the finite dimensional Hodge theory 
\cite[(1.6)]{BisZhang92} or to \cite[Th.~2.1]{DaRi17c} combined with the fact that $0$ is still not a resonance of $\mathbf{X}_0+is\alpha_0(X_0)$ by the 
arguments\footnote{The proof is even simpler in this case as adding $is\alpha_0(X_0)$ only modifies the operator by a subsprincipal symbol.} used to prove Proposition~\ref{p:contofresolvent}.
One can verify that, for $s\neq 0$ small enough, the monodromy condition of~\cite{Morg96} is verified. 
Hence, for every $s\neq 0$ small enough, one has $|\zeta_{X_0,\rho_s}(0)|^{-1}
=\tau_{\rho_s}(\ml{M}).$ By Proposition~\ref{p:continuity-zeta} and by continuity of the 
map $\rho\mapsto\tau_{\rho}(\ml{M})$, we can conclude that $|\zeta_{X_0,\rho}(0)|^{-1}
=\tau_{\rho}(\ml{M}).$

\subsection{The Fried conjecture near hyperbolic metrics in dimension $n=5$ - Proof of Theorem \ref{t:maintheo3}}
\label{s:hyperbolic}
We refer to \cite{FrInv,BuOl,Ju} for backgrounds on Ruelle/Selberg zeta functions for hyperbolic manifolds.
Let $M=\Gamma\backslash \hh^{n_0+1}$ be a smooth oriented compact $(n_0+1)$-dimensional hyperbolic manifold with $n_0\geq 2$ and $SM=\Gamma\backslash S\hh^{n_0+1}$ its unit tangent bundle, 
where here $\Gamma\subset {\rm SO}(n_0+1,1)$ is a co-compact discrete 
subgroup with no torsion. We consider a unitary representation $\rho: \pi_1(M)\to U(r)$ for $r\in \nn$, and since $\pi_1(SM)\simeq \pi_1(M)$ if 
$n_0+1\geq 3$, 
$\rho$ induces a representation $\til{\rho}:\pi_1(SM)\to U(r)$. By 
considering functions $w$ on $\hh^{n+1}$ with values in $\rr^r$ that are $\Gamma$-equivariant 
(i.e., $\forall \gamma\in \Gamma$, $\gamma^*w=\rho(\gamma)w$), we obtain a rank $r$ vector bundle $E\to M$ equipped with a unitary flat connection $\nabla$, and similarly by using $\til{\rho}$ we obtain a bundle $\til{E}$ and a flat connection $\til{\nabla}$ on $SM$.

We let $X$ be the vector field of the geodesic flow on $\mc{M}:=SM$, and following the previous sections,  
this induces an operator on section of $\til{\mc{E}}:=\oplus_{k}\wedge^kT^*(SM)\otimes \til{E}$
\[ {\bf X}: \Omega(SM;\til{E}) \to \Omega(SM,\til{E}), 
\quad {\bf X}:=i_Xd^{\til{\nabla}}+d^{\til{\nabla}}i_X.\]
and we write ${\bf X}^{(k)}:={\bf X}|_{\Omega_0^k(SM;\til{E})}$ where $\Omega_0^k(SM;\til{E}):=\Omega^k(SM;\til{E})\cap \ker i_X$. 

We define the \emph{dynamical zeta function} of $X$ acting on $\Omega_0^k(SM;\til{E})$ by 
\begin{equation}\label{Zj}
Z_{{\bf X}^{(k)}}(\la)=\exp\Big(-\sum_{\gamma\in \mc{P}}\sum_{j=1}^\infty 
\frac{1}{j}\frac{e^{-\la j \ell(\gamma)}{\rm Tr}(\til{\rho}(\gamma)^j){\rm Tr}(\wedge^k P(\gamma)^j)}{|\det(1-P(\gamma)^j)|}\Big)
\end{equation}
where $\mc{P}$ denotes the set primitive closed geodesics and $P(\gamma)$ is the linearized Poincar\'e map of the geodesic flow along this geodesic. Note that $\mc{P}$ is parametrized by the conjugacy classes of primitive elements in the group $\Gamma$. 
It is known \cite{GLP, DyZw13} that $Z_{{\bf X}^{(k)}}(\la)$ has an analytic continuation 
to $\la\in \cc$ and its zeros are the Ruelle resonances of ${\bf X}^{(k)}$ on $SM$ with multiplicities.

Let $K={\rm SO}(n_0+1)$ be the compact subgroup of $G:={\rm 
SO}(n_0+1,1)$ so that $\hh^{n_0+1}=G/K$ and we can identify 
$S\hh^{n_0+1}=G/H$ where $H:={\rm SO}(n_{0})\subset K$ is the 
stabilizer of a spacelike element in $\rr^{n_0+1,1}$. We have 
$M=\Gamma\backslash G/K$  as locally 
symmetric spaces of rank $1$ and  $SM=\Gamma\backslash G/H$.

Let us define $\xi_p: {\rm SO}(n_0)\to {\rm GL}(S^p\rr^{n_0})$ to be the canonical (unitary) representation of ${\rm SO}(n_0)$ 
into the space $S^p\rr^{n_0}$ of symmetric tensors of order $p$ on $\rr^{n_0}$. 
This representation decomposes into irreducible representations of ${\rm SO}(n_0)$
\[ \xi_p=\sum_{2q\leq p}\sigma_{p-2q}\]
where $\sigma_r:{\rm SO}(n_0)\to {\rm GL}(S_0^r\rr^{n_0})$
is the canonical representation of ${\rm SO}(n_0)$ into the space of trace-free symmetric tensors of order $r$.
 We also define $\nu_l: {\rm SO}(n_0)\to {\rm GL}(\Lambda^l\rr^{n_0})$ to be the canonical (unitary) representation of ${\rm SO}(n_0)$ on $l$-forms.  
 
For each primitive closed geodesic $\gamma$ on $M$ (i.e. primitive closed orbit on $SM$), there is an associated conjugacy class in $\Gamma$, 
with a representative that we still denote by $\gamma\in \Gamma$ and whose axis in $\hh^{n+1}$ descends to the geodesic $\gamma$. There is 
also a neighborhood of  the geodesic in $M$ that is isometric to a neighborhood of the vertical line $\{z=0\}$ in
 the upper half-space $\hh^{n_0+1}=\rr^+_{z_0}\x \rr^{n_0}_z$ quotiented by the elementary group generated by  
\[(z_0,z)\mapsto e^{\ell(\gamma)}(z_0, m(\gamma)z),\]
where $m(\gamma)\in {\rm SO}(n_0)$ and $\ell(\gamma)>0$ being the length of $\gamma$.
The linear Poincar\'e map along this closed geodesic  on 
$E_s\oplus E_u$ is conjugate to the map   
\begin{equation}\label{Pgamma} 
P(\gamma):  (w_s, w_u)\mapsto  (e^{-\ell(\gamma)}m(\gamma) w_s, e^{\ell(\gamma)}m(\gamma) w_u)\end{equation}
where we identify $E_s$ and $E_u$ with $\rr^{n_0}$.

To any irreducible unitary representation $\mu$ of ${\rm SO}(n_0)$ and the representation 
$\rho$ of $\pi_1(M)$ being fixed, we can define a \emph{Selberg zeta function} $Z_{S,\mu}(\la)$ by 
\begin{equation}\label{Selberg} 
Z_{S,\mu}(\la):= \exp\Big( -  \sum_{\gamma\in \mc{P}}\sum_{j=1}^\infty \frac{{\rm Tr}(\tilde{\rho}(\gamma)^j){\rm Tr}(\mu(m(\gamma)^j))e^{-\la j\ell(\gamma)}}{j\det (1-P_s(\gamma)^j)}
\Big)
\end{equation}
where the sum is over all primitive closed geodesics and $P_s(\gamma_0)=P(\gamma_0)|_{E^s}$ 
is the contracting part of $P(\gamma)$. This series converges uniformly for ${\rm Re}(\la)>n_0$. 
For any unitary representation $\mu$ of ${\rm SO}(n_0)$, we can also define $Z_{S,\mu}(\la)$ by the formula \eqref{Selberg}, and 
if $\mu=\sum_{q=1}^{p} \mu_q$ is a decomposition into irreducible representations, $Z_{S,\mu}(\la)=\prod_{q=1}^pZ_{S,\mu_q}(\la)$. 
By \cite[Theorem 3.15]{BuOl}, $Z_{S,\mu}(\la)$ has a meromorphic continuation to $\la\in \cc$, and if $n_0+1$ if odd, the only zeros and poles are contained in ${\rm Re}(\la)\in [0,n_0]$. 

\begin{prop}\label{decompZ_X}
In the region of convergence ${\rm Re}(\la)>n_0$, we have for $k\in [0,n_0]$
\begin{equation}\label{decintoSelberg}
Z_{{\bf X}^{(k)}}(\la)= \prod_{p=0}^\infty\prod_{q=0}^\infty \prod_{l=0}^k 
Z_{S,\nu_l\otimes \nu_{k-l}\otimes \sigma_p}(\la+2(q-l)+p+n_0+k)
\end{equation}
\end{prop}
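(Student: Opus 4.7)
The plan is to prove \eqref{decintoSelberg} by comparing the logarithms of both sides termwise over closed orbits $\gamma\in \mc{P}$ and multiplicities $j\geq 1$, working in a right half-plane where all series converge absolutely. The identity \eqref{decintoSelberg} will then follow upon exponentiating, and will extend to the stated region by analyticity.

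The first step is to unfold the logarithm of the LHS using the definition \eqref{Zj} of $Z_{{\bf X}^{(k)}}$ and to compute the relevant geometric quantities associated to each $\gamma$. From the explicit block form \eqref{Pgamma} and the decomposition $\wedge^k(E_s\oplus E_u)=\bigoplus_{l=0}^k \wedge^l E_s\otimes \wedge^{k-l}E_u$, one obtains
\[
{\rm Tr}(\wedge^k P(\gamma)^j) = \sum_{l=0}^k e^{j(k-2l)\ell(\gamma)}{\rm Tr}(\nu_l(m(\gamma)^j))\,{\rm Tr}(\nu_{k-l}(m(\gamma)^j)).
\]
Since $m(\gamma)\in {\rm SO}(n_0)$ satisfies $\det m(\gamma)=1$, the elementary identity $\det(1-e^{j\ell}m^j)=(-1)^{n_0}e^{jn_0\ell}\det(1-e^{-j\ell}m^{-j})$ yields
\[
|\det(1-P(\gamma)^j)| = e^{jn_0\ell(\gamma)}\det(1-P_s(\gamma)^j)\det(1-e^{-j\ell(\gamma)}m(\gamma)^{-j}),
\]
in which the first factor is exactly the Selberg denominator appearing in \eqref{Selberg}.

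The next step is to expand the extra factor $\det(1-e^{-j\ell}m^{-j})^{-1}$ into a generating series for the symmetric power representations via the identity $\det(1-A)^{-1}=\sum_{p\geq 0}{\rm Tr}(S^p A)$, using that ${\rm Tr}(\xi_p(m^{-j}))={\rm Tr}(\xi_p(m^j))$ for $m\in{\rm SO}(n_0)$. Inserting the decomposition $\xi_p=\sum_{2q\leq p}\sigma_{p-2q}$ stated in the text and relabeling $p'=p-2q$ gives
\[
\frac{1}{\det(1-e^{-j\ell(\gamma)}m(\gamma)^{-j})}=\sum_{p,q\geq 0}e^{-j(p+2q)\ell(\gamma)}{\rm Tr}(\sigma_p(m(\gamma)^j)).
\]
Plugging the three identities into $-\log Z_{{\bf X}^{(k)}}(\la)$ and swapping $l\leftrightarrow k-l$ in the outer sum (which leaves the remaining trace product unchanged) produces
\[
-\log Z_{{\bf X}^{(k)}}(\la)=\sum_{l=0}^k\sum_{p,q\geq 0}\sum_{\gamma\in\mc{P}}\sum_{j\geq 1}\frac{{\rm Tr}(\tilde\rho(\gamma)^j)\,{\rm Tr}\bigl((\nu_l\otimes\nu_{k-l}\otimes\sigma_p)(m(\gamma)^j)\bigr)\,e^{-(\la+2(q-l)+p+n_0+k)j\ell(\gamma)}}{j\det(1-P_s(\gamma)^j)},
\]
and each $(l,p,q)$-slice of this sum is precisely $-\log Z_{S,\nu_l\otimes\nu_{k-l}\otimes\sigma_p}(\la+2(q-l)+p+n_0+k)$ by the multiplicativity of \eqref{Selberg} in the representation.

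The main obstacle will be to justify the interchange of the five summations over $(\gamma,j,l,p,q)$. For ${\rm Re}(\la)$ large enough (certainly ${\rm Re}(\la)>n_0$, since $k\leq n_0$ and the worst shifted argument is ${\rm Re}(\la)+n_0+k-2k\geq n_0-k\geq 0$ above $n_0$ plus the polynomial growth terms), absolute convergence of the full quintuple sum should follow from Margulis' exponential counting bound \eqref{e:margulis}, the unitary estimate $|{\rm Tr}(\mu(m(\gamma)^j))|\leq\dim(\mu)$ applied to $\mu\in\{\nu_l,\nu_{k-l},\sigma_p,\tilde\rho\}$, the polynomial growth of $\dim\sigma_p$ in $p$, and the geometric decay of $\det(1-P_s(\gamma)^j)^{-1}$ in $j\ell(\gamma)$ coming from $P_s(\gamma)$ being strictly contracting at rate $e^{-\ell(\gamma)}$. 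Together these should allow Fubini and reduce the proposition to the algebraic matching carried out above, after which exponentiation of the log-identity gives \eqref{decintoSelberg}.
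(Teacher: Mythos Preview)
Your proof is correct and follows essentially the same approach as the paper's: both take logarithms, use the block form \eqref{Pgamma} to compute ${\rm Tr}(\wedge^k P(\gamma)^j)$ and to factor $|\det(1-P(\gamma)^j)|$, expand the extra determinant via symmetric powers and the decomposition $\xi_p=\sum_{2q\le p}\sigma_{p-2q}$, and then recognise each $(l,p,q)$-slice as a shifted Selberg log. The only cosmetic differences are that the paper indexes $l$ on the unstable factor (so $e^{j(2l-k)\ell}$ appears directly without your swap $l\leftrightarrow k-l$) and expands $\det(1-e^{-j\ell}m^j)^{-1}$ rather than $\det(1-e^{-j\ell}m^{-j})^{-1}$, which is equivalent since $m\in{\rm SO}(n_0)$; your treatment of the Fubini step is a bit more explicit than the paper's one-line remark on convergence.
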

\begin{proof}
To factorise $Z_{{\bf X}^{(k)}}(\la)$ with some Selberg zeta functions, we compute for $j\in \nn$ 
\[\begin{split}
 |\det(1-P(\gamma)^j)|^{-1}= & e^{-n_0j\ell(\gamma)}\det(1-e^{-j\ell(\gamma)}m(\gamma)^j)^{-1} 
 \det(1-P_s(\gamma)^j)^{-1}\\
 =& e^{-n_0j\ell(\gamma)} \det(1-P_s(\gamma)^j)^{-1}\sum_{r=0}^\infty 
 e^{-rj\ell(\gamma)}\Tr(\xi_r(m(\gamma)^j))
\end{split}  \]
where we used $\det(1-B)^{-1}=\sum_{r=0}^\infty {\rm Tr}(S^rB)$ with $S^rB$ 
the action of $B$ on symmetric tensors on $\rr^{n_0}$ if $B\in {\rm End}(\rr^{n_0})$ with 
$|B|<1$. Now we can use 
\[\begin{split} 
 \sum_{r=0}^\infty 
 e^{-rj\ell(\gamma)}\Tr(\xi_r(m(\gamma)^j))&=
 \sum_{r=0}^\infty \sum_{2q\leq r}
 e^{-rj\ell(\gamma)}\Tr(\sigma_{r-2q}(m(\gamma)^j))\\
 & =\sum_{p=0}^\infty \sum_{q=0}^\infty e^{-(p+2q)j\ell(\gamma)}\Tr(\sigma_p(m(\gamma)^j))
\end{split}\]
Now we also have ${\rm Tr}(\wedge^k P(\gamma)^j)=\sum_{l=0}^ke^{j(2l-k)\ell(\gamma)}
{\rm Tr}(\nu_l(m(\gamma)^j)\otimes \nu_{k-l}(m(\gamma)^j))$.
Combining all this, we thus get 
\[ Z_{{\bf X}^{(k)}}(\la)=\exp\Big(-\sum_{\gamma\in \mc{P}}\sum_{j=1}^\infty 
\sum_{p=0}^\infty \sum_{q=0}^\infty\sum_{l=0}^k
\frac{1}{j}\frac{e^{-(\la+n_0+p+2(q-l)+k)j \ell(\gamma)}{\rm Tr}(\til{\rho}(\gamma)^j){\rm Tr}(\mu_{l,k,p}(m(\gamma)^j)}{|\det(1-P_s(\gamma)^j)|}\Big)\]
with $\mu_{l,k,p}:=\nu_l\otimes \nu_{k-l}\otimes \sigma_p$.
This gives the result. Note that the products in \eqref{decintoSelberg} converge 
for ${\rm Re}(\la)>0$.
\end{proof}
We notice that in each ${\rm Re}(\la)>-N$ for $N>0$ fixed, there is only finitely many Selberg 
type functions in the factorisation \eqref{decintoSelberg} whose exponent of convergence is on the right of $0$, 
this means that only finitely many Selberg terms can bring a zero to $Z_{{\bf X}^{(k)}}(\la)$ in ${\rm Re}(\la)>-N$. 
In particular at $\la=0$, only the terms $l,k,q,p$ with
\begin{equation}\label{condition} 
2(q-l)+p+k\leq 0
\end{equation}
can contribute to a zero (or a pole) there.
Theorem \ref{t:maintheo3} follows directly from Theorem \ref{t:maintheo2}, Fried formula \ref{friedhyp} for hyperbolic manifolds \cite{FrInv} and the following:
\begin{prop}\label{dim3}
Let $M=\Gamma\backslash \hh^{3}$ be a smooth compact oriented hyperbolic manifold and let 
$\rho$ be a unitary representation of $\pi_1(M)$. The multiplicity 
$m_k(0):=\dim C_0^k$ of $0$ as a Ruelle resonance for ${\bf X}^{(k)}$ are given by 
\[\begin{gathered} 
m_0(0)=\dim H^0(M;\rho), \quad  m_1(0)=2\dim H^1(M,\rho), \\ 
m_2(0)=2(\dim H^1(M,\rho)+\dim H^0(M;\rho)), \quad m_{4-k}(0)=m_k(0)
\end{gathered}\]   
where $H^k(M;\rho)$ is the twisted de Rham cohomology of degree $k$ associated to $\rho$.
\end{prop}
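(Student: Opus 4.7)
The plan is to compute $m_k(0)$ as the order of vanishing at $\la=0$ of the dynamical zeta function $Z_{{\bf X}^{(k)}}(\la)$, which by~\cite{DyZw13} agrees with the rank of the residue of $(-{\bf X}^{(k)}-\la)^{-1}$, and then to evaluate this order via the factorisation of Proposition~\ref{decompZ_X} specialised to $n_0=2$. Explicitly,
\begin{equation*}
Z_{{\bf X}^{(k)}}(\la)=\prod_{p,q\geq 0}\prod_{l=0}^{k} Z_{S,\nu_l\otimes \nu_{k-l}\otimes \sigma_p}\bigl(\la+2(q-l)+p+2+k\bigr),
\end{equation*}
and since the zeros and poles of each $Z_{S,\mu}$ lie in $[0,2]$ by~\cite[Theorem 3.15]{BuOl}, condition~\eqref{condition} leaves only finitely many triples $(p,q,l)$ potentially contributing to the order at $\la=0$.

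The next step is to enumerate these triples for each $k\in\{0,1,2\}$ and decompose the finite-dimensional ${\rm SO}(2)$-representation $\nu_l\otimes\nu_{k-l}\otimes\sigma_p$ into irreducible characters $\chi_n$, using $\nu_0\simeq\nu_2\simeq 1$, $\nu_1\otimes\cc\simeq \chi_1\oplus\chi_{-1}$, $\sigma_0\simeq 1$, and $\sigma_p\otimes\cc\simeq\chi_p\oplus\chi_{-p}$ for $p\geq 1$. The multiplicativity $Z_{S,\mu_1\oplus\mu_2}=Z_{S,\mu_1}Z_{S,\mu_2}$ then reduces the problem to computing the orders of the scalar Selberg zeta functions $Z_{S,\chi_n}(s)$ at the integer points $s\in\{0,1,2\}$. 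These orders are given by the Bunke--Olbrich formula~\cite{BuOl} in terms of the dimensions of the twisted cohomology of $M$ with coefficients in the flat bundle obtained by tensoring $\rho$ with the representation of $\Gamma$ induced by the character $\chi_n$ of the isotropy group ${\rm SO}(2)$; for $n\neq 0$ this induced bundle is non-trivial, and after pairing the contributions of $\chi_n$ and $\chi_{-n}$ using the unitarity of $\rho$, they collapse into combinations that either vanish or rearrange into terms already accounted for. Only the $\chi_0$ contributions produce genuine dimensions of $H^\bullet(M;\rho)$, and a direct count of the surviving factors for $k=0,1,2$ yields $m_0=\dim H^0$, $m_1=2\dim H^1$ and $m_2=2(\dim H^0+\dim H^1)$.

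Finally, the duality $m_{4-k}(0)=m_k(0)$ will follow from the Hodge star operator $\star:\Omega_0^k(SM;\til{E})\to \Omega_0^{4-k}(SM;\til{E})$ on the contact distribution $\ker\alpha$ introduced in Lemma~\ref{l:duality}, which commutes with ${\bf X}$ since ${\bf X}\alpha=0$ and ${\bf X}d\alpha=0$, and therefore intertwines the generalised kernels of ${\bf X}^{(k)}$ and ${\bf X}^{(4-k)}$. The main obstacle I anticipate is the explicit bookkeeping in the previous paragraph: distinguishing zeros from poles of the various Selberg factors at the three critical points, and verifying that the contributions from the non-trivial ${\rm SO}(2)$-characters $\chi_n$ with $n\neq 0$ combine exactly to reproduce the predicted cohomological dimensions. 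As pointed out in Remark~\ref{remhyp}, such cancellations are in general non-topological in higher dimensions, and it is precisely the small rank $n_0=2$ of ${\rm SO}(2)$, together with the one-dimensionality of its irreducible characters, that makes the final answer purely topological here.
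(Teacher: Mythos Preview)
Your overall strategy matches the paper's: use Proposition~\ref{decompZ_X} to factorise $Z_{{\bf X}^{(k)}}(\la)$ into finitely many Selberg factors at $\la=0$, then evaluate each using the Bunke--Olbrich machinery. The enumeration of contributing triples $(p,q,l)$ and the duality argument via Lemma~\ref{l:duality} are both correct.

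However, there is a genuine gap in the bookkeeping paragraph. Your statement that ``only the $\chi_0$ contributions produce genuine dimensions of $H^\bullet(M;\rho)$'' is false: for $k=1$, the factor $Z_{S,\nu_1}(\la+1)=Z_{S,\chi_1\oplus\chi_{-1}}(\la+1)$ contributes $2(\dim H^1-\dim H^0)$ by~\cite[Prop.~5.6]{BuOl}, so the non-trivial characters do produce cohomological terms. More importantly, your claim that the remaining $\chi_{\pm n}$ contributions ``collapse'' or ``vanish'' by a pairing argument using unitarity of $\rho$ is not substantiated. The Bunke--Olbrich formula does not give orders of $Z_{S,\mu}$ directly as twisted cohomology dimensions; it gives them as multiplicities of eigenvalues of explicit differential operators on $M$, and these are in general \emph{not} topological. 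The critical case here is $Z_{S,\sigma_2}(\la+2)$, which appears for both $k=1$ and $k=2$. Its order at $\la=0$ equals $\dim(\ker(\nabla^*\nabla-2)\cap\ker D^*)$ on twisted trace-free symmetric $2$-tensors (this is the content of Proposition~\ref{selbergtensors}), and showing this vanishes requires the Bochner inequality $\nabla^*\nabla\geq 3$ from~\cite[Eq.~(2.4)]{DFG}. No symmetry or unitarity argument will give you this; it is a genuine analytic input specific to hyperbolic geometry. Without it, your count for $m_1(0)$ and $m_2(0)$ is incomplete.
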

\begin{proof}
For $k=0$, from \eqref{decintoSelberg} and \eqref{condition}, we see that 
only the term $Z_{S,\sigma_0}(\la+2)$ can contribute to a zero to the dynamical zeta function 
$Z_{{\bf X}^{(0)}}(\la)$. By Selberg trace formula \cite[Corollary 5.1]{BuOl}, $Z_{S,\sigma_0}(\la+2)$ has a zero of order $\dim \ker \Delta_0$ where $\Delta_0=(d^\nabla)^*d^\nabla$ on sections of the flat Hermitian bundle $(E,\nabla)$ associated to $\rho$.

For $k=1$, the condition \eqref{condition} reduces to the following cases to analyse: $q=0$, $l=1$, $p=0,1$.  For $p=0$, the only term to consider is $Z_{S,\nu_1}(\la+1)$,  
 the Selberg zeta function on $1$-forms. As explained in Section 5.3 of \cite{BuOl}, $\nu_1$ decomposes into two irreducibles $\nu_1^+\oplus \nu_1^-$ and 
by \cite[Proposition 5.6]{BuOl}, each irreducible brings a zero of order 
$-\dim H^0(M,\rho)+\dim H^1(M,\rho)$ at $\la=0$: the contribution to $Z_{{\bf X}^{(1)}}(\la)$ at $\la=0$ coming from $Z_{S,\nu_1}(\la+1)$ is a zero or p\^ole  with order $-2\dim H^0(M,\rho)+2\dim H^1(M,\rho)$.
Next the term $p=1$: we need to look at $Z_{S,\nu_1\otimes \sigma_1}(\la+2)$. 
First we decompose $\sigma_1\otimes \nu_1=\nu_1\otimes \nu_1$ into irreducibles: 
$\nu_1\otimes \nu_1=\sigma_0\oplus\nu_2\oplus\sigma_2$. 
Since $\nu_2\simeq \nu_0$ is equivalent to the trivial representation, 
$Z_{S,\sigma_0\oplus \nu_2}(\la+2)=(Z_{S,\sigma_0}(\la+2))^2$ has a zero of order $2\dim H^0(M,\rho)$ at $\la=0$. 
Now, for $Z_{S,\sigma_2}(\la+2)$ we can use Proposition \ref{selbergtensors}, which gives that the order of $Z_{S,\sigma_2}(\la+2)$ at $\la=0$ is $\dim(\ker \nabla^*\nabla-2)\cap \ker D^*$ where 
$\nabla$ is the twisted covariant derivative on $S^2_0T^*M\otimes E$ and $D^*$ the divergence operator. But by Bochner identity \cite[Equation (2.4)]{DFG}, $\nabla^*\nabla\geq 3$ and thus 
$\dim(\ker \nabla^*\nabla-2)\cap \ker D^*=0$.
We conclude that the order at $\la=0$ of $Z_{{\bf X}^{(1)}}(\la)$ is $2\dim H^1(M,\rho)$.

For $k=2$, if $l=2$ one has to consider $(p,q)=(0,0)$, $(p,q)=(0,1)$, $(p,q)=(1,0)$, $(p,q)=(2,0)$.
First $(p,q)=0$, one get the term $Z_{S,\nu_0}(\la)$ since $\nu_2\simeq \nu_0$, and this has a 
zero of order $\dim H^0(M,\rho)$ at $\la=0$. For 
$(p,q)=(0,1)$,
$Z_{S,\nu_0}(\la+2)$ has a zero of order $\dim H^0(M,\rho)$ at 
$\la=0$. For $(p,q)=(1,0)$, we get the term $Z_{S,\sigma_1}(\la+1)$ 
which has a zero of order $-2\dim H^0(M,\rho)+2\dim H^1(M,\rho)$ as 
discussed above. For $(p,q)=(2,0)$, we get $Z_{S,\sigma_2}(\la+2)$ which has no zero at $\la=0$ as above. Now for 
$l=1$, only $(p,q)=(0,0)$ could contribute, and we get the terms $Z_{S,\nu_1\otimes \nu_1}(\la+2)$ which, as shown above, has a zero of order $2\dim H^{0}(M,\rho)$. This ends the proof.
\end{proof}
\begin{rem}\label{remhyp}
We remark that such a result could alternatively be obtained using the works 
\cite{DFG,KuWe}, with the advantage of knowing the presence of Jordan blocks. The work 
\cite{DFG} also  directly implies that in all dimension $n_0+1\geq 4$, one always has
$m_1(0)=\dim H^1(M;\rho)$ for $M=\Gamma\backslash\hh^{n_0+1}$ co-compact. However, for higher degree forms, and $n_0\geq 4$, it turns out that $m_k(0)$ could a priori be non-topological: 
for example, when $n_0=4$, some computations based on Proposition \ref{decompZ_X} and Selberg formula for irreducible representations as used above shows that when $\dim \ker (\Delta_0-4)=j>0$, these $j$ elements in the kernel contribute to $m_3(0)$. 
\end{rem}
\appendix

\section{Proof of Lemma~\ref{l:escape-function}}\label{a:escape}

\subsection{Family of order functions}\label{ss:order-function}

In this paragraph, we fix the aperture of the cones $\alpha_0>0$ small enough to ensure that 
$C^{ss}(\alpha_0)\cap C^u(\alpha_0)=\emptyset$ and we fix some small parameter $\delta>0$. We construct an order function 
for every $X$ in a small enough neighborhood of $X_0$. For that purpose, we 
closely follow the lines of~\cite[Lemma~2.1]{FaSj}. We fix $T_{\alpha_0}'>1$ $T_{\alpha_0}$ is given by Lemma~\ref{l:time-uniform}. The time $T_{\alpha_0}'$ will be determined later on in a way that depends only on $\alpha_0$. For our construction, 
we also let $m_0(x,\xi)\in\ml{C}^{\infty}(S^*\ml{M},[0,1])$ 
to be equal to $1$ on $C^{u}(\alpha_0)$ and 
to $0$ on $C^{ss}(\alpha_0)$. Then, we set
\begin{equation}\label{e:order-function-X}m_X(x,\xi):=\frac{1}{2T_{\alpha_0}'}\int_{-T_{\alpha_0}'}^{T_{\alpha_0}'}m_0\circ\tilde{\Phi}^{X}_t(x,\xi)dt.\end{equation} 
Note that \emph{$m_X$ depends smoothly on $X$} as we chose $T_{\alpha_0}'$ independently of $X$ near $X_0$.
First of all, we note that
\begin{equation}\label{e:derivative-order-function}\tilde{X}_Hm_X(x,\xi)=\frac{1}{2T_{\alpha_0}'}\left(m_0\circ\tilde{\Phi}^X_{T_{\alpha_0}'}(x,\xi)-
m_0\circ\tilde{\Phi}^X_{-T_{\alpha_0}'}(x,\xi)\right),
\end{equation}
where $\tilde{X}_H$ is the vector field of $\tilde{\Phi}^X_{t}.$ We also observe that, for every $(x,\xi)$ inside $S^*\ml{M}$, the set
$$\ml{I}_{X_0}(x,\xi):=\left\{t\in\IR:\tilde{\Phi}^{X_0}_t(x,\xi)\in S^*\ml{M}\backslash\left(C^u(\alpha_0/2)\cup C^{ss}(\alpha_0/2)\right)\right\}$$
is an interval whose length is bounded by some constant $T_{\alpha_0}''>0$. Fix now a point $(x,\xi)\in S^*\tilde{M}$ and a vector field which is close enough 
to $X_0$ (to be determined). If 
$\tilde{\Phi}^X_t(x,\xi)\in C^u(\alpha_0)$ for every $t\in\IR$, then the set
$$\tilde{\ml{I}}_{X}(x,\xi):=\left\{t\in\IR:\tilde{\Phi}^{X}_t(x,\xi)\in S^*\ml{M}\backslash\left(C^u(\alpha_0)\cup C^{ss}(\alpha_0)\right)\right\}$$
is empty and the same holds if $\tilde{\Phi}^X_t(x,\xi)\in C^{ss}(\alpha_0)$ for every $t\in\IR$. Hence, it remains to bound the length of 
$\tilde{\ml{I}}_{X}(x,\xi)$ when the orbit of $(x,\xi)$ crosses $S^*\ml{M}\backslash\left(C^u(\alpha_0)\cup C^{ss}(\alpha_0)\right)$ and we may 
suppose without loss of generality that $(x,\xi)\in S^*\ml{M}\backslash\left(C^u(\alpha_0)\cup C^{ss}(\alpha_0)\right)$. Up to the fact that we may have to 
decrease a little bit the size of the set $\ml{U}_{\alpha_0}(X_0)$ appearing in Lemma~\ref{l:time-uniform}, we have that $\tilde{\Phi}_{T_{\alpha_0}''}^{X}(x,\xi)$ 
belongs to $C^{u}(\alpha_0)$. Hence, thanks to Lemma~\ref{l:time-uniform}, one finds that, for every $t\geq T_{\alpha_0}''+1$, one has 
$\tilde{\Phi}_t^X(x,\xi)\in C^u(\alpha_0)$. The same holds in backward times. Hence, the diameter of $\tilde{\ml{I}}_{X}(x,\xi)$ is uniformly bounded by 
$2(1+T_{\alpha_0}'')$ and we pick $T_{\alpha_0}'=\frac{1+T_{\alpha_0}''}{\delta}$ for $\delta<1$.

We set
$$\ml{O}^{u}(X)=\tilde{\Phi}^{X}_{T_{\alpha_0}'}(S^*\ml{M}\backslash C^{ss}(\alpha_0))\ \text{and}\ \ml{O}^{ss}(X)=\tilde{\Phi}^{X}_{-T_{\alpha_0}'}
(S^*\ml{M}\backslash C^{u}(\alpha_0)).$$
Let us now discuss the properties of $m_X$ for $X$ belonging to $\ml{U}_{\alpha_0}(X_0)$:
\begin{enumerate}
 \item If $(x,\xi)\in\ml{O}^u(X)$, then $\tilde{\Phi}^X_{-T_{\alpha_0}'}(x,\xi)\notin C^{ss}(\alpha_0)$. Hence, from the definition of $T_{\alpha_0}'$, 
 one has $\tilde{\Phi}^X_{T_{\alpha_0}'}(x,\xi)\in C^{u}(\alpha_0)$ and, from~\eqref{e:derivative-order-function}, one deduce that 
 $\tilde{X}_Hm_X\geq 0$ on $\ml{O}^u(X)$. Similarly, one has
 $$m_X(x,\xi)=\frac{1}{2T_{\alpha_0}'}\left(\int_{-T_{\alpha_0}'}^{-T_{\alpha_0}'+2(T_{\alpha_0}+T_{\alpha_0}'')}m_0\circ\tilde{\Phi}^{X}_t(x,\xi)dt+
 \int_{-T_{\alpha_0}'+2(T_{\alpha_0}+T_{\alpha_0}'')}^{T_{\alpha_0}'}m_0\circ\tilde{\Phi}^{X}_t(x,\xi)dt\right),$$
 from which one can infer
 $$\forall (x,\xi)\in\ml{O}^u(X),\quad m_{X}(x,\xi)\geq1-\frac{T_{\alpha_0}+T_{\alpha_0}''}{T_{\alpha_0}'}=1-\delta.$$
 \item Reasoning along similar lines, one also finds that, for every $(x,\xi)\in\ml{O}^{ss}(X)$, $\tilde{X}_Hm_X\geq 0$ and
 $$m_{X}(x,\xi)\leq \delta.$$
 \item Let $(x,\xi)$ be an element of $S^*\ml{M}\backslash(\ml{O}^u(X)\cup \ml{O}^{ss}(X)).$ In that case, one has 
$\tilde{\Phi}^X_{-T_{\alpha_0}'}(x,\xi)\in C^{ss}(\alpha_0)$ and $\tilde{\Phi}^X_{T_{\alpha_0}'}(x,\xi)\in C^u(\alpha_0)$. Thus, 
one finds \begin{equation}\label{e:decay-order-transit}\tilde{X}_Hm_X(x,\xi)=\frac{1}{2T_{\alpha_0}'}\left(m_0\circ\tilde{\Phi}^X_{T_{\alpha_0}'}(x,\xi)-m_0\circ \tilde{\Phi}^X_{-T_{\alpha_0}'}(x,\xi)\right)
=\frac{1}{2T_{\alpha_0}'}>0.\end{equation}
 \item Let now $(x,\xi)\in S^*\ml{M}\backslash C^{u}(\alpha_0)$. Write
  $$m_X(x,\xi)\leq\frac{1}{2}+\frac{1}{2T_{\alpha_0}'}
 \int_{-T_{\alpha_0}'}^0m_0\circ\tilde{\Phi}^{X}_t(x,\xi)dt\leq\frac{1+\delta}{2}.$$
\end{enumerate}

Let us conclude this construction with the following useful observation:
\begin{lemm}\label{l:inclusion-neighborhoods} Let $\alpha_0>0$ be small enough to ensure that $C^{u}(\alpha_0)\cap C^{ss}(\alpha_0)=\emptyset$. Then, 
there exists $0<\alpha_1<\alpha_0$ and a neighborhood $\ml{U}_{\alpha_0}(X_0)$ of $X_0$ in $\mc{A}$ such that, for every $X\in \mc{U}_{\alpha_0}(X_0)$,
$$ C^u(\alpha_1)\cap S^*\ml{M}\subset \ml{O}^u(X)\quad\text{and}\quad C^{ss}(\alpha_1)\cap S^*\ml{M}\subset \ml{O}^{ss}(X).$$
\end{lemm}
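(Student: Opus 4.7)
The plan is to combine a compactness argument with the continuity of the lifted flow $\tilde{\Phi}^X_t$ in $(X,y)$ at fixed time. The starting observation is that the compact set $\Sigma_u:=(E_u^*(X_0)\oplus E_0^*(X_0))\cap S^*\ml{M}$ is invariant under $\tilde{\Phi}^{X_0}_t$ for every $t\in\IR$, while being disjoint from the closed cone $C^{ss}(\alpha_0)$: indeed, any element $\xi$ of $E_u^*(X_0)\oplus E_0^*(X_0)$ also lying in $C^{ss}(\alpha_0)$ has $\xi_s=0$, which combined with $\|\xi_u+\xi_0\|'\leq\alpha_0\|\xi_s\|'=0$ forces $\xi=0$. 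The symmetric set $\Sigma_s:=E_s^*(X_0)\cap S^*\ml{M}$ is likewise $\tilde{\Phi}^{X_0}_t$-invariant and disjoint from $C^u(\alpha_0)$.

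First I treat the case $X=X_0$. Since $\tilde{\Phi}^{X_0}_{-T_{\alpha_0}'}$ is a diffeomorphism of $S^*\ml{M}$ preserving $\Sigma_u$, the image $\tilde{\Phi}^{X_0}_{-T_{\alpha_0}'}(\Sigma_u)=\Sigma_u$ sits at positive distance from the closed set $C^{ss}(\alpha_0)\cap S^*\ml{M}$. By the very definition of $C^u(\alpha_1)$, the compact sets $C^u(\alpha_1)\cap S^*\ml{M}$ decrease in Hausdorff distance to $\Sigma_u$ as $\alpha_1\to 0^+$, so the continuity of $\tilde{\Phi}^{X_0}_{-T_{\alpha_0}'}$ allows me to pick $\alpha_1\in(0,\alpha_0)$ small enough that $\tilde{\Phi}^{X_0}_{-T_{\alpha_0}'}(C^u(\alpha_1)\cap S^*\ml{M})$ remains disjoint from $C^{ss}(\alpha_0)$. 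Unwinding the definition of $\ml{O}^u(X_0)$, this rewrites as $C^u(\alpha_1)\cap S^*\ml{M}\subset\ml{O}^u(X_0)$. The same argument applied to $\Sigma_s$, $C^u(\alpha_0)$, and the forward-time flow $\tilde{\Phi}^{X_0}_{T_{\alpha_0}'}$ yields $C^{ss}(\alpha_1)\cap S^*\ml{M}\subset\ml{O}^{ss}(X_0)$ after possibly shrinking $\alpha_1$ once more.

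To extend the inclusions to nearby $X$, I exploit that the cones $C^u(\alpha_1)$ and $C^{ss}(\alpha_0)$ depend on $X_0$ only and are independent of $X$, whereas $(X,y)\mapsto \tilde{\Phi}^X_{\pm T_{\alpha_0}'}(y)$ is smooth on $\mc{A}\times S^*\ml{M}$ (smooth dependence of the flow on the vector field at the finite time $T_{\alpha_0}'$, itself fixed in terms of $X_0$ and $\alpha_0$ alone). Compactness of $C^u(\alpha_1)\cap S^*\ml{M}$ together with uniform continuity then gives: for every $\varepsilon>0$ there is a $C^k$-neighborhood of $X_0$ on which the image $\tilde{\Phi}^X_{-T_{\alpha_0}'}(C^u(\alpha_1)\cap S^*\ml{M})$ stays within the $\varepsilon$-tube around $\tilde{\Phi}^{X_0}_{-T_{\alpha_0}'}(C^u(\alpha_1)\cap S^*\ml{M})$. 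Choosing $\varepsilon$ smaller than the distance between the latter and $C^{ss}(\alpha_0)$ propagates the first inclusion to all $X$ in this neighborhood, which I set to be $\mc{U}_{\alpha_0}(X_0)$; the symmetric argument handles the second. There is no serious obstacle here---this is a routine compactness/continuity bootstrap of the structural stability already contained in Lemma~\ref{l:time-uniform}---the only thing to watch is the order in which $T_{\alpha_0}'$, $\alpha_1$, and $\mc{U}_{\alpha_0}(X_0)$ are chosen, so that each parameter is fixed independently of those selected later.
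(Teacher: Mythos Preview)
Your proof is correct. It differs from the paper's argument in an interesting way, and both are worth understanding.

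The paper exploits the explicit exponential contraction rate $\gamma$ of the adapted norm $\|\cdot\|'$ on the cones: from the contraction of $C^u(\cdot)$ under $\Phi^{X_0}_t$ one extracts a quantitative choice $\alpha_1=\alpha_0 e^{-\gamma T_{\alpha_0}'}/2$, and then perturbs to nearby $X$. Your argument instead bypasses any rate computation: you use only that $\Sigma_u=(E_u^*\oplus E_0^*)\cap S^*\ml{M}$ is $\tilde\Phi^{X_0}_t$--invariant and disjoint from the closed set $C^{ss}(\alpha_0)\cap S^*\ml{M}$, together with the Hausdorff convergence $C^u(\alpha_1)\cap S^*\ml{M}\searrow\Sigma_u$ and the uniform continuity of the fixed-time diffeomorphism $\tilde\Phi^{X_0}_{-T_{\alpha_0}'}$. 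The extension to nearby $X$ is then the same continuity step in both approaches. What your route buys is robustness: it never needs two-sided bounds on the adapted norm (the integral definition of $\|\cdot\|'$ gives only one-sided inequalities, so producing a clean quantitative inclusion for the \emph{backward} image of $C^u(\alpha_1)$ is more delicate than it first appears). What the paper's route buys is an explicit $\alpha_1$ in terms of $\alpha_0$, $\gamma$, and $T_{\alpha_0}'$, which can be convenient when tracking constants later. Your closing remark about the order of choices ($T_{\alpha_0}'$ first, then $\alpha_1$, then $\ml U_{\alpha_0}(X_0)$) is exactly the right bookkeeping point.
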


\begin{proof} First of all, we note that by construction of the cones $C^{u}(\alpha)$ (with $\lambda_0>0$ as in Section \ref{Sec:invneigh})
$$\Phi^{X_0}_{T_{\alpha_0}'}\left(C^{u}\left(\alpha_0e^{-\lambda_0 T_{\alpha_0}'}/2\right)\right)\subset C^u(\alpha_0/2).$$
Hence, setting $\alpha_1=\alpha_0e^{-\lambda_0 T_{\alpha_0}'}/2$, one has $\Phi^{X_0}_{T_{\alpha_0}'}\left(C^{u}\left(\alpha_1\right)\right)\subset \overline{C^u(\alpha_0/2)}\subset C^u(\alpha_0).$ By continuity of the maps with respect to $X$ and, up to the fact that we may have to shrink the above neighborhood $\ml{U}_{\alpha_0}(X_0)$ a little bit, one can verify that, for 
every $X\in\ml{U}_{\alpha_0}(X_0)$,
$$C^{u}\left(\alpha_1\right)\subset \Phi^{X}_{-T_{\alpha_0}'}\left(C^u(\alpha_0)\right)\subset \Phi^{X}_{-T_{\alpha_0}'}\left(T^*\ml{M}\backslash C^{ss}(\alpha_0)\right),$$
which concludes the proof by definition of $\mathcal{O}^u(X)$.
\end{proof}

\begin{rem}\label{r:intertwine-time}
 In all the construction so far, we could have defined the cones $C^{uu}(\alpha)$ and $C^s(\alpha)$ (see paragraph~\ref{Sec:invneigh}) and a decaying order function 
 $\tilde{m}_X(x,\xi)$ which is close to $0$ on $C^{s}(\alpha)$ and close to $1$ on $C^{uu}(\alpha)$.
\end{rem}

\subsection{Definition of the escape function}\label{escapefct}

We start with the construction of the function $f(x,\xi)\in \ml{C}^{\infty}(T^*M,\IR_+)$. For $\|\xi\|_x\geq1$, it will be $1$-homogeneous and 
equal to $\|\xi\|_x$ outside the cones $C^{uu}(\tilde{\alpha}_0)$ and $C^{ss}(\tilde{\alpha}_0)$ for $\tilde{\alpha}_0>0$ small enough (to be determined). Following the proof 
of~\cite[Lemma~C.1]{DyZw13} (see also~\cite[Lemma~2.2]{BoWe17}), we set, for $(x,\xi)$ near $C^{ss}(\tilde{\alpha}_0/2)$ and $\|\xi\|_x\geq 1$,
\[f(x,\xi):=\exp\left(\frac{1}{T_1}\int_{0}^{T_1}\ln\|(d\varphi^{X_0}_{t}(x)^T)^{-1}\xi\|_{\varphi_{X_0}^t(x)}dt\right).\]
Recall that, for every $\xi$ in $E_s^*(X_0,x)$, one has $\|(d\varphi^{X_0}_{t}(x)^T)^{-1}\xi\|\leq Ce^{-\beta t}\|\xi\|$ for every $t\geq 0$ (where $C,\beta$ 
are some uniform constants). Hence, if we set $T_1=2\frac{\ln C}{\beta}$, we find that, for every $(x,\xi)\in E_s^*(X_0)$ with $\|\xi\|_x\geq 1$, 
$X_{H_0}f(x,\xi)\leq - f(x,\xi)\frac{\beta}{2}.$ Similarly, picking $T_1$ large enough, we set, for $(x,\xi)$ near $C^{uu}(\tilde{\alpha}_0/2)$ and $\|\xi\|_x\geq 1$,
$$f(x,\xi):=\exp\left(\frac{1}{T_1}\int_{0}^{T_1}\ln\|(d\varphi^{X_0}_{t}(x)^T)^{-1}\xi\|_{\varphi_{X_0}^t(x)}dt\right),$$
and we find that $X_{H_0}f(x,\xi)\geq  f(x,\xi)\frac{\beta}{2}$ on $E_u^*(X_0)$. By continuity, we find that there exists some (small enough) $\tilde{\alpha}_0>0$ 
such that, for every $\|\xi\|_x\geq 1$,
\begin{equation}\label{e:decay-stable-X0}
 (x,\xi)\in C^{ss}(\tilde{\alpha}_0/2)\Rightarrow X_{H_0}f(x,\xi)\leq - f(x,\xi)\frac{\beta}{3},
\end{equation}
and
\begin{equation}\label{e:decay-unstable-X0}
 (x,\xi)\in C^{uu}(\tilde{\alpha}_0/2)\Rightarrow X_{H_0}f(x,\xi)\geq f(x,\xi)\frac{\beta}{3}.
\end{equation}
As the function $f(x,\xi)$ is $1$-homogeneous, we can find a neighborhood $\ml{U}(X_0)$ of $X_0$ in the $\ml{C}^{\infty}$-topology such that, for every 
$X$ in $\ml{U}(X_0)$ and for every $\|\xi\|_x\geq 1$,
\begin{equation}\label{e:decay-stable-X}
 (x,\xi)\in C^{ss}(\tilde{\alpha}_0/2)\Rightarrow X_{H}f(x,\xi)\leq - f(x,\xi)\frac{\beta}{4},
\end{equation}
and
\begin{equation}\label{e:decay-unstable-X}
 (x,\xi)\in C^{uu}(\tilde{\alpha}_0/2)\Rightarrow X_{H}f(x,\xi)\geq f(x,\xi)\frac{\beta}{4}.
\end{equation}
Finally, we note that there exists some uniform constant $C>0$ such that, for every $X$ in $\ml{U}(X_0)$ and for $\|\xi\|_x\geq 1$, 
\begin{equation}\label{e:decay-norm-far}
 -Cf(x,\xi)\leq X_Hf(x,\xi)\leq C f(x,\xi)
\end{equation}
We are now ready to construct our family of escape functions $G_X^{N_0,N_1}(x,\xi)$:
\[G_X^{N_0,N_1}(x,\xi):=m_{X}^{N_0,N_1}(x,\xi)\ln(1+f(x,\xi)),\]
with $m_{X}^{N_0,N_1}\in\ml{C}^{\infty}(T^*M,[-2N_0,2N_1])$ which is $0$-homogeneous for $\|\xi\|_x\geq 1$. In order to construct this 
function, we will make use of the order functions defined in paragraph~\ref{ss:order-function} as in~\cite[p.~337-8]{FaSj}. Before doing that, let 
us observe that
\begin{equation}\label{e:derivative-escape-function}
 X_HG_X^{N_0,N_1}(x,\xi)=X_H(m_{X}^{N_0,N_1})(x,\xi)\ln(1+f(x,\xi))+m_{X}^{N_0,N_1}(x,\xi)\frac{X_Hf(x,\xi)}{1+f(x,\xi)}.
\end{equation}
We now fix a small enough neighborhood $\ml{U}(X_0)$ of $X_0$ so that $f$ enjoys  \eqref{e:decay-stable-X} and~\eqref{e:decay-unstable-X} for all $X$ in $\ml{U}(X_0)$ and so that we can apply the results of 
paragraph~\ref{ss:order-function}. Following~\cite{FaSj}, we set, for $\|\xi\|_x\geq 1$,
\begin{equation}\label{e:order-function}m_X^{N_0,N_1}(x,\xi):=
 N_1\left(2-m_X\left(x,\frac{\xi}{\|\xi\|_x}\right)-\tilde{m}_X\left(x,\frac{\xi}{\|\xi\|_x}\right)\right)-2N_0\tilde{m}_X\left(x,\frac{\xi}{\|\xi\|_x}\right),
\end{equation}
where we used the conventions of paragraph~\ref{ss:order-function} and Remark~\ref{r:intertwine-time}. First, notice that, by construction, $X_H(m_{X}^{N_0,N_1})\leq 0$ for $\|\xi\|_x\geq 1$. 
Recall that the order functions 
$m_X$ and $\tilde{m}_X$ depends on the parameters $\alpha_0>0$ and $\delta>0$ and that they depend smoothly on $X$. 
Now, we fix $0<\delta<\frac{1}{2}\min\{1,\min\{N_0,N_1\}/(N_0+N_1)\}$, $0<16N_0<N_1$ and 
$0<\alpha_0<\tilde{\alpha}_0/2$. We then find that 
$m_X^{N_0,N_1}(x,\xi/\|\xi\|_x)\geq N_1$ on $\ml{O}^{ss}(X)$ and $m_X^{N_0,N_1}(x,\xi/\|\xi\|_x)\leq -N_0$ on $\ml{O}^{uu}(X)$. 
We also have that $m_X^{N_0,N_1}(x,\xi/\|\xi\|_x)\geq\frac{N_1}{4}-2N_0\geq  N_1/8$ for 
$(x,\xi)$ outside $C^{uu}(\alpha_0)$ (as $N_1>16N_0$). We now fix $\alpha_1$ to be the aperture of the cone appearing in Lemma~\ref{l:inclusion-neighborhoods}. This allows to verify the first three requirements of $m_X^{N_0,N_1}$.

\begin{rem}\label{r:alternative-order-function} We could also have defined
$$\tilde{m}_X^{N_0,N_1}(x,\xi):=
 N_1\left(1-m_X\left(x,\frac{\xi}{\|\xi\|_x}\right)\right)-N_0\tilde{m}_X\left(x,\frac{\xi}{\|\xi\|_x}\right).$$
 We still have $\tilde{m}_X^{N_0,N_1}(x,\xi)\geq N_1$ on $\ml{O}^{ss}(X)$, $\tilde{m}_X^{N_0,N_1}(x,\xi)\leq\frac{N_1}{4}-N_0$ outside $C^{ss}(\alpha_0)$.
\end{rem}

Finally, combining $X_H(m_{X}^{N_0,N_1})\leq 0$ with~\eqref{e:derivative-escape-function} for $||\xi||\geq 1$, we immediately get the upper 
bound~\eqref{e:bound-derivative-escape-function}. It now remains to verify the decay property~\eqref{e:decay-escape-function}. For that purpose, we shall use the conventions of paragraph~\ref{ss:order-function} and set, for every $X\in\ml{U}(X_0)$, $$\tilde{\ml{O}}^{uu}(X)=\ml{O}^{uu}(X)\cap\ml{O}^u(X),\ \tilde{\ml{O}}^{0}(X)=\ml{O}^{s}(X)\cap\ml{O}^u(X),\ \text{and}\ \tilde{\ml{O}}^{ss}(X)=\ml{O}^{ss}(X)\cap\ml{O}^s(X),$$
which contains respectively $C^{uu}(\alpha_1)$, $C^u(\alpha_1)\cap C^s(\alpha_1)$ and $C^{ss}(\alpha_1)$ for $\alpha_1>0$ small enough (see Lemma~\ref{l:inclusion-neighborhoods}). Note also that $\tilde{\ml{O}}^{0}(X)$ is contained inside $C^u(\alpha_0)\cap C^s(\alpha_0)$ which is a small vicinity of $E_0^*(X_0)$. Based on~\eqref{e:derivative-escape-function}, we can now establish~\eqref{e:decay-escape-function} except in this small cone around the flow 
direction. Outside $\tilde{\ml{O}}^{uu}(X)\cup \tilde{\ml{O}}^{0}(X)\cup \tilde{\ml{O}}^{ss}(X)$, it follows from~\eqref{e:decay-order-transit} 
and~\eqref{e:derivative-escape-function}. Inside $\tilde{\ml{O}}^{uu}(X)$ and~$\tilde{\ml{O}}^{ss}(X)$, it follows 
from~\eqref{e:decay-stable-X},~\eqref{e:decay-unstable-X} and~\eqref{e:derivative-escape-function}.

\section{Selberg zeta function on trace-free symmetric tensors}
\label{a:selberg}

\begin{prop}\label{selbergtensors}
Let $n$ be even and $M=\Gamma\backslash \hh^{n+1}$ be a compact hyperbolic manifold. Let $\rho:\pi_1(M)\to U(V_\rho)$ be a finite dimensional unitary representation and  
let $\sigma_m$ be the irreducible unitary representation of ${\rm SO}(n)$ into the space $S_0^m\rr^n$ of trace-free symmetric tensors of order $m\geq 1$ on $\rr^n$. Then the Selberg zeta function $Z_{S,\sigma_m}(s)$ on $M$ associated to $\sigma_p$ and $\rho$ is holomorphic and the order of its zeros are given by 
\[{\rm ord}_{s_0}Z_{S,\sigma_m}(s) =\left\{\begin{array}{ll}
\dim\ker (\nabla^*\nabla-n^2/4-m+(s_0-n/2)^2)\cap \ker D^* & \textrm{ if }s_0\not=n/2\\
2\dim\ker (\nabla^*\nabla-n^2/4-m)\cap \ker D^* & \textrm{ if }s_0=n/2
\end{array}\right.\]
where $\nabla$ is the twisted Levi-Civita covariant derivative on $S_0^mT^*M\otimes E$, $E\to M$ being the flat bundle over $M$ obtained from 
the representation $\rho$, and $D^*=-{\rm Tr}\circ \nabla$ is the divergence operator. 
\end{prop}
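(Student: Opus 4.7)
The plan is to deduce the proposition from the general theory of Selberg zeta functions on rank-one locally symmetric spaces developed by Bunke-Olbrich~\cite{BuOl}, combined with a Weitzenb\"ock identification of the associated Casimir operator with the rough Laplacian on trace-free divergence-free symmetric tensors.

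As a first step, applying Bunke-Olbrich's Theorem~3.15 with $G=\operatorname{SO}(n+1,1)$, $K=\operatorname{SO}(n+1)$, $M'=\operatorname{SO}(n)$, and the irreducible unitary representation $\sigma_m$ of $M'$, one obtains the meromorphic extension of $Z_{S,\sigma_m}(s)$ to $\cc$. Since $n$ is even and both $\sigma_m$ and $\rho$ are unitary, the identity-orbital Plancherel integrand (cf.\ \cite[Ch.~3]{BuOl}) contributes no zeros or poles in the critical strip $\{0\leq \operatorname{Re}(s)\leq n\}$, so $Z_{S,\sigma_m}$ is entire. By a direct generalization of Bunke-Olbrich's Proposition~5.6 (stated for $\sigma_1^{\pm}$) to arbitrary irreducible $\sigma_m$, the order of the zero of $Z_{S,\sigma_m}$ at $s_0$ equals the multiplicity of the corresponding spectral parameter as an eigenvalue of the Casimir operator $C_G$ acting on $L^2$-sections of the homogeneous bundle $V_{\sigma_m}\otimes E$ over $\Gamma\backslash G/M'$, with a doubling correction at the fixed point $s_0=n/2$ of the functional equation $s\mapsto n-s$.

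The second step translates this spectral datum on the unit tangent bundle into one on $M$ itself. Using the $\operatorname{SO}(n)$-branching rule $S_0^m\rr^{n+1}\big|_{\operatorname{SO}(n)}=\sigma_m\oplus\sigma_{m-1}\oplus\cdots\oplus\sigma_0$ and Frobenius reciprocity, smooth sections of $V_{\sigma_m}\otimes E$ over $\Gamma\backslash G/M'$ identify with the subspace of $C^\infty(M;S_0^m T^*M\otimes E)$ annihilated by the divergence $D^*$: indeed $D^*$ intertwines the top $\sigma_m$ summand with $\sigma_{m-1}$, so $\ker D^*$ isolates the highest-weight component. A Weitzenb\"ock identity on the constant curvature $-1$ space $M$ (as recalled in~\cite[Eq.~(2.4)]{DFG} for $m=2$, and extended to general $m$ via the standard computation using $R_{ijkl}=-(g_{ik}g_{jl}-g_{il}g_{jk})$ together with the Casimir eigenvalue of $\sigma_m$) then identifies $C_G$ restricted to this subspace with $\nabla^*\nabla - n^2/4 - m$ up to the appropriate shift. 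Substituting the parametrization $s_0(n-s_0)=n^2/4-(s_0-n/2)^2$ converts the eigenvalue equation for $C_G$ into $(\nabla^*\nabla-n^2/4-m+(s_0-n/2)^2)u=0$ on $\ker D^*$, yielding the stated formula; the factor $2$ at $s_0=n/2$ arises because the functional equation pairs generic resonances at $s_0$ with those at $n-s_0$, while at the fixed point both contributions are absorbed at a single value.

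The main technical obstacle is the Weitzenb\"ock computation tracking the exact constant $n^2/4+m$ in the identification of $C_G|_{\ker D^*}$ with $\nabla^*\nabla$: this requires combining the unit-curvature Bochner identity for general-order symmetric tensors with the precise Casimir eigenvalue of $\sigma_m$ under the Harish-Chandra normalization, and verifying that the curvature correction on $S_0^m T^*M$ reduces to the scalar $-m$ in constant curvature. A secondary, more routine verification is that the identity-orbital Plancherel contributions of Bunke-Olbrich produce no extra zeros in the critical strip, which follows from explicit Plancherel density computations in~\cite[Ch.~5]{BuOl}.
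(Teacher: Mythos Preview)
Your proposal has a genuine gap in its use of the Bunke--Olbrich framework. Theorem~3.15 of \cite{BuOl} does not relate the zeros of $Z_{S,\sigma_m}$ to the Casimir acting on sections of the bundle $V_{\sigma_m}$ over $\Gamma\backslash G/M'$ (the unit tangent bundle); rather, it requires one to express the $M'$-representation $\sigma_m$ as the restriction of a $\zz^2$-graded virtual $K$-representation $\gamma=\gamma^+\ominus\gamma^-$, and the relevant operator $A_{\sigma_m}^2=-\Omega-c(\sigma_m)$ acts on the associated graded bundle over $\Gamma\backslash G/K=M$. The order of the zero is then a \emph{super}-dimension. Your subsequent Frobenius reciprocity step is also incorrect as stated: sections of $V_{\sigma_m}$ over $SM$ restrict on each fiber $S_xM\simeq K/M'$ to an infinite-dimensional space (by Peter--Weyl, every $K$-type $\Sigma_p$ with $p\geq m$ appears), so they cannot be identified with sections of the finite-rank bundle $\ker D^*\subset S_0^mT^*M\otimes E$ over $M$.

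The paper's argument supplies the two ingredients you are missing. First, the branching identity $\sigma_m=(\Sigma_m-\Sigma_{m-1})|_{{\rm SO}(n)}$ gives the graded bundle $(S_0^mT^*M\oplus S_0^{m-1}T^*M)\otimes E$ over $M$, on which $A_{\sigma_m}^2=(\Delta_m-c(\sigma_m))\oplus(\Delta_{m-1}-c(\sigma_m))$ with $\Delta_j=\nabla^*\nabla-j(j+n-1)$ the Lichnerowicz Laplacian and $c(\sigma_m)=n^2/4-m(m+n-2)$. Second, and this is the key lemma, the symmetrized covariant derivative $D=\mc{S}\nabla$ (adjoint of $D^*$) satisfies $\Delta_mD=D\Delta_{m-1}$ and is injective on compact hyperbolic manifolds by \cite{HMS}; hence $D:\ker(\Delta_{m-1}-c(\sigma_m)-s)\to \ker(\Delta_m-c(\sigma_m)-s)\cap(\ker D^*)^\perp$ is an isomorphism, and the super-dimension collapses to $\dim\big(\ker(\Delta_m-c(\sigma_m)-s)\cap\ker D^*\big)$. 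Unwinding $\Delta_m-c(\sigma_m)=\nabla^*\nabla-n^2/4-m$ then gives the stated formula. Your Weitzenb\"ock constant $n^2/4+m$ is correct, but it arises here from this cancellation rather than from any identification over $SM$.
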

\begin{proof}
We follow \cite[Theorem 3.15]{BuOl}. First we need to view $\sigma_m$ as the restriction of a sum of irreducibles representations of ${\rm SO}(n+1)$ as in Section 1.1.2 \cite{BuOl}: it is not difficult to check that 
\[ \sigma_m=(\Sigma_m-\Sigma_{m-1})|_{{\rm SO}(n)}\]
where $\Sigma_m$ denotes the irreducible unitary representation of ${\rm SO}(n+1)$ into the space $S_0^m\rr^{n+1}$. By Section 1.1.3 of \cite{BuOl}, there is a $\zz^2$-graded homogeneous vector bundle $V_{\sigma_m}=V_{\Sigma_m}^+\oplus V_{\Sigma_m}^-$ over $\hh^{n+1}$ with 
$V_{\Sigma_m}^+=S_0^{m}\rr^{n+1}$ and $V_{\Sigma_m}^-=S_0^{m-1}\rr^{n+1}$,
and we define the bundle $V_{M,\rho\otimes \sigma_m}=\Gamma\backslash (V_\rho\otimes V_{\sigma_m})$ over $M$. Denoting $E\to M$ the bundle over $M$ obtained from $V_\rho$ by quotienting by $\Gamma$ and $S^m_0T^*M$ the bundle of trace-free symmetric tensors of order $m$ on $M$, the bundle $V_{M,\rho\otimes \sigma_m}$ is isomorphic to the 
bundle $\mc{E}:=(S^m_0T^*M\oplus S^{m-1}_0T^*M)\otimes E$. There is a differential operator 
$A^2_{\sigma_m}$ on $\mc{E}$ constructed from the Casimir operator that has eigenvalues in correspondence with the zeros/poles of $Z_{S,\sigma_m}(s)$, it is given $A^2_{\sigma_m}=-\Omega-c(\sigma_m)$ where $\Omega$ is the Casimir operator and 
$c(\sigma)=n^2/4-|\mu(\sigma_m)|^2-2\mu(\sigma).\rho_{\rm so(n)}$ with
$\mu(\sigma_m)$ the highest weight of $\sigma$ and $\rho_{\rm so(n)}=(\ndemi-1,\ndemi-2,\dots,0)$. Here we have $\mu(\sigma_m)=(m,0,\dots,0)$ thus 
\[c(\sigma_m)=\frac{n^2}{4}-m(m+n-2).\]
We then obtain the formula 
\[ A^2_{\sigma_m}=(\Delta_m-c(\sigma_m))\oplus( \Delta_{m-1}-c(\sigma_m)) \]
where $\Delta_m=\nabla^*\nabla-m(m+n-1)$ is the Lichnerowicz Laplacian on (twisted) trace-free symmetric tensors of order $m$ on $M$ (see for instance \cite[Section 5]{Ha}). Now we have by \cite[Lemma 5.2]{Ha} that $D^*\Delta_m=\Delta_{m-1}D^*$ if $D^*$ is the divergence operator 
defined by $D^*u=-{\Tr}(\nabla u)$, and whose adjoint is $D=\mc{S}\nabla$ is the symmetrised 
covariant derivative. This gives $\Delta_mD=D\Delta_{m-1}$, but since $D$ is elliptic with no kernel by \cite[Proposition 6.6]{HMS}, it has closed range and $D$ gives an isomorphism  
\[ D: \ker (\Delta_{m-1}-c(\sigma_m)-s)\to \ker (\Delta_{m}-c(\sigma_m)-s)\cap (\ker D^*)^\perp\]
for each $s\in\rr$. In particular, one obtains that for each $s\in\rr$
\[ \dim \ker (\Delta_{m}-c(\sigma_m)-s)-\dim \ker (\Delta_{m-1}-c(\sigma_m)-s)=\dim( \ker (\Delta_{m}-c(\sigma_m)-s)\cap \ker D^*).\] 
Now by \cite[Theorem 3.15]{BuOl}, the function $Z_{S,\sigma_m}(s)$ has a zero at $s$ of order 
\[\begin{gathered} 
2 \dim(\dim( \ker (\Delta_{m}-c(\sigma_m)\cap \ker D^*)) \textrm{ if }s=\ndemi\\
\dim(\dim( \ker (\Delta_{m}-c(\sigma_m)\cap \ker D^*)) \textrm{ if }s\not=\ndemi.
\end{gathered}\]
\end{proof}

\end{document}